\theoremstyle{plain} %style for theorems
\newtheorem{thm}{Theorem}[section] %call Theorem using thm
\newtheorem{lemma}[thm]{Lemma} %call Lemma using lemma
\newtheorem{prop}[thm]{Proposition} %call Proposition using prop
\newtheorem{cor}[thm]{Corollary} %call Corollary using cor
\theoremstyle{definition} %style for definitions and examples
\newtheorem{defn}[thm]{Definition} %call Definition using defn
\newtheorem{notation}[thm]{Notation}
\theoremstyle{remark} %style for remarks - might not use?
\newtheorem*{rem}{Remark} %call Remark using rem, remarks aren't numbered
\newtheorem{obs}[thm]{Observation} %call Observation using obs, numbered with theorems and definitions
\DeclareMathOperator{\aut}{Aut} %aut
\DeclareMathOperator{\out}{Out} %out
\DeclareMathOperator{\outs}{\out_{\mathfrak{S}}} %out_S
\DeclareMathOperator{\auts}{\aut_{\mathfrak{S}}} %aut_S
\DeclareMathOperator{\inn}{Inn} %inn
\DeclareMathOperator{\stab}{Stab} %stab
\DeclareMathOperator{\dash }{\textbf{\textemdash}} % dash for math mode instead of $---$
\DeclareMathSymbol{\shortminus}{\mathbin}{AMSa}{"39} % minus sign for diagrams
\newcommand{\coset}{{\cdot}} % fix spacing around cdot in cosets
\newcommand{\factorautos}{\displaystyle\prod_{i=1}^{n}\aut(G_{i})} % Phi = product of aut G_i 's = set of factor autos
\newcommand{\spoke}[2]{\mathrm{Sp}_{#1}(#2)} % spoke graph associated to map
\newcommand{\vol}[2]{|#2|_{#1}} % volume of map
\newcommand{\tree}[1]{\widetilde{#1}} % Bass--Serre tree of GoG
\newcommand{\class}[1]{[#1]} % equivalence class of labelling (i.e. vertex in C_n)
\newcommand{\funalpha}{\underline{\alpha}} % alpha-graph in fundamental domain
\newcommand{\funA}[1]{\underline{A^{#1}}} % A_i-graph in fundamental domain
\tikzset{->-/.style={decoration={markings, mark=at position .55 with {\arrow{Classical TikZ Rightarrow}}}, postaction={decorate}}} %makes ->- a type of arrow in tikz diagrams
\tikzset{-<-/.style={decoration={markings, mark=at position .55 with {\arrow{Classical TikZ Leftarrow}}}, postaction={decorate}}} %makes -<- a type of arrow in tikz diagrams
\title{Generators for the Pure Symmetric Outer Automorphisms of a Free Product}
\author{Harry M J Iveson}
\begin{document}

\maketitle

%	--------------------------------------------------------------------------------------	Introduction	----------------------------------------------------------------------------

\begin{abstract}
%\textcolor{blue}{(Write an abstract)}
We construct a `nice' subcomplex of the Outer Space for a free product in order to give a geometric proof that the pure symmetric outer automorphisms of a given splitting of a free product are generated by factor outer automorphisms and Whitehead outer automorphisms relative to the splittting.
\end{abstract}

\section*{Introduction}

%\textcolor{blue}{(Write an introduction)}

Fouxe-Rabinovitch \cite{F-R1940} in 1940 gave a presentation for the automorphism group of a free product of finitely many freely indecomposable factors.
In 1996, McCullough--Miller \cite{McCullough1996} considered the symmetric automorphisms of a free product relative to a splitting where the factors need not be indecomposable.
Note that if a splitting is freely indecomposable, then every automorphism is a symmetric automorphism, so McCullough--Miller \cite{McCullough1996} is in some sense a generalisation of Fouxe-Rabinovitch \cite{F-R1940}.
In this paper, we provide an alternative method for determining generators of the pure symmetric outer automorphism group (described in Subsection \ref{subsection pure symmetric autos}) of a given splitting of a free product.
We show that this group is generated by the subgroup comprising factor outer automorphisms (described in Subsection \ref{subsection factor autos}) relative to the splitting and Whitehead outer automorphisms (described in Subsection \ref{subsection whitehead autos}) relative to the splitting.

\begin{restatable*}[Main Theorem]{thm}{maintheorem}\label{main theorem}
Let $G=G_{1}\ast\dots\ast G_{n}$ be a group which splits as a free product of $n\ge3$ non-trivial factors, and let $\mathfrak{S}=(G_{1},\dots,G_{n})$.
Then any pure symmetric outer automorphism $\Psi\in\outs(G)$ of the splitting $G_{1}\ast\dots\ast G_{n}$ can be written as a product of factor outer automorphisms relative to $\mathfrak{S}$ and Whitehead outer automorphisms relative to $\mathfrak{S}$.
\end{restatable*}

We do this by showing that a particular subcomplex $\mathcal{S}_{n}$ (see Definition \ref{defn Sn}) of Guirardel--Levitt's \cite{Guirardel2007} Outer Space is path connected.

\begin{restatable*}{cor}{connected}\label{cor Sn path connected}
The subcomplex $\mathcal{S}_{n}$ is path-connected.
\end{restatable*}

Our key ingredients are Proposition \ref{prop alpha-A paths} and Proposition \ref{prop inductive step for generators}.
The former is a statement about our subcomplex $\mathcal{S}_{n}$ of Outer Space, detailing how one may find certain paths within the subcomplex.
The latter turns this into a statement about (outer) automorphisms, and how Whitehead and factor (outer) automorphisms are sufficient to traverse these paths, and hence the subcomplex.

The paper begins with some background defintions and preliminary results.
The varying types of automorphism we will encounter are introduced in Section \ref{section automorphisms}, although for the most part these will not be used until Section \ref{section generators}, where we prove our main theorem.
In order to discuss the `Outer Space'  for our free product splitting (Subsection \ref{subsection outer space}), we present some Bass--Serre theory regarding graphs of groups in Section \ref{section graphs of groups}.
In Section \ref{section Sn} we introduce our subcomplex $\mathcal{S}_{n}$ of Outer Space relative to our chosen splitting of our free product, and discuss some of its basic properties.
The key ideas in this paper are found in Section \ref{section connectedness}, where we use a reduction argument to show that our subcomplex $\mathcal{S}_{n}$ is path connnected.
Finally in Section \ref{section generators} we use this result to determine generators for the pure symmetric outer automorphism group of our free product splitting.

\setcounter{tocdepth}{1}
\tableofcontents

%	-------------------------------------------------------------------------------- 	Automorphisms	---------------------------------------------------------------------

\section{Automorphisms of $G_{1}\ast\dots\ast G_{n}$}\label{section automorphisms}

Below is a summary of definitions and notation regarding automorphisms
adapted from the notation of Gilbert \cite{Gilbert1987} and McCool \cite{McCool1986}.

Throughout, we consider a group $G$ which splits as a free product $G_{1}\ast\dots\ast G_{n}$, where each $G_{i}$ is non-trivial and $n\ge3$.
We refer to each $G_{i}$ as a \emph{factor group}.

\begin{notation}
Let $G$ be a group.
We denote by $\out(G)$ the outer automorphism group of $G$.
Note that $\out(G)=\faktor{\aut(G)}{\inn(G)}$, where
$\aut(G)$ is the group of automorphisms of $G$ and 
$\inn(G)$ is the normal subgroup of $\aut(G)$ comprising all inner automorphisms of $G$,
that is, automorphisms of the form $g\mapsto h^{-1}gh=:g^{h}$ for some fixed $h\in G$.
\end{notation}

We will usually write elements of $\out(G)$ as upper-case Greek letters and their $\aut(G)$ representatives as their lower-case counterparts, although if we are given $\psi\in\aut(G)$ we may sometimes write $[\psi]$ for its class in $\out(G)$.

\subsection{Pure Symmetric Automorphisms}\label{subsection pure symmetric autos}	%	------------------------ Pure Symmetric Automorphisms ----------------------------------

\begin{defn}\label{defn pure symmetric autos}
Let $G=G_{1}\ast\dots\ast G_{n}$ be a group which splits as a non-trivial free product, and denote $\mathfrak{S}:=(G_{1},\dots,G_{n})$.
\begin{itemize}
\item We say $\psi\in\aut(G)$ is a \emph{pure symmetric automorphism} of the splitting $G_{1}\ast\dots\ast G_{n}$ if for each $i$ there is some $g_{i}\in G$ such that $\psi(G_{i})=G_{i}^{g_{i}}=g_{i}^{-1}G_{i}g_{i}$. We denote the subgroup of $\aut(G)$ comprising these pure symmetric automorphisms by $\auts(G)$.
\item We say $\Psi\in\out(G)$ is a \emph{pure symmetric outer automorphism} of the splitting if there is some $\psi\in\Psi$ which is a pure symmetric automorphism of the splitting. We denote the subgroup of $\out(G)$ comprising these pure symmetric outer automorphisms by $\outs(G)$.
\item We call a free product $H_{1}\ast\dots\ast H_{n}$ an \emph{$\mathfrak{S}$-free splitting} for $G=G_{1}\ast\dots\ast G_{n}$ if for each $i\in\{1,\dots,n\}$, there exists $g_{i}\in G$ such that $H_{i}=G_{i}^{g_{i}}$ 
and $G_{1}^{g_{1}}\ast\dots\ast G_{n}^{g_{n}}=G$.
%(in particular, $H_{1}\ast\dots\ast H_{n}$ is a subgroup of $G$), and the subgroups $G_{1}^{g_{1}},\dots, G_{n}^{g_{n}}$ generate the group $G$ (i.e. $G\le G_{1}^{g_{1}}\ast\dots\ast G_{n}^{g_{n}}$).
%\edit{consider rewriting}
\end{itemize}
\end{defn}

\begin{obs}\label{obs Grushko decomposition}
If $G=G_{1}\ast\dots\ast G_{n}$ where the factor groups $G_{i}$ are non-trivial, not infinite cylic, and freely indecomposable, then $G_{1}\ast\dots\ast G_{n}$ is a Grushko decomposition for $G$, and by the Grushko Decomposition Theorem, the $G_{i}$'s are unique up to permutation of their conjugacy classes in $G$.
In particular, if the $G_{i}$'s are additionally pairwise non-isomorphic, we deduce that every automorphism of $G$ is a pure symmetric automorphism of the splitting.
That is, $\auts(G)=\aut(G)$ and $\outs(G)=\out(G)$ (where $\mathfrak{S}=(G_{1},\dots,G_{n})$).
\end{obs}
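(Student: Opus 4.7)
The plan is to reduce both parts of the observation to the uniqueness clause of the Grushko Decomposition Theorem. First I would verify that the given splitting is itself a Grushko decomposition of $G$: recall that a Grushko decomposition has the shape $F \ast H_{1} \ast \cdots \ast H_{k}$ with $F$ free and each $H_{i}$ freely indecomposable and not infinite cyclic. Since the hypotheses on the $G_{i}$ match the conditions on the $H_{i}$ exactly, one simply takes $F = 1$ and $H_{i} = G_{i}$. The uniqueness-up-to-conjugacy-and-permutation clause of Grushko's theorem then delivers the first assertion directly.

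For the assertion about automorphisms, I would take an arbitrary $\psi \in \aut(G)$ and apply it to the splitting to produce $G = \psi(G_{1}) \ast \cdots \ast \psi(G_{n})$. Since $\psi(G_{i}) \cong G_{i}$, this new splitting satisfies the same Grushko hypotheses, hence is also a Grushko decomposition of $G$. Invoking uniqueness supplies a permutation $\sigma \in S_{n}$ and elements $g_{i} \in G$ with $\psi(G_{i}) = G_{\sigma(i)}^{g_{i}}$. The key step is then to kill $\sigma$: the pairwise non-isomorphism hypothesis yields $G_{i} \cong \psi(G_{i}) = G_{\sigma(i)}^{g_{i}} \cong G_{\sigma(i)}$, forcing $\sigma(i) = i$. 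Consequently $\psi(G_{i}) = G_{i}^{g_{i}}$, so $\psi \in \auts(G)$, and passing to outer automorphism classes gives $\outs(G) = \out(G)$.

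The main substance of the argument lies entirely in how the pairwise non-isomorphism hypothesis trivialises the permutation otherwise permitted by Grushko's uniqueness; I do not expect any further genuine obstacle, since everything else is a direct appeal to the theorem.
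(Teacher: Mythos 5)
Your proposal is correct and follows essentially the same route the paper intends for this Observation: identify the splitting as a Grushko decomposition (with trivial free part), apply the uniqueness clause to get $\psi(G_{i})=G_{\sigma(i)}^{g_{i}}$, and use the pairwise non-isomorphism hypothesis to force $\sigma=\mathrm{id}$, whence $\psi\in\auts(G)$. Nothing further is needed.
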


\begin{rem}
Note that $\inn(G)\le \auts(G) \le \aut(G)$.
%Then by the Third Isomorphism Theorem, we have that $\faktor{\aut_{\mathfrak{S}}(G)}{\inn(G)}\cong\outs(G)$.
We then have that $\faktor{\aut_{\mathfrak{S}}(G)}{\inn(G)}=\outs(G)$ and
%Moreover, since $\auts(G)$ is a subgroup of $\aut(G)$ and $\inn(G)\subseteq \auts(G)$, if $\Psi\in\outs(G)$, then \textit{every} representative of $\Psi$ in $\aut(G)$ is in fact in $\auts(G)$.
moreover, if $\Psi\in\outs(G)$, then \textit{every} representative of $\Psi$ in $\aut(G)$ is in fact in $\auts(G)$.
%\edit{armando says this doesn't need isomorphism theorem}
\end{rem}

\begin{lemma}\label{lemma pure symmetric autos give S-free splittings}
Let $G=G_{1}\ast\dots\ast G_{n}$ and $\mathfrak{S}=(G_{1},\dots,G_{n})$.
Then $H_{1}\ast\dots\ast H_{n}$ is an $\mathfrak{S}$-free splitting for $G$ if and only if
there exists $\psi\in\auts(G)$ such that $H_{i}=\psi(G_{i})$ for each $i\in\{1,\dots,n\}$.
%$\psi(G_{1})\ast\dots\ast\psi(G_{n})$ is an $\mathfrak{S}$-free splitting and vice versa.
\end{lemma}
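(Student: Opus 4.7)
The plan is to prove each direction of the equivalence by using the universal property of free products together with the hypothesis of freeness in the appropriate product decomposition.

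For the forward direction, suppose $H_1 \ast \dots \ast H_n$ is an $\mathfrak{S}$-free splitting, so $H_i = G_i^{g_i}$ for some $g_i \in G$ and $G = G_1^{g_1} \ast \dots \ast G_n^{g_n}$. I would define $\psi$ by prescribing it on each factor: let $\psi|_{G_i}\colon G_i \to G$ be the map $g \mapsto g^{g_i}$, which is an injective homomorphism landing in $H_i$. Using the universal property of the free product $G = G_1 \ast \dots \ast G_n$, these assemble into a homomorphism $\psi\colon G \to G$. Surjectivity is immediate since the image contains each $H_i$ and $G = \langle H_1, \dots, H_n\rangle$. The key step is injectivity, and this is exactly where the assumption that $G = H_1 \ast \dots \ast H_n$ is a \emph{free} product is used: given $g \in G$ with reduced normal form $a_1 \cdots a_k$ with $a_j \in G_{i_j}\setminus\{1\}$ and $i_j \ne i_{j+1}$, the image $\psi(g) = a_1^{g_{i_1}} \cdots a_k^{g_{i_k}}$ is a reduced word in the free product $H_1 \ast \dots \ast H_n$, so $\psi(g) = 1$ forces $k = 0$. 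Thus $\psi \in \aut(G)$ and by construction $\psi(G_i) = H_i = G_i^{g_i}$, so $\psi \in \auts(G)$.

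For the backward direction, suppose $\psi \in \auts(G)$ with $H_i = \psi(G_i)$. By definition of $\auts(G)$, there exist $g_i \in G$ with $\psi(G_i) = G_i^{g_i}$, hence $H_i = G_i^{g_i}$, giving the conjugacy condition in the definition of an $\mathfrak{S}$-free splitting. It remains to verify that $G$ is the free product of the $H_i$. Since $\psi$ is surjective and $G = \langle G_1, \dots, G_n \rangle$, we have $G = \langle H_1, \dots, H_n \rangle$. For freeness, any non-trivial relation $h_1 \cdots h_k = 1$ with $h_j = \psi(a_j) \in H_{i_j}\setminus\{1\}$ and $i_j \ne i_{j+1}$ pulls back via injectivity of $\psi$ to a non-trivial relation $a_1 \cdots a_k = 1$ in the free product $G_1 \ast \dots \ast G_n$, contradicting $a_j \in G_{i_j}\setminus\{1\}$ and $i_j \ne i_{j+1}$. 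Hence $G = H_1 \ast \dots \ast H_n$.

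I expect neither direction to pose a real obstacle; both reduce to transporting the reduced-word structure of a free product across an isomorphism of factors. The only subtlety is remembering to invoke, in the forward direction, the hypothesis that $G_1^{g_1} \ast \dots \ast G_n^{g_n} = G$ is a genuine free product (not merely that the conjugates generate $G$) to secure injectivity of the homomorphism produced by the universal property.
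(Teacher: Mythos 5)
Your proposal is correct and follows essentially the same route as the paper: both directions hinge on the universal property of the free product, with the freeness of $H_{1}\ast\dots\ast H_{n}$ doing the real work in the forward direction. The only difference is cosmetic: the paper establishes bijectivity of $\psi$ by using the universal property a second time (on the splitting $G_{1}^{g_{1}}\ast\dots\ast G_{n}^{g_{n}}=G$) to build an explicit inverse $\varphi$, whereas you verify injectivity and surjectivity directly via reduced normal forms.
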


\begin{proof}
First, suppose $H_{1}\ast\dots\ast H_{n}$ is an $\mathfrak{S}$-free splitting for $G$.
Then there exist $g_{1},\dots,g_{n}\in G$ such that for each $i$, $H_{i}=G_{i}^{g_{i}}$.
We may now define a map $\psi$ by setting $\psi(g):=g^{g_{i}}$ for each $g\in G_{i}$ and each $i\in\{1,\dots,n\}$.
By the universal property for free products, $\psi$ extends to an endomorphism of $G_{1}\ast\dots\ast G_{n}=G$.
Similarly, the map $\varphi$ given by $\varphi(g):=g^{g_{i}^{-1}}$ for each $g\in G_{i}^{g_{i}}$ and each $i\in\{1,\dots,n\}$ extends to an endomorphism of $G_{1}^{g_{1}}\ast\dots\ast G_{n}^{g_{n}}=G$.
Noting that $\varphi$ is an inverse for $\psi$, we see that $\psi$ is bijective.
%Since $G=G_{1}\ast\dots\ast G_{n}$, $\psi$ extends to an endomorphism of $G$.
%Moreover, since $G=G_{1}^{g_{1}}\ast\dots\ast G_{n}^{g_{n}}=\psi(G_{1})\ast\dots\ast\psi(G_{n})=\psi(G_{1}\ast\dots\ast G_{n})=\psi(G)$, we have that $\psi$ is bijective.
%\textcolor{red}{Why is $\psi$ injective?}
Hence $\psi\in\aut(G)$ and thus $\psi\in\auts(G)$.
%
%\textcolor{Green}{
%\textbf{From Paper 1:}
%Since $H_{1}\ast\dots\ast H_{n}$ is an $\mathfrak{S}$-free splitting for $G$, for each $i$ there exists $g_{i}\in G$ so that $H_{i}=G_{i}^{g_{i}}$.
%Let $\psi_{i}:G_{i}\to H_{i}$ be the map $\psi(x)=g_{i}^{-1}xg_{i}$ $\forall x\in G_{i}$. Clearly, $\psi_{i}$ is an isomorphism of (sub)groups.
%By the universal property of free products, these isomorphisms $\psi_{i}$ extend to an endomorphism $\psi:G\to G$.
%Since $H_{1}\ast\dots\ast H_{n}$ is an $\mathfrak{S}$-free splitting of $G_{1}\ast\dots\ast G_{n}$, then $G$ is generated by the subgroups $H_{1},\dots,H_{n}$, and so $\psi$ is surjective.
%Repeating this process on the maps $\psi_{i}^{-1}:H_{i}\to G_{i}$, we recover a surjective homomorphism $\varphi:G\to G$, which composes with $\psi$ to give the identity map.
%Thus $\varphi$ is an inverse for $\psi$, and so $\psi\in\aut(G)$.
%Moreover, $\psi$ restricts to $\psi_{i}$ on each $G_{i}$, that is, $\psi(G_{i})=G_{i}^{g_{i}}=H_{i}$, and so $\psi\in\aut_{\mathfrak{S}}(G)$, as required.
%}

Now suppose $\psi\in\auts(G)$.
Then there exist $g_{1},\dots,g_{n}\in G$ such that for each $i$, $\psi(G_{i})=G_{i}^{g_{i}}$.
Observe that since $\psi$ is an automorphism of $G$ $\psi(G_{1})\ast\dots\ast\psi(G_{n})=\psi(G_{1}\ast\dots\ast G_{n})=G_{1}\ast\dots\ast G_{n}$.
% is surjective then $G\le \psi(G)=\psi(G_{1})\ast\dots\ast\psi(G_{n})$, that is, the subgroups $\psi(G_{1}),\dots,\psi(G_{n})$ generate $G$.
Hence $\psi(G_{1})\ast\dots\ast\psi(G_{n})$ is an $\mathfrak{S}$-free splitting for $G$.
\end{proof}

\subsection{Factor Automorphisms}\label{subsection factor autos}	%	------------------------ Factor Automorphisms ----------------------------------

\begin{defn}\label{defn factor autos}
Let $G=G_{1}\ast\dots\ast G_{n}$ and $\mathfrak{S}=(G_{1},\dots,G_{n})$.
We say $\varphi\in\auts(G)$ is a \emph{factor automorphism} relative to $\mathfrak{S}$ if for each $i\in\{1,\dots,n\}$, $\varphi|_{G_{i}}$ (i.e. $\varphi$ with domain restricted to the embedding of $G_{i}$ in $G$) is an automorphism of $G_{i}$, that is, $\varphi|_{G_{i}}\in\aut(G_{i})$.

We say $\Phi\in\outs(G)$ is a \emph{factor outer automorphism} relative to $\mathfrak{S}$ if $\Phi$ has a representative $\varphi\in\auts(G)$ which is a factor automorphism relative to $\mathfrak{S}$.

When $\mathfrak{S}$ is understood we will simply say $\varphi$ (or $\Phi$) is a factor (outer) automorphism.
\end{defn}

\begin{rem}
The set of factor automorphisms relative to $\mathfrak{S}$ forms a subgroup of $\auts(G)$ which is isomorphic to $\factorautos$,
via the map $\varphi\mapsto(\varphi|_{G_{1}},\dots,\varphi|_{G_{n}})$.
Note that the only factor automorphism relative to $\mathfrak{S}$ which is also an inner automorphism is the identity, so by the Second Isomorphism Theorem, we also have that the set of factor outer automorphisms relative to $\mathfrak{S}$ in $\outs(G)$ is isomorphic to $\factorautos$.
%\textcolor{red}{
%We will thus often conflate the set of factor automorphisms with the set of factor outer automorphisms, denoting both by $\factorautos$.
%}
\end{rem}

\subsection{Whitehead Automorphisms}\label{subsection whitehead autos}	%	------------------------ Whitehead Automorphisms ----------------------------------

\begin{defn}\label{defn whitehead autos}
Let $G=G_{1}\ast\dots\ast G_{n}$ and $\mathfrak{S}=(G_{1},\dots,G_{n})$, take $i\in\{1,\dots,n\}$, fix some $x\in G_{i}$, and let $Y\subseteq\{G_{1},\dots,G_{n}\}-\{G_{i}\}$.

We denote by $(Y,x)$ the automorphism in $\auts(G)$ which for each $j\in\{1,\dots,n\}$ maps $g\in G_{j}$ by
$g\mapsto \left\{ \begin{array}{ l l } x^{-1}gx & \text{if \ } G_{j}\in Y \\ g & \text{if \ } G_{j}\not\in Y \end{array} \right.$.
We call such an automorphism a \emph{Whitehead automorphism} relative to $\mathfrak{S}$,
and say that $G_{i}$ is its \emph{operating factor}.

An element $\Psi\in\outs(G)$ will be called a \emph{Whitehead outer automorphism} relative to $\mathfrak{S}$ if it has some representative $\psi\in\auts(G)$ which is a Whitehead automorphism relative to $\mathfrak{S}$.

When $\mathfrak{S}$ is understood we will simply say $\psi$ (or $\Psi$) is a Whitehead (outer) automorphism.
\end{defn}

\begin{rem}
Our notation differs from that of Gilbert \cite{Gilbert1987} in that we do not include the operating factor $G_{i}$ in the set $Y$.
\end{rem}

We write $g^{x}$ for the conjugation $x^{-1}gx$.
If $Y=\{G_{j}\}$ we will often write $(G_{j},x)$ for $(\{G_{j}\},x)$.

\begin{obs}\label{obs outer whitehead has unique whitehead rep}
Note that Whitehead automorphisms relative to $\mathfrak{S}$ leave their operating factor pointwise fixed.
If $\iota_{h}\in\aut(G)$ is the inner automorphism $g\mapsto h^{-1}gh=g^{h}$ and $(\{G_{1},\dots,G_{m}\},g_{0})$ is a Whitehead automorphism relative to $\mathfrak{S}$ with $g_{0}\in G_{i}$ (and $i>m$ for ease of notation), then $(\{G_{1},\dots,G_{m}\},g_{0})\iota_{g}$ cannot pointwise fix any factor $G_{j}$ unless either $g=1$ or $g=g_{0}^{-1}$, in which case $g\in G_{i}$ so $g\not\in G_{j}$. Hence there is no candidate for operating factor unless $g=1$, and so the only Whitehead automorphism relative to $\mathfrak{S}$ in the outer automorphism class $[(\{G_{1},\dots,G_{m}\},g_{0})]$ is $(\{G_{1},\dots,G_{m}\},g_{0})$ itself.
Thus if $\Psi\in\outs(G)$ is a Whitehead outer automorphism relative to $\mathfrak{S}$, then $\Psi$ has a unique representative $\psi\in\auts(G)$ which is a Whitehead automorphism relative to $\mathfrak{S}$.
\end{obs}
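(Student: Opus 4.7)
The plan: existence is immediate from the definition of Whitehead outer automorphism, so the content is uniqueness. Suppose $\psi = (Y,x)$ and $\psi' = (Y',x')$ are two Whitehead automorphisms representing the same outer class $\Psi$, with $x \in G_i$ and $x' \in G_{i'}$. Then $\psi' = \psi \iota_h$ for some $h \in G$, and the goal is to force $h = 1$, whence $(Y,x) = (Y',x')$ by comparing the resulting actions on each factor.

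First, I would exploit that $\psi'$ pointwise fixes its operating factor $G_{i'}$. Applying this to an arbitrary $k \in G_{i'}$ yields
\[
k \;=\; \psi'(k) \;=\; \psi(h^{-1} k h) \;=\; \psi(h)^{-1}\,\psi(k)\,\psi(h),
\]
and splitting into cases according to whether $G_{i'}$ equals $G_i$, lies in $Y$, or lies outside $Y\cup\{G_i\}$, this rearranges to the assertion that either $\psi(h)$ or $x\psi(h)$ centralises $G_{i'}$. I would then invoke Bass--Serre theory for the splitting $G = G_{1}\ast\dots\ast G_{n}$: because the edge stabilisers of the associated tree are trivial, each factor $G_j$ is self-normalising in $G$, so its centraliser coincides with its centre $Z(G_j) \subseteq G_j$. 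Combined with the explicit description of $\psi$ on each factor and the fact that distinct factors meet trivially, this pins $h$ down to the two candidates $h = 1$ and $h = x^{-1}$ flagged in the observation's narrative.

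A direct computation with $h = x^{-1}$ then shows that the resulting automorphism pointwise fixes each factor in $Y$ but conjugates the remaining non-$G_i$ factors by $x \in G_i$; since no factor in $Y$ contains $x$, this composite does not fit the Whitehead shape with any choice of operating factor, forcing $h = 1$ and hence $\psi = \psi'$. The main obstacle I anticipate is the degenerate case $x \in Z(G_i)$ (in particular $x = 1$), where $G_i$ remains pointwise fixed under $\psi\iota_{x^{-1}}$ and the composite actually is a Whitehead automorphism with operating factor $G_i$ and $Y$-set equal to the complement of $Y\cup\{G_i\}$; the argument must therefore either exclude this case by a genericity assumption on $x$ or interpret the uniqueness statement modulo such central ambiguity.
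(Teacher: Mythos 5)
Your argument follows the same route as the paper's: write the second Whitehead representative as $\psi\iota_{h}$, use the fact that a Whitehead automorphism must pointwise fix its operating factor to constrain $h$, and then rule out the surviving candidate $h=x^{-1}$ because the resulting map conjugates the factors outside $Y$ by an element of $G_{i}$ lying in no other factor. The paper compresses all of this into the assertion that the composite ``cannot pointwise fix any factor $G_{j}$ unless $g=1$ or $g=g_{0}^{-1}$''.

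The degenerate case you flag at the end is not a defect of your write-up: it is a genuine counterexample to the observation as stated, and the paper's own argument has exactly the corresponding gap. Pointwise fixing a factor $G_{j}$ only forces the relevant conjugator into $C_{G}(G_{j})=Z(G_{j})$, not into $\{1\}$, and the paper's dichotomy ``$g=1$ or $g=g_{0}^{-1}$'' silently assumes these centres are trivial. Concretely, if $x\in Z(G_{i})\setminus\{1\}$ then $(Y,x)\iota_{x^{-1}}$ pointwise fixes every factor in $Y\cup\{G_{i}\}$ and conjugates the remaining factors by $x^{-1}$, i.e.\ it equals the Whitehead automorphism $\bigl(\{G_{1},\dots,G_{n}\}\setminus(Y\cup\{G_{i}\}),\,x^{-1}\bigr)$ with operating factor $G_{i}$; this is distinct from $(Y,x)$ as an element of $\auts(G)$ (the two disagree on any factor outside $\{G_{i}\}$ on which exactly one of them acts by conjugation, and since $n\ge 3$ such a factor exists), so the outer class $[(Y,x)]$ contains two different Whitehead representatives and uniqueness fails. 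This does no harm downstream --- Proposition~\ref{prop stab funAi} only needs the existence of a Whitehead representative with the stated operating factor --- but the observation requires either a trivial-centre hypothesis on the factors or a weakened conclusion. One further small leak in your own outline: the centraliser computation does not by itself pin $h$ to the two candidates $1$ and $x^{-1}$ when $Z(G_{i'})\ne\{1\}$ for the new operating factor $G_{i'}$; it only yields $\psi(h)\in Z(G_{i'})$ or $x\psi(h)\in Z(G_{i'})$, and the additional candidates must be eliminated by examining the action of $\psi\iota_{h}$ on the remaining factors, where conjugation by a reduced word of length two cannot match the Whitehead shape.
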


We will therefore say that if $\psi\in\auts(G)$ is a Whitehead automorphism relative to $\mathfrak{S}$ with operating factor $G_{i}$, then $[\psi]\in\outs(G)$ has operating factor $G_{i}$.

%	-------------------------------------------------------------------------------- 	Graphs of Groups	---------------------------------------------------------------------

\section{Graphs of Groups for $G_{1}\ast\dots\ast G_{n}$}\label{section graphs of groups}

We now give a brief background in Bass--Serre theory with the aim of constructing an $\outs(G)$-invariant complex for the splitting $G=G_{1}\ast\dots\ast G_{n}$.
Vertices (0-cells) in this complex will be equivalence classes of certain graphs of groups.
We will require that all of our graphs of groups are trees, and have trivial edge groups.
To aid in our construction, we  define $(n,k)$-trees below, which will satisfy these conditions.
Note that our idea of a graph differs from that of Serre \cite{Serre1980}, in that we have a single unoriented edge between adjacent vertices, which may later be assigned an orientation.

\begin{defn}\label{defn (n,k)-tree}
\hspace{1cm}
\begin{itemize}
\item An \emph{$(n,k)$-tree} $T$ is a finite simplicial tree with vertex set $V(T)=\{v_{1},\dots v_{n}, u_{1}, \dots, u_{k}\}$ (with the possibility that $k=0$, i.e. $V(T)=\{v_{1},\dots,v_{n}\}$) and edge set $E(T)\subseteq V(T)\times V(T)$
such that each vertex $u_{j}$ has valency at least 3.
We consider $(w_{1},w_{2})$ and $(w_{2},w_{1})$ to be the same edge.
We will refer to the $u_{j}$'s as `trivial vertices'.
%\edit{Use Serre's way instead?}
\item An \emph{orientation} of $T$ is a pair of maps $o:E(T)\to V(T)$ and $t:E(T)\to V(T)$ such that for any edge $e$ of $T$ with endpoints $w_{1}$ and $w_{2}$, we have $\{o(e),t(e)\}=\{w_{1},w_{2}\}$ as sets.
\item An \emph{$(n,k)$-automorphism} of $T$ is a graph automorphism $\gamma$ of $T$ (i.e. a bijection $T\to T$ which sends vertices to vertices and edges to edges, and preserves adjacency)
which satisfies $\gamma(v_{i})\in\{v_{1},\dots,v_{n}\}$ and $\gamma(u_{j})\in\{u_{1},\dots,u_{k}\}$ for each $i\in\{1,\dots,n\}$ and each $j\in\{1,\dots,k\}$.
\end{itemize}
\end{defn}

\subsection{$\mathfrak{S}$-Labellings}	%	------------------------ S-Labellings ----------------------------------

Now that we have defined our underlying graphs, we can construct our graphs of groups by equipping our graphs with `$\mathfrak{S}$-labellings':

\begin{defn}\label{defn s labelling}
Let $G=G_{1}\ast\dots\ast G_{n}$ be a free product with $\mathfrak{S}=(G_{1},\dots,G_{n})$,
let $H_{1}\ast\dots\ast H_{n}$ be an $\mathfrak{S}$-free splitting for $G$,
and let $\sigma\in S_{n}$ be some permutation of $\{1,\dots,n\}$ (so $\{1,\dots,n\}=\{\sigma(1),\dots,\sigma(n)\}$ as sets).
Let $T$ be an $(n,k)$-tree and $\gamma$ an $(n,k)$-automorphism of $T$.

We define $(\gamma(T):H_{\sigma(1)},\dots,H_{\sigma(n)})$ to be the graph of groups whose edge groups are all trivial, each vertex $\gamma(u_{j})$ for $j\in\{1,\dots,k\}$ has trivial vertex group, and each vertex $\gamma(v_{i})$ for $i\in\{1,\dots,n\}$ has vertex group $H_{\sigma(i)}$.
We call $(\gamma(T):H_{\sigma(1)},\dots,H_{\sigma(n)})$ an \emph{$\mathfrak{S}$-labelling} of $\gamma(T)$.
\end{defn}

\begin{rem}
Since $\gamma$ is an automorphism of $T$, then as abstract graphs we have $\gamma(T)=T$.
Moreover, if $\eta\in S_{n}$ is such that for each $i\in\{1,\dots,n\}$, $\gamma(v_{i})=v_{\eta(i)}$, then $(T:H_{\sigma(\eta^{-1}(1))},\dots,H_{\sigma(\eta^{-1}(n))})$ and $(\gamma(T):H_{\sigma(1)},\dots,H_{\sigma(n)})$ describe identical constructions.
We will thus often suppress $\gamma$ and subsume $\eta$ into the permutation $\sigma$.
\end{rem}

\begin{defn}\label{defn equivalent labellings}
Let $\sigma,\tau\in S_{n}$ and let $T_{H}:=(T:H_{\sigma(1)},\dots,H_{\sigma(n)})$ and $T_{K}:=(T:K_{\tau(1)},\dots,K_{\tau(n)})$ be two $\mathfrak{S}$-labellings of an $(n,k)$-tree $T$.
For $i\in\{1,\dots,n\}$, set $H_{v_{i}}:=H_{\sigma(i)}$ and $K_{v_{i}}:=K_{\tau(i)}$, and for $j\in\{1,\dots,k\}$, set $H_{u_{j}}:=\{1\}$ and $K_{u_{j}}:=\{1\}$.

We say that $T_{H}$ and $T_{K}$ are \emph{equal} as $\mathfrak{S}$-labellings of $T$, written $T_{H}=T_{K}$, if (and only if) the map $v_{j}\mapsto v_{(\sigma^{-1}(\tau(j))}$ extends to an $(n,k)$-automorphism of $T$, and for each $i\in\{1,\dots,n\}$ there exists $h_{i}\in H_{i}$ so that $K_{i}=H_{i}^{h_{i}}$.
In particular, $(T:G_{1}^{h_{1}},\dots,G_{n}^{h_{n}})=(T:G_{\sigma(1)}^{k_{\sigma(1)}},\dots,G_{\sigma(n)}^{k_{\sigma(n)}})$ if and only if the map $v_{j}\mapsto v_{\sigma^{-1}(j)}$ extends to an $(n,k)$-automorphism of $T$, and for each $i\in\{1,\dots,n\}$ there exists $g_{i}\in G_{i}$ such that $k_{i}=g_{i}h_{i}$.

We say that $T_{H}$ and $T_{K}$ are \emph{equivalent} as $\mathfrak{S}$-labellings of $T$, written $T_{H}\simeq T_{K}$, if (and only if)
the map $v_{j}\mapsto v_{(\sigma^{-1}(\tau(j))}$ extends to an $(n,k)$-automorphism of $T$, and there exists an orientation of $T$ such that for every $w\in V(T)$ there exists $h_{w}\in G=G_{1}\ast\dots\ast G_{n}$ with $K_{w}=H_{w}^{h_{w}}$ and moreover $h_{t(e)} {h_{o(e)}}^{-1} \in H_{o(e)}$.
\end{defn}

\begin{rem}
This idea of equivalence is a simplified version of taking isomorphism classes of graphs of groups, as described by Bass in \cite[Section 2]{Bass1993}.
Our simplification relies on having trivial edge groups, and the fact that our graphs of groups are trees.
Note that $\simeq$ is indeed an equivalence relation on the set of $\mathfrak{S}$-labellings of $T$.
We will denote the equivalence class of an $\mathfrak{S}$-labelling $T_{H}$ by $\class{T_{H}}$.
\end{rem}

\begin{obs}\label{obs factor and inner autos with labellings}
Let $T$ be an $(n,k)$-tree and let $H_{1}\ast\dots\ast H_{N}$ be an $\mathfrak{S}$-free splitting for $G=G_{1}\ast\dots\ast G_{n}$ (where $\mathfrak{S}=(G_{1},\dots,G_{n})$).

Let $\varphi\in\auts(G)$ be a factor automorphism.
Then for each $i\in\{1,\dots,n\}$, $\varphi(G_{i})=G_{i}$, and we have $(T:\varphi(G_{\sigma(1}),\dots,\varphi(G_{\sigma(n}))=(T:G_{\sigma(1},\dots,G_{\sigma(n})$.

Now let $x\in G$ and let $\iota_{\sigma(x)}\in\aut(G)$ be the inner automorphism $g\mapsto x^{-1}gx=g^{x}$.
Then $(T:\iota_{x}(H_{\sigma(1)}),\dots,\iota_{x}(H_{\sigma(n)}))=(T:H_{\sigma(1)}^{x},\dots,H_{\sigma(n)}^{x})\simeq(T:H_{\sigma(1)},\dots,H_{\sigma(n)})$
since $x\in G$, and regardlesss of the orientation chosen for $T$, we have $h_{t(e)}h_{o(e)}^{-1}=xx^{-1}=1\in H_{o(e)}$ for each edge $e$ of $T$.

That is to say, inner automorphisms preserve equivalence classes of $\mathfrak{S}$-labellings, and factor automorphisms preserve $\mathfrak{S}$-labellings whose vertex groups are precisely the groups $G_{1},\dots,G_{n}$.
\end{obs}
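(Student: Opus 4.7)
The plan is to unpack the relevant definitions and check the stated equalities and equivalences directly; both halves of the observation are essentially bookkeeping against Definition \ref{defn equivalent labellings}, with no geometric content beyond choosing the correct conjugating elements at each vertex.

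First I would handle the factor automorphism statement. By Definition \ref{defn factor autos}, a factor automorphism $\varphi \in \auts(G)$ satisfies $\varphi|_{G_i} \in \aut(G_i)$ for every $i$, which in particular forces $\varphi(G_i) = G_i$ as subgroups of $G$. Plugging this into the $\mathfrak{S}$-labelling notation, $(T:\varphi(G_{\sigma(1)}),\dots,\varphi(G_{\sigma(n)}))$ and $(T:G_{\sigma(1)},\dots,G_{\sigma(n)})$ assign the same vertex group to each labelled vertex $v_i$ and the trivial group to each trivial vertex $u_j$, so equality of $\mathfrak{S}$-labellings holds trivially by taking the identity $(n,k)$-automorphism of $T$ and $h_i = 1 \in G_i$ in the criterion of Definition \ref{defn equivalent labellings}.

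Next I would handle the inner automorphism statement. The equality $(T:\iota_{x}(H_{\sigma(1)}),\dots,\iota_{x}(H_{\sigma(n)})) = (T:H_{\sigma(1)}^{x},\dots,H_{\sigma(n)}^{x})$ is immediate from $\iota_x(H) = x^{-1}Hx = H^x$. For the equivalence $(T:H_{\sigma(1)}^{x},\dots,H_{\sigma(n)}^{x}) \simeq (T:H_{\sigma(1)},\dots,H_{\sigma(n)})$, I would verify Definition \ref{defn equivalent labellings} by choosing the identity $(n,k)$-automorphism of $T$ (so the permutation condition holds with $\sigma = \tau$), assigning $h_w := x$ at \emph{every} vertex $w \in V(T)$ (including trivial vertices, where $\{1\}^x = \{1\}$), and observing that for any orientation and any edge $e$,
\[
h_{t(e)} h_{o(e)}^{-1} = x x^{-1} = 1 \in H_{o(e)}.
\]

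I do not expect a genuine obstacle here: the only thing to watch is that the \emph{same} $x$ is used at every vertex, which is exactly what makes inner automorphisms (as opposed to more general maps shifting each factor by an independent conjugator) land inside a single equivalence class. The contrast with the pure symmetric case -- where different $g_i$'s may be needed at different vertices, and the edge-group compatibility $h_{t(e)} h_{o(e)}^{-1} \in H_{o(e)}$ becomes a real constraint -- is precisely what motivates the definition of $\simeq$, and verifying that constraint collapses to a triviality when the conjugator is globally constant.
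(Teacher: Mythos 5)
Your proposal is correct and follows essentially the same route as the paper's own justification: for the factor automorphism part you use $\varphi|_{G_i}\in\aut(G_i)$ to get $\varphi(G_i)=G_i$ and take $h_i=1$ in the equality criterion, and for the inner automorphism part you take the constant conjugator $h_w=x$ at every vertex so that $h_{t(e)}h_{o(e)}^{-1}=1\in H_{o(e)}$ for any orientation, which is exactly the argument given inline in the Observation. No gaps.
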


\subsection{Collapses} %	----------------------------------- Collapses ------------------------------------

In order to build our complex, we will need some kind of relation between our graphs of groups; we may then have edges (1-cells) in our complex whenever their endpoints are related.
The relation we use is `collapsing':

\begin{defn}
Let $G=G_{1}\ast\dots\ast G_{n}$ be a free product with $\mathfrak{S}=(G_{1},\dots,G_{n})$
and let $(T:H_{\sigma(1)},\dots,H_{\sigma(n)})$ be an $\mathfrak{S}$-labelling of some $(n,k)$-tree $T$.
\begin{itemize}
\item We say an edge of $(T:H_{\sigma(1)},\dots,H_{\sigma(n)})$ is \emph{collapsible} if it has at least one trivial endpoint (i.e. at least one endpoint whose vertex group is the trivial group).
\item The process of replacing a collapsible edge (including its endpoints) of $(T:H_{\sigma(1)},\dots,H_{\sigma(n)})$ by a single vertex whose vertex group is the free product of the vertex groups of the endpoints of said edge is called \emph{collapsing}.
This new vertex group will always be of the form $H\ast\{1\}$, which we will write simply as $H$ (for $H\in\{\ \{1\}, H_{\sigma(1)},\dots,H_{\sigma(n)} \}$).
\item We say $(T':H_{\tau(1)}',\dots,H_{\tau(n)}')$  is a \emph{collapse} of $(T:H_{\sigma(1)},\dots,H_{\sigma(n)})$ if $(T':H_{\tau(1)}',\dots,H_{\tau(n)}')$ can be achieved as the result of successively collapsing edges of $(T:H_{\sigma(1)},\dots,H_{\sigma(n)})$.
Note that we do not consider a graph of groups to be a collapse of itself.
\end{itemize}
\end{defn}

\begin{obs}\label{obs collapses respect equivalence}
Suppose we have $(T_{1}:H_{\sigma(1)},\dots,H_{\sigma(n)}) \simeq (T_{1}:H'_{\tau(1)},\dots,H'_{\tau(n)})$ for some $(n,k)$-tree $T_{1}$.
Fix some collapsible edge $e$ of $T_{1}$, and let $T_{2}$ be the tree resulting from the collapse of $e$ in $T_{1}$.
Then $T_{2}$ is an $(n,k-1)$-tree and $(T_{2}:H_{\sigma(1)},\dots,H_{\sigma(n)}) \simeq (T_{2}:H'_{\tau(1)},\dots,H'_{\tau(n)})$.

We may thus consider collapses on equivalence classes of $\mathfrak{S}$-labellings in the natural way, i.e. $\class{T'}$ is a collapse of $\class{T}$ if there is some representative $(T:H_{\sigma(1)},\dots,H_{\sigma(n)})$ of $\class{T}$ and some representative $(T':H'_{\tau(1)},\dots,H'_{\tau(n)})$ of $\class{T'}$ such that $(T':H'_{\tau(1)},\dots,H'_{\tau(n)})$ is a collapse of $(T:H_{\sigma(1)},\dots,H_{\sigma(n)})$.
Note that if $\class{(T':H_{\tau(1)}',\dots,H_{\tau(n)}')}$ is a collapse of $\class{(T:H_{\sigma(1)},\dots,H_{\sigma(n)})}$ then $(T':H_{\tau(1)}',\dots,H_{\tau(n)}')\simeq(T':H_{\sigma(1)},\dots,H_{\sigma(n)})$.
\end{obs}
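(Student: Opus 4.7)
The plan is to prove the three parts of the observation separately.

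First, for the claim that $T_2$ is an $(n,k-1)$-tree: collapsing $e$ reduces the trivial-vertex count by exactly one, and if both endpoints of $e$ happen to be trivial the merged vertex has valency $\ge 3+3-2 = 4$, so the minimum-valency condition for trivial vertices is preserved.

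For the main equivalence claim, starting from witnessing data $(\mathcal{O}_1, \{h_w\})$ for $(T_1:H_\sigma)\simeq(T_1:H'_\tau)$, let $w_1$ be a trivial endpoint of $e$ and $w_2$ the other. The first attempt is to set $h_{\bar w}^{(2)}:=h_{w_1}$, keep $h_w^{(2)}:=h_w$ for $w\notin\{w_1,w_2\}$, and transfer $\mathcal O_1$ to the surviving edges of $T_2$. The conjugation condition $H'_{\bar w}=H_{\bar w}^{h_{w_1}}$ follows from the $e$-coherence, which in every orientation of $e$ forces $h_{w_1}h_{w_2}^{-1}\in H_{w_2}=H_{\bar w}$; since this quotient normalises $H_{\bar w}$, conjugating $H_{\bar w}$ by $h_{w_1}$ or by $h_{w_2}$ yields the same subgroup $H'_{w_2}=H'_{\bar w}$. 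Coherence is inherited on edges of $T_2$ disjoint from $e$, and on edges of $T_1$ previously incident to $w_1$ (Case A) the condition is literally identical to the old one because $h_{\bar w}=h_{w_1}$.

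The main obstacle is the coherence on an edge $e'=w_2w_3$ of $T_1$ (Case B), which becomes $\bar w\,w_3$ in $T_2$. Writing $m:=h_{w_1}h_{w_2}^{-1}\in H_{w_2}$, the transferred coherence at $e'$ involves $h_{w_1}h_{w_3}^{-1}=m\cdot(h_{w_2}h_{w_3}^{-1})$: in one orientation this lies in $H_{w_2}\cdot H_{w_2}=H_{w_2}=H_{\bar w}$ (fine), but in the other it lies in $H_{w_2}\cdot H_{w_3}$ and fails to lie in $H_{w_3}$ when $m\ne1$. The plan to resolve this is to replace $h_{w_3}$ with $h_{w_2}$ --- which sits in the same left $H_{w_3}$-coset as $h_{w_3}$ by the old $e'$-coherence, and preserves $H'_{w_3}$ because $h_{w_2}h_{w_3}^{-1}\in H_{w_3}=N_G(H_{w_3})$ --- and to orient $e'$ toward $\bar w$ in $\mathcal O_2$, so the new coherence reads $h_{w_2}h_{w_1}^{-1}=m^{-1}\in H_{\bar w}$. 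Such modifications may cascade outward (the same ``shift by one'' pattern $h_{w_5}^{(2)}:=h_{w_3}$ handles the next layer of vertices), so the full construction is most cleanly set up by rooting $T_2$ at $\bar w$, orienting all edges radially away from the root, and building $h_w^{(2)}$ inductively along radial paths; at each step one chooses a representative in the appropriate left $H_w$-coset, using the fact that $N_G(H_w)=H_w$ for every non-trivial factor of the splitting.

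The concluding statement of the observation is then a formal consequence: if $\class{(T':H'_\tau)}$ is a collapse of $\class{(T:H_\sigma)}$, pick representatives $(T:H_{\sigma''})\in\class{(T:H_\sigma)}$ and $(T':H'_{\tau''})\in\class{(T':H'_\tau)}$ with the latter a direct collapse of the former; applying the main claim to $(T:H_\sigma)\simeq(T:H_{\sigma''})$ gives equivalence of their direct collapses, yielding $(T':H_\sigma)\simeq(T':H'_{\tau''})\simeq(T':H'_\tau)$.
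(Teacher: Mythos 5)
The paper offers no proof of this Observation, so there is nothing of its own to compare against; your argument is a correct verification of what the paper leaves to the reader, and it isolates exactly the one non-trivial point, namely the coherence condition on an edge $e'=w_{2}w_{3}$ carrying the orientation $o(e')=w_{3}$, where $h_{w_{1}}h_{w_{3}}^{-1}=m\,(h_{w_{2}}h_{w_{3}}^{-1})$ with $m:=h_{w_{1}}h_{w_{2}}^{-1}\in H_{w_{2}}$ need not lie in $H_{w_{3}}$. Your resolution (shift $h_{w_{3}}$ to $h_{w_{2}}$, reorient, and propagate outward by induction on a rooting of $T_{2}$ at $\bar w$) is sound. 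Three small points of precision. First, the displayed condition $h_{w_{2}}h_{w_{1}}^{-1}=m^{-1}\in H_{\bar w}$ is the coherence $h_{t(e')}h_{o(e')}^{-1}\in H_{o(e')}$ for $o(e')=\bar w$, $t(e')=w_{3}$, so ``orient $e'$ \emph{away from} $\bar w$'' is what you mean. Second, in the general inductive step the invariant to carry is $h_{w}^{(2)}\in H_{w}h_{w}$ (which guarantees $H_{w}^{h_{w}^{(2)}}=H_{w}'$), and in the shift case the new value must be the \emph{old} parent value $h_{p}$, not the already-modified $h_{p}^{(2)}$: one has $h_{p}(h_{p}^{(2)})^{-1}=h_{p}(a_{p}h_{p})^{-1}=a_{p}^{-1}\in H_{p}$ and $h_{p}=(h_{p}h_{w}^{-1})h_{w}\in H_{w}h_{w}$, whereas $h_{p}^{(2)}=a_{p}h_{p}$ with $a_{p}\in H_{p}$ generally fails to lie in $H_{w}h_{w}$. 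Your explicit instances $h_{w_{3}}^{(2)}:=h_{w_{2}}$ and $h_{w_{5}}^{(2)}:=h_{w_{3}}$ do use the old values, so the pattern is right, but the write-up should make this explicit. Third, malnormality is never needed: you only use the trivial direction that conjugating $H_{w}$ by an element of $H_{w}$ fixes it setwise.

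One clause of Definition \ref{defn equivalent labellings} goes unaddressed: that $v_{j}\mapsto v_{\sigma^{-1}(\tau(j))}$ extends to a graph automorphism of $T_{2}$. This is automatic here --- the vertex-wise condition $K_{w}=H_{w}^{h_{w}}$ already forces $\sigma=\tau$, since distinct free factors of a free product are non-conjugate, so the map in question is the identity --- but it is worth a sentence, as the paper's definition lists it as a separate requirement. The final paragraph of your proposal is fine, granting that the single-edge statement is iterated over a sequence of collapses.
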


%\textcolor{red}{
%\begin{lemma}\label{lemma action respects collapses}
%Action respects collapses.
%\end{lemma}
%}

\subsection{Universal Cover of an $\mathfrak{S}$-Labelling} %	----------------------------------- Universal Cover ------------------------------------

The Fundamental Theorem of Bass--Serre Theory tells us that we have a correspondence between graphs of groups and certain $G$-trees (namely, their universal covers), which we will later exploit. We adapt the following construction from Serre \cite[Chapter I Section 5.3]{Serre1980}, restricted to our case of $(n,k)$-trees.

\begin{defn}[Serre {\cite{Serre1980}}]\label{defn universal cover}
Let $G=G_{1}\ast\dots\ast G_{n}$ be a free product with $\mathfrak{S}=(G_{1},\dots,G_{n})$
and let $T_{H}:=(T:H_{1},\dots,H_{n})$ be an $\mathfrak{S}$-labelling of some $(n,k)$-tree $T$ with vertex set $V(T)=\{v_{1},\dots v_{n}, u_{1}, \dots, u_{k}\}$ and edge set $E(T)$.
For $i\in\{1,\dots,n\}$, set $H_{v_{i}}:=H_{i}$ and for $j\in\{1,\dots,k\}$, set $H_{u_{j}}:=\{1\}$.
The \emph{universal cover} of $T_{H}$, denoted $\tree{T_{H}}$, is the infinite tree with vertex set 
$\displaystyle{ V(\tree{T_{H}})=\smashoperator[lr]{ \bigsqcup_{ \substack{ g\in G \\ w\in V(T) } } } H_{w}\coset g }$
and edge set $\displaystyle{ E(\tree{T_{H}})=\smashoperator[lr]{ \bigsqcup_{ \substack{ g\in G \\ e\in E(T) } } } e\coset g }$
(with $o(e\coset g)=H_{o(e)}\coset g$), where $H_{w}\coset g$ is the coset $\{hg \mid h\in H_{w}\}\subset G$.
\end{defn}

\begin{obs}
We have a natural isometric action of $G$ on $\tree{T_{H}}$ given by $(x\coset g) \cdot h = x\coset gh$ for $x\coset g \in V(\tree{T_{H}}) \cup E(\tree{T_{H}})$ and $h\in G$.
Under this action we have that the $G$-stabiliser of each edge in $\tree{T_{H}}$ is trivial (since all edge groups in $T_{H}$ are trivial), and the $G$-stabiliser of a vertex $H_{w}\coset g$ in $\tree{T_{H}}$ is precisely the subgroup $g^{-1} H_{w} g = {H_{w}}^{g}$ of $G$.
\end{obs}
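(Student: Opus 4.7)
The plan is to verify in turn the three assertions of the observation: that the given formula defines a well-defined isometric $G$-action on $\tree{T_{H}}$; that the stabiliser of any edge is trivial; and that the stabiliser of the vertex $H_{w}\coset g$ equals the conjugate subgroup $g^{-1}H_{w}g$.

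For the first assertion I would start with well-definedness. On the edge set the label $e\coset g$ is formally the pair $(e,g)\in E(T)\times G$, so the rule $(e\coset g)\cdot h := e\coset gh$ has no coset representative to worry about. On the vertex set, however, $H_{w}\coset g$ really denotes the right coset $\{h_{0}g:h_{0}\in H_{w}\}$, and I need to check that if $g=h_{0}g'$ with $h_{0}\in H_{w}$ then $H_{w}\coset gh=H_{w}\coset g'h$. This is a one-line computation, since $gh=h_{0}(g'h)$ and $H_{w}$ is a subgroup. The identity and associativity axioms then follow immediately from right multiplication in $G$.

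For isometry, since $\tree{T_{H}}$ is a simplicial tree with unit-length edges it is enough to show the action preserves adjacency. I would read this straight off the incidence relation from Definition \ref{defn universal cover}: applying $h$ to the endpoint $o(e\coset g)=H_{o(e)}\coset g$ gives $H_{o(e)}\coset gh$, which is precisely the origin $o(e\coset gh)$ of the image edge; the analogous computation for $t$ under any chosen orientation then shows the pair of endpoints of $e\coset g$ is carried to the pair of endpoints of $(e\coset g)\cdot h$.

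The stabiliser computations are then essentially mechanical. An edge $e\coset g$ is fixed by $h$ iff $(e,g)=(e,gh)$ in $E(T)\times G$, forcing $h=1$. A vertex $H_{w}\coset g$ is fixed iff $H_{w}g=H_{w}gh$ as right cosets, which rearranges to $ghg^{-1}\in H_{w}$, equivalently $h\in g^{-1}H_{w}g=H_{w}^{g}$. I do not expect any real obstacle here; the only point requiring a moment of care is the bookkeeping asymmetry between the vertex labels (honest cosets by the possibly non-trivial $H_{w}$, producing a conjugate stabiliser) and the edge labels (formal indices with trivial edge group, producing a trivial stabiliser), which is exactly what the observation is recording.
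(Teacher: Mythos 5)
Your verification is correct and is exactly the routine check the paper leaves implicit: the Observation is stated without proof, and your three steps (well-definedness via right multiplication on cosets, adjacency-preservation read off from $o(e\coset g)=H_{o(e)}\coset g$, and the coset rearrangement $H_{w}gh=H_{w}g \Leftrightarrow h\in g^{-1}H_{w}g$) are precisely what the definition is designed to make immediate. No gaps.
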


\begin{rem}
If a vertex $w\in V(T)$ has valency $N$ in $T$, then the vertex $H_{w}\coset g \in V(\tree{T_{H}})$ will have valency $N |H_{w}|$ in $\tree{T_{H}}$.
In particular, if $H_{w}=\{1\}$ then these valencies must be equal.
However, if $H_{w}$ is infinite, then $H_{w}\coset g$ will have infinite valency in $\tree{T_{H}}$.
\end{rem}

\subsection{Outer Space for a Free Product}\label{subsection outer space} %	----------------------------------- Outer Space ------------------------------------

There are several equivalent formulations of an `Outer Space' on which $\outs(G)$ acts.
Originally, the study of the (pure) symmetric automorphisms of a free product by their action on a cellular complex was due to McCullough--Miller\cite{McCullough1996}.
Later, Guirardel--Levitt \cite{Guirardel2007} introduced their deformation space for studying the outer automorphisms of a Grushko decomposition of a free product.
This space retracts onto a cellular complex which, in the case where there is no free rank, exactly matches that of McCullough--Miller.
Since our decomposition need not be a Grushko decomposition, the Outer Space we use is due to McCullough--Miller.
However, the description we give is more in the spirit of Guirardel--Levitt.

Let $G=G_{1}\ast\dots\ast G_{n}$ be a free product with $n\ge3$ where each factor group $G_{i}$ is non-trivial, and set $\mathfrak{S}=(G_{1},\dots,G_{n})$.

\begin{rem}
The relation `$(T_{1}:H_{\sigma(1)},\dots,H_{\sigma(n)})$ is a collapse of $(T_{2}:H_{\sigma(1)},\dots,H_{\sigma(n)})$' defines a strict partial order on the set of collapses of equivalence classes of $\mathfrak{S}$-labellings of a given $(n,k)$-tree. This extends to encompass \textit{all} $(n,k)$-trees, even when $k$ is allowed to vary. %, and construct our complex.
\end{rem}

\begin{defn}Let $G=G_{1}\ast\dots\ast G_{n}$ and $\mathfrak{S}=(G_{1},\dots,G_{n})$ be as above.
The complex $\mathcal{O}_{n}(\mathfrak{S})$ is the geometric realisation of the poset of equivalence classes of $\mathfrak{S}$-labellings of $(n,k)$-trees (where $n\ge3$ is fixed and $k\ge0$ varies),
with $T_{1}\prec T_{2}$ if (and only if) $T_{2}$ is a collapse of $T_{1}$.
\end{defn}

In this paper, we will always have $\mathfrak{S}=(G_{1},\dots,G_{n})$, and so we will write $\mathcal{O}_{n}$ for $\mathcal{O}_{n}(\mathfrak{S})$.

\begin{rem}
Note that $\mathcal{O}_{n}$ is a simplicial complex.
Moreover for $m\ge2$, for each $(m+1)$-clique in $\mathcal{O}_{n}^{(1)}$ there is a unique $m$-cell in $\mathcal{O}_{n}$ which contains precisely the vertices of that clique.
That is to say, $\mathcal{O}_{n}$ is the flag complex on $\mathcal{O}_{n}^{(1)}$.
\end{rem}

\begin{notation}
We will call cells in $\mathcal{O}_{n}^{(0)}$ `vertices'; these are equivalence classes of $\mathfrak{S}$-labellings of $(n,k)$-trees.
We will call cells in $\mathcal{O}_{n}^{(1)}$ `edges'; there is an edge $[T_{1}]\dash[T_{2}]$ precisely when $[T_{2}]$ is a collapse of $[T_{1}]$.
We may abuse notation and also write $[T_{1}]\dash[T_{2}]$ when $[T_{1}]$ is a collapse of $[T_{2}]$.
\end{notation}

By Observation \ref{obs Grushko decomposition}, if the factor groups $G_{i}$ are additionally freely indecomposable and not infinite cyclic, then $G_{1}\ast\dots\ast G_{n}$ is a Grushko decomposition for $G$, and our description of $\mathcal{O}_{n}$ is precisely the barycentric spine of Guirardel and Levitt's `Outer Space for a Free Product' \cite{Guirardel2007} (where the barycentric spine is a CW-complex resulting from taking the first barycentric subdivision of Outer Space and linearly retracting off the `missing' faces).

We thus have that $\mathcal{O}_{n}$ is isometric to the barycentric spine of Guirdardel and Levitt's Outer Space for a free product of $n$ non-trivial, freely indecomposable, not infinite cyclic, pairwise non-isomorphic factors (even when our factor groups do not satisfy these conditions --- since we want an action of $\outs(G)$, not $\out(G)$).
%This space is well-understood, and will not be the focus of this paper, although our results still apply.

%\subsubsection*{The Action of $\outs(G)$ on $\mathcal{O}_{n}$} 

\begin{defn}\label{defn action of out(G)} %	----------------------------	O_n Action	-------------------------
Let $\Psi\in\outs(G)$ have representative $\psi\in\aut_{\mathfrak{S}}(G)$ and
let $T_{0}=(T:H_{\sigma(1)},\dots,H_{\sigma(n)})$ be a representative of some point $\class{T_{0}}$ in $\mathcal{O}_{n}^{(0)}$.
We define: \[\class{T_{0}}\cdot\Psi := \class{ (T:\psi(H_{\sigma(1)}),\dots,\psi(H_{\sigma(n)})) } \]
\end{defn}

\begin{rem}
Note that by Lemma \ref{lemma pure symmetric autos give S-free splittings}, $\class{T_{0}}\cdot\Psi$ is indeed a point in $\mathcal{O}_{n}$.
One ought to additionally check that this action is independent of both the choice of representative for $\Psi\in\outs{G}$ and the choice of representative for $\class{T_{0}}\in\mathcal{O}_{n}^{(0)}$.
The former point is addressed by Observation \ref{obs factor and inner autos with labellings}
and the latter point is a similarly straightforward exercise.
\end{rem}

%\edit{well-defined by Lemma \ref{lemma pure symmetric autos give S-free splittings}}

%\edit{Following can be replaced with action on edges, but would need to verify $[T_{0}]\cdot\Psi \dash [T_{1}]\cdot\Psi$ is still an edge in $\mathcal{O}_{n}$ --- need that action respects collapses}

\begin{obs}\label{obs action preserves adjacency}
From Observation \ref{obs collapses respect equivalence} we deduce that if $\class{T_{0}}\dash\class{T_{1}}$ is an edge in $\mathcal{O}_{n}$, then so too is $\left(\class{T_{0}}\cdot\Psi\right) \dash \left(\class{T_{1}}\cdot\Psi\right)$ for any $\Psi\in\outs(G)$.
Thus the action of $\outs(G)$ on $\mathcal{O}_{n}^{(0)}$ preserves adjacency, and we may extend this to an action on $\mathcal{O}_{n}$ by sending a cell corresponding to an $(m+1)$-clique $\{V_{0},\dots,V_{m}\}$ to the cell corresponding to the $\outs(G)$-image of the $(m+1)$-clique, $\{V_{0}\cdot\Psi,\dots,V_{m}\cdot\Psi\}$.
\end{obs}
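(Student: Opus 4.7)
The plan is to derive the adjacency-preservation from Observation \ref{obs collapses respect equivalence} by showing that the action of a fixed $\Psi\in\outs(G)$ commutes with the collapse relation on equivalence classes of $\mathfrak{S}$-labellings. Once this is established, extending the action to higher-dimensional cells is free, because $\mathcal{O}_n$ is the flag complex on $\mathcal{O}_n^{(1)}$.

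First, I would take an edge $\class{T_0}\dash\class{T_1}$ in $\mathcal{O}_n$ and, without loss of generality, assume $\class{T_1}$ is a collapse of $\class{T_0}$. By Observation \ref{obs collapses respect equivalence}, I may choose representatives $T_0=(T\colon H_{\sigma(1)},\dots,H_{\sigma(n)})$ and $T_1=(T'\colon H_{\sigma(1)},\dots,H_{\sigma(n)})$, with $T'$ obtained from the $(n,k)$-tree $T$ by successively collapsing a set of edges, each of which has at least one trivial endpoint. Fix any representative $\psi\in\auts(G)$ of $\Psi$; Definition \ref{defn action of out(G)} then gives $\class{T_0}\cdot\Psi=\class{(T\colon\psi(H_{\sigma(1)}),\dots,\psi(H_{\sigma(n)}))}$ and $\class{T_1}\cdot\Psi=\class{(T'\colon\psi(H_{\sigma(1)}),\dots,\psi(H_{\sigma(n)}))}$.

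The heart of the argument is the observation that the very same sequence of edge-collapses from $T$ to $T'$ is still valid after relabelling. The trivial vertices $u_j$ carry the trivial vertex group in every $\mathfrak{S}$-labelling by construction, while each $v_i$ carries the group $\psi(H_{\sigma(i)})$, which is non-trivial if and only if $H_{\sigma(i)}$ is (as $\psi$ is an automorphism of $G$ sending non-trivial subgroups to non-trivial subgroups). Hence the collapsibility of each edge is unchanged, so $(T'\colon\psi(H_{\sigma(1)}),\dots,\psi(H_{\sigma(n)}))$ is a collapse of $(T\colon\psi(H_{\sigma(1)}),\dots,\psi(H_{\sigma(n)}))$, and $\class{T_0}\cdot\Psi\dash\class{T_1}\cdot\Psi$ is indeed an edge of $\mathcal{O}_n$.

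For the extension to cells of arbitrary dimension, I would invoke the flag-complex structure recorded after the definition of $\mathcal{O}_n$: an $m$-cell of $\mathcal{O}_n$ is determined by an $(m+1)$-clique $\{V_0,\dots,V_m\}$ in $\mathcal{O}_n^{(1)}$. Adjacency preservation on vertices, applied to every pair $V_i,V_j$ in the clique, yields that $\{V_0\cdot\Psi,\dots,V_m\cdot\Psi\}$ is again an $(m+1)$-clique, and thus bounds a unique $m$-cell, which we declare to be the $\Psi$-image of the original. The group-action axioms on cells then descend from the corresponding axioms on vertices, already discussed after Definition \ref{defn action of out(G)}. The only step that really requires any care is the preservation of triviality of vertex groups under $\psi$; everything else is formal bookkeeping about representatives and the flag condition.
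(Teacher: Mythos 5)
Your proposal is correct and follows essentially the same route the paper intends: the observation is justified there in one line by appeal to Observation \ref{obs collapses respect equivalence}, and your argument simply fills in the natural details — choosing representatives with $T_{1}$ a literal collapse of $T_{0}$, noting that $\psi$ preserves (non-)triviality of vertex groups so the same collapse sequence applies after relabelling, and then using the flag structure to extend to higher cells. No gaps.
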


%
%\textcolor{Green}{
%\begin{defn}
%The action of an element $\Psi\in\outs(G)$ on a $k$-cell $[T_{0},\dots,T_{k}]$ of $\mathcal{C}_{n}$ is defined to be: \[[T_{0},\dots,T_{k}]\cdot\Psi:=[T_{0}\cdot\Psi,\dots,T_{k}\cdot\Psi]\]
%\end{defn}
%%
%\begin{rem}
%\cite[Proposition 2.3.3]{Iveson2024} states that if $[T_{0},\dots,T_{k}]$ is a cell in $\mathcal{C}_{n}$ and $\Psi\in\outs(G)$, then $[T_{0}\cdot\Psi,\dots,T_{k}\cdot\Psi]$ is also a cell in $\mathcal{C}_{n}$.
%Thus the extended action here is well-defined.
%\end{rem}
%%
%One may choose a fundamental domain in $\mathcal{C}_{n}$ for this action; a natural choice for this is to take all the vertices in $\mathcal{C}_{n}^{0}$ who have a representative graph of groups with labelling comprising precisely the subgroups $G_{1},\dots,G_{n}$ of $G$.
%This concept is explored in Section 2.3 of \cite{Iveson2024}, where it is shown that such a choice results in a strict fundamental domain for the action.
%}

%	-------------------------------------------------------------------------------- 	Subcomplex of Outer Space 	---------------------------------------------------------------------

\section{A Subcomplex $\mathcal{S}_{n}$ of Outer Space}\label{section Sn}

%\edit{Replace with section on Outer Space; will care about subcomplex spanned by $\alpha$ and $A$ graphs}
In this paper, we care about two specific types of vertex in $\mathcal{O}_{n}^{(0)}$: $\alpha$-graph classes and $A$-graph classes.
We will then consider the subcomplex $\mathcal{S}_{n}$ of $\mathcal{O}_{n}$ spanned by these vertices.
This subcomplex will encode Whitehead outer automorphisms of the splitting $G_{1}\ast\dots\ast G_{n}$.

\subsection{$\alpha$-Graphs and $A$-Graphs} %	----------------------------	alpha and A Graphs	-------------------------

We begin by describing `$\alpha$-graphs' and `$A$-graphs', and their basic properties (equivalence and adjacency).

\begin{defn}
We define $\alpha$ to be the $(n,1)$-tree in Figure \ref{fig alpha graph shape}.
We say $[\alpha_{0}]\in\mathcal{O}_{n}^{(0)}$ is an $\alpha$-graph class if it has a representative $\alpha_{0}=(\alpha:H_{\sigma(1)},\dots,H_{\sigma(n)})$ which is an $\mathfrak{S}$-labelling of $\alpha$ (for some $\sigma\in S_{n}$).
We call $\alpha_{0}$ itself an $\alpha$-graph.
\end{defn}

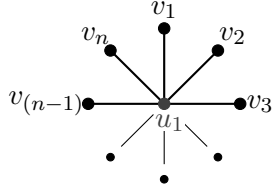
\begin{figure}[h]
\centering
\begin{tikzpicture}  % expanded alpha shape
%edges:
\draw[thick] (0,0) -- (0,1); % 1
\draw[thick] (0,0) -- (0.707,0.707); % 2
\draw[thick] (0,0) -- (1,0); % 3
\draw[thin] (0.1414,-0.1414) -- (0.5656,-0.5656); % (4)
\draw[thin] (0,-0.2) -- (0,-0.8); % (5)
\draw[thin] (-0.1414,-0.1414) -- (-0.5656,-0.5656); % (6)
\draw[thick] (0,0) -- (-1,0); % n-1
\draw[thick] (0,0) -- (-0.707,0.707); % n
%vertices:
\filldraw[darkgray] (0,0) circle [radius=0.075cm];
\filldraw (0,1) circle [radius=0.075cm];
\filldraw (0.707,0.707) circle [radius=0.075cm];
\filldraw (1,0) circle [radius=0.075cm];
\filldraw (0.707,-0.707) circle [radius=0.05cm];
\filldraw (0,-1) circle [radius=0.05cm];
\filldraw (-0.707,-0.707) circle [radius=0.05cm];
\filldraw (-1,0) circle [radius=0.075cm];
\filldraw (-0.707,0.707) circle [radius=0.075cm];
%labels:
\node[darkgray,fill=white,inner sep=0pt] at (0.075,-0.25) {$u_{1}$};
\node at (0,1.25) {$v_{1}$};
\node at (0.9,0.9) {$v_{2}$};
\node at (1.2625,0) {$v_{3}$};
\node at (-1.55,0) {$v_{(n-1)}$};
\node at (-0.9,0.9) {$v_{n}$};
\end{tikzpicture}
\caption{The Graph `$\alpha$' with vertices $v_{1}$, \dots, $v_{n}$, $u_{1}$}
\label{fig alpha graph shape}
\end{figure}

\begin{obs}\label{obs alpha graph autos}
Given $\sigma\in S_{n}$, we can define a map $\gamma_{\sigma}$ on $\alpha$ by $v_{j}\mapsto v_{\sigma(j)}$, $u_{1}\mapsto u_{1}$, and $u_{1}\dash v_{j} \mapsto u_{1}\dash v_{\sigma(j)}$ for each $j\in\{1,\dots,n\}$.
Clearly $\gamma_{\sigma}$ is a graph automorphism of $\alpha$ and satisfies $\gamma_{\sigma}(v_{j})\in\{v_{1},\dots,v_{n}\}$, that is, $\gamma_{\sigma}$ is an $(n,1)$-automorphism of $\alpha$.
Now for any $\mathfrak{S}$-labelling $(\alpha:H_{\sigma(1)},\dots,H_{\sigma(n)})$ of $\alpha$, we have: $$(\alpha:H_{\sigma(1)},\dots,H_{\sigma(n)})=(\alpha:H_{1},\dots,H_{n})$$
\end{obs}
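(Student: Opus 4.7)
The plan is to verify each assertion in the observation directly from the relevant definitions; no machinery beyond carefully unpacking Definitions \ref{defn (n,k)-tree} and \ref{defn equivalent labellings} is needed.

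First I would check that $\gamma_{\sigma}$ is a graph automorphism of $\alpha$ satisfying the additional conditions in Definition \ref{defn (n,k)-tree} required to be an $(n,1)$-automorphism. On vertices, the prescription $v_{j}\mapsto v_{\sigma(j)}$, $u_{1}\mapsto u_{1}$ gives a bijection of $V(\alpha)$ because $\sigma\in S_{n}$ is a permutation. Every edge of $\alpha$ has the form $u_{1}\dash v_{j}$ for some $j\in\{1,\dots,n\}$, so the induced map $u_{1}\dash v_{j}\mapsto u_{1}\dash v_{\sigma(j)}$ on $E(\alpha)$ is itself a bijection (with inverse $\gamma_{\sigma^{-1}}$) and adjacency is visibly preserved. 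Since $\gamma_{\sigma}$ fixes $u_{1}$ and maps $\{v_{1},\dots,v_{n}\}$ into itself, it meets the extra constraint in Definition \ref{defn (n,k)-tree}, so it is an $(n,1)$-automorphism.

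Next I would establish the asserted equality of $\mathfrak{S}$-labellings by applying Definition \ref{defn equivalent labellings} with $T_{H}:=(\alpha:H_{\sigma(1)},\dots,H_{\sigma(n)})$ (associated permutation $\sigma$) and $T_{K}:=(\alpha:H_{1},\dots,H_{n})$ (associated permutation $\tau=\mathrm{id}$). The first condition of that definition demands that $v_{j}\mapsto v_{\sigma^{-1}(\mathrm{id}(j))}=v_{\sigma^{-1}(j)}$ extend to an $(n,1)$-automorphism of $\alpha$, and the preceding paragraph (applied to $\sigma^{-1}$) shows that $\gamma_{\sigma^{-1}}$ is exactly such an extension. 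The second condition asks that for each $i\in\{1,\dots,n\}$ there exist $h_{i}\in H_{i}$ with $K_{i}=H_{i}^{h_{i}}$; since $K_{i}=H_{i}$, we may take $h_{i}=1$ (or indeed any element of $H_{i}$, as $H_{i}$ is closed under conjugation by its own elements). Both conditions hold, so $T_{H}=T_{K}$.

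I do not expect any genuine obstacle; the argument is essentially bookkeeping. The one subtlety worth flagging is the direction of the permutation: the map demanded in Definition \ref{defn equivalent labellings} is $v_{j}\mapsto v_{\sigma^{-1}(j)}$, so the witnessing $(n,1)$-automorphism is $\gamma_{\sigma^{-1}}$ rather than $\gamma_{\sigma}$ itself, even though the two are inverse to each other.
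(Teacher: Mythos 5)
Your verification is correct and is exactly the unpacking the paper intends: the Observation is asserted without proof, and checking it amounts to confirming that $\gamma_{\sigma}$ is an $(n,1)$-automorphism and then applying Definition \ref{defn equivalent labellings} with $\tau=\mathrm{id}$ and $h_{i}=1$, as you do. Your remark about the direction of the permutation (the witnessing automorphism being $\gamma_{\sigma^{-1}}$) is a fair and accurate point of care, consistent with the paper's remark following Definition \ref{defn s labelling}.
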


\begin{lemma}\label{lemma equivalent alpha labellings}
Two $\mathfrak{S}$-labellings, $(\alpha:H_{1},\dots,H_{n})$ and $(\alpha: K_{1},\dots,K_{n})$, of $\alpha$ are equivalent if and only if there exists some $g\in G$ so that for each $j\in\{1,\dots,n\}$, $K_{j}=H_{j}^{g}$.
\end{lemma}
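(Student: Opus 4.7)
The plan is to apply Definition \ref{defn equivalent labellings} directly, exploiting the very simple structure of $\alpha$: it is a star whose centre vertex $u_{1}$ carries the trivial group, and every edge has $u_{1}$ as one of its endpoints. Since both labellings in the statement already use the identity permutation of indices, the permutation condition in the definition of equivalence is trivially satisfied by the identity $(n,1)$-automorphism of $\alpha$. Hence only the data of an orientation together with conjugating elements $h_{w} \in G$ has any content.

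For the forward direction, I would suppose the two labellings are equivalent and pick the orientation and elements $\{h_{w}\}_{w \in V(\alpha)}$ supplied by the definition. The key candidate is $g := h_{u_{1}}$. For each $j$, the edge $e_{j} = u_{1} \dash v_{j}$ admits two possible orientations. If $o(e_{j}) = u_{1}$, the edge condition reads $h_{v_{j}} h_{u_{1}}^{-1} \in H_{u_{1}} = \{1\}$, which forces $h_{v_{j}} = g$. If instead $o(e_{j}) = v_{j}$, the condition reads $h_{u_{1}} h_{v_{j}}^{-1} \in H_{v_{j}} = H_{j}$, so $h_{v_{j}} = h^{-1} g$ for some $h \in H_{j}$. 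In both cases $H_{j}^{h_{v_{j}}} = H_{j}^{g}$, since conjugation by elements of $H_{j}$ preserves $H_{j}$. Combined with the vertex condition $K_{j} = H_{j}^{h_{v_{j}}}$, this yields $K_{j} = H_{j}^{g}$ for every $j$, as required.

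For the backward direction, given such a $g$, I would orient every edge $e_{j}$ from $u_{1}$ to $v_{j}$ and set $h_{u_{1}} := g$ and $h_{v_{j}} := g$ for all $j$. The vertex conditions hold by hypothesis, together with the observation that $\{1\}^{g} = \{1\}$; the edge conditions read $h_{v_{j}} h_{u_{1}}^{-1} = g g^{-1} = 1 \in \{1\} = H_{u_{1}}$; and the permutation condition is trivial, as already noted. This exhibits the required equivalence.

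There is really no serious obstacle here. The only point to be careful about is the bookkeeping of the two possible orientations of each edge in the forward direction, and the observation that the $H_{j}$-coset indeterminacy in $h_{v_{j}}$ is harmless because $H_{j}^{h} = H_{j}$ for $h \in H_{j}$. The whole lemma is essentially a direct unwinding of the definition in the particularly favourable setting where all edges touch the trivial vertex $u_{1}$.
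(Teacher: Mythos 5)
Your proof is correct and follows essentially the same route as the paper's: take $g = h_{u_{1}}$, split into the two orientation cases for each edge $u_{1}\dash v_{j}$ and use that conjugation by an element of $H_{j}$ fixes $H_{j}$, then for the converse set every $h_{w}=g$. The only (harmless) difference is that you explicitly note the permutation condition is satisfied by the identity automorphism, which the paper leaves implicit.
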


\begin{proof}
Suppose $(\alpha:H_{1},\dots,H_{n})\simeq(\alpha: K_{1},\dots,K_{n})$. Then there exist $h_{u},h_{1},\dots,h_{n}\in G$ so that for all $j\in\{1,\dots,n\}$, $K_{j}=H_{j}^{h_{j}}$ (and $\{1\}^{h_{u}}=\{1\}$). Moreover, for some fixed orientation of $\alpha$, we have $h_{t(e)}h_{o(e)}^{-1}\in H_{o(e)}$ for every edge $e$ of $\alpha$ (where $H_{u_{1}}=\{1\}$).
Every edge of $\alpha$ is of the form $u_{1}\dash v_{j}=:e_{j}$.
If $o(e_{j})=u_{1}$ then $h_{j}h_{u}^{-1}\in\{1\}$, that is, $h_{j}=h_{u}$ and so $K_{j}=H_{j}^{h_{j}}=H_{j}^{h_{u}}$.
If $o(e_{j})=v_{j}$ then $h_{u}h_{j}^{-1}\in H_{j}$ and $K_{j}=H_{j}^{h_{j}}=H_{j}^{h_{u}}$.
Hence taking $g=h_{u}$, we have that for every $j\in\{1,\dots,n\}$, $K_{j}=H_{j}^{g}$.

Now suppose there exists $g\in G$ such that for every $j\in\{1,\dots,n\}$, $K_{j}=H_{j}^{g}$.
Note that $K_{u_{1}}=\{1\}=\{1\}^{g}=H_{u_{1}}^{g}$.
Set $h_{w}=g$ for every vertex $w$ of $\alpha$.
Since $gg^{-1}=1$, then regardless of orientation, we have $h_{t(e)}h_{o(e)}^{-1}=1\in H_{o(e)}$ for every edge $e$ of $\alpha$.
Hence $(\alpha:H_{1},\dots,H_{n})\simeq(\alpha: K_{1},\dots,K_{n})$.
\end{proof}

\begin{defn}
We define $A$ to be the $(n,0)$-tree in Figure \ref{fig A graph shape}.
We say $\class{A_{0}}\in\mathcal{O}_{n}^{(0)}$ is an $A$-graph class if it has a representative $A_{0}=(A:H_{\sigma(1)},\dots,H_{\sigma(n)})$ which is an $\mathfrak{S}$-labelling of $A$ (for some $\sigma\in S_{n}$).
We call $A_{0}$ itself an $A$-graph.
\end{defn}

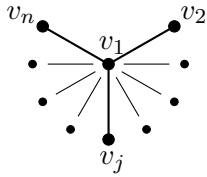
\begin{figure}[h]
\centering
\begin{tikzpicture} %expanded A shape
%edges:
\draw[thick] (0,0) -- (0,-1);
\draw[thick] (0,0) -- (0.866,0.5);
\draw[thick] (0,0) -- (-0.866,0.5);
\draw (0.2,0) -- (0.8,0);
\draw (0.1732,-0.1) -- (0.6928,-0.4);
\draw (0.1,-0.1732) -- (0.4,-0.6928);
\draw (-0.1,-0.1732) -- (-0.4,-0.6928);
\draw (-0.1732,-0.1) -- (-0.6928,-0.4);
\draw (-0.2,0) -- (-0.8,0);
%vertices:
\filldraw (0,0) circle [radius=0.075]; % v_1
\filldraw (0.866,0.5) circle [radius=0.075]; % v_2
\filldraw (1,0) circle [radius=0.05];
\filldraw (0.866,-0.5) circle [radius=0.05];
\filldraw (0.5,-0.866) circle [radius=0.05];
\filldraw (0,-1) circle [radius=0.075]; %v_ j
\filldraw (-0.5,-0.866) circle [radius=0.05];
\filldraw (-0.866,-0.5) circle [radius=0.05];
\filldraw (-1,0) circle [radius=0.05];
\filldraw (-0.866,0.5) circle [radius=0.075]; % v_n
%labels:
\node at (0.05,0.25) {$v_{1}$};
\node at (0.0525,-1.3) {$v_{j}$};
\node at (1.125,0.65) {$v_{2}$};
\node at (-1.15,0.65) {$v_{n}$};
\end{tikzpicture}
\caption{The Graph `$A$' with Vertices $v_{1}$, $v_{2}$, \dots, $v_{n}$}
\label{fig A graph shape}
\end{figure}

\begin{obs}\label{obs A graph autos}
Note that if $\eta\in S_{n}$ fixes $1$ (i.e. $\eta(1)=1$), then $\eta$ extends to a graph automorphism of $A$ via the same method as Observation \ref{obs alpha graph autos}.
Now fix $\sigma\in S_{n}$ with $\sigma(1)=i$ for some $i\in\{1,\dots,n\}$ and let $(A:H_{\sigma(1)},\dots,H_{\sigma(n)})$ be an $\mathfrak{S}$-labelling of $A$.
Taking $\tau:=(i \ i-1 \ \dots \ 2 \ 1)\in S_{n}$, we have that $\sigma^{-1}\tau\in S_{n}$ and $\sigma^{-1}(\tau(1))=\sigma^{-1}(i)=1$.
Thus $(A:H_{\sigma(1)},H_{\sigma(2)},\dots,H_{\sigma(n)})=(A:H_{\tau(1)},H_{\tau(2)},\dots,H_{\tau(i)},H_{\tau(i+1)},\dots,H_{\tau(n)})=(A:H_{i},H_{1},\dots,H_{i-1},H_{i+1},\dots,H_{n})$.
\end{obs}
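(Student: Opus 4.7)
The plan is to verify the two assertions of the observation separately, reducing each to direct applications of earlier definitions. The key structural fact about $A$ is that it is a star: $v_{1}$ is the unique vertex of valency $n-1 \ge 2$, while each $v_{j}$ with $j \ge 2$ is a leaf adjacent only to $v_{1}$. Since any graph automorphism must preserve valency, $v_{1}$ is forced to be fixed, which motivates the restriction to permutations $\eta$ fixing $1$.

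For the first sentence, I would mimic the recipe of Observation \ref{obs alpha graph autos} and define $\gamma_{\eta} : A \to A$ by $v_{j} \mapsto v_{\eta(j)}$ for each $j \in \{1,\dots,n\}$; the hypothesis $\eta(1) = 1$ ensures $\gamma_{\eta}(v_{1}) = v_{1}$. This is a bijection on $V(A)$, and since every edge of $A$ is of the form $v_{1} \dash v_{j}$ for some $j \ne 1$, such an edge is sent to $v_{1} \dash v_{\eta(j)}$, which is again an edge of $A$. Hence $\gamma_{\eta}$ is a graph automorphism, and since it preserves the set $\{v_{1},\dots,v_{n}\}$ (and vacuously the empty set of trivial vertices, as $A$ is an $(n,0)$-tree), it is an $(n,0)$-automorphism of $A$.

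For the second sentence, fix $\sigma \in S_{n}$ with $\sigma(1) = i$ and set $\tau := (i\ i-1\ \cdots\ 2\ 1) \in S_{n}$ in cycle notation, so that $\tau(1) = i$. Then $\sigma^{-1}(\tau(1)) = \sigma^{-1}(i) = 1$, so the composition $\sigma^{-1}\tau$ fixes $1$, and by the first part it extends to an $(n,0)$-automorphism of $A$. Applying the equality criterion of Definition \ref{defn equivalent labellings}, in which the conjugacy condition $K_{i} = H_{i}^{h_{i}}$ is trivially satisfied by taking each $h_{i} = 1$ (since both labellings are built from the same family $\{H_{1},\dots,H_{n}\}$), yields the equality $(A : H_{\sigma(1)},\dots,H_{\sigma(n)}) = (A : H_{\tau(1)},\dots,H_{\tau(n)})$. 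Finally, evaluating $\tau$ index by index (i.e.\ $\tau(1) = i$, $\tau(j) = j-1$ for $2 \le j \le i$, and $\tau(j) = j$ for $j > i$) rewrites the right-hand side as $(A : H_{i}, H_{1},\dots,H_{i-1}, H_{i+1},\dots,H_{n})$, as claimed. I do not anticipate any substantive obstacle: once the star structure of $A$ is noted, both parts are essentially routine bookkeeping with the permutation $\tau$.
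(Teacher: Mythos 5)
Your proposal is correct and follows essentially the same route as the paper's inline justification: extend $\eta$ to a graph automorphism of $A$ exactly as in Observation \ref{obs alpha graph autos} (using $\eta(1)=1$ to keep the centre $v_{1}$ fixed), then invoke the equality criterion of Definition \ref{defn equivalent labellings} with all $h_{i}=1$ and evaluate $\tau$ index by index. The only addition is your valency argument explaining why $v_{1}$ must be fixed, which the paper also uses (in the proof of Lemma \ref{lemma equivalent A labellings}) and which correctly motivates the restriction to permutations fixing $1$.
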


\begin{notation}
We will write $(A:H_{i};H_{1},\dots,H_{n})$ for the $\mathfrak{S}$-labelling
\\ \noindent
$(A:H_{i},H_{1},\dots,H_{i-1},H_{i+1},\dots,H_{n})$ of $A$.
Note that for any $\sigma\in S_{n}$ with $\sigma(i)=i$, we have: $$(A:H_{i};H_{1},\dots,H_{n})=(A:H_{i};H_{\sigma(1)},\dots,H_{\sigma(n)})$$
\end{notation}

\begin{lemma}\label{lemma equivalent A labellings}
Two $\mathfrak{S}$-labellings, $(A:H_{i};H_{{1}},\dots,H_{{n}})$ and $(A:K_{k};K_{{1}},\dots,K_{{n}})$, of $A$ are equivalent if and only if $k=i$ and there exist $g\in G$ and $g_{1},\dots,g_{n}\in H_{i}$ so that for each $j\in\{1,\dots,n\}$, $K_{j}=H_{j}^{g_{j}g}$.
\end{lemma}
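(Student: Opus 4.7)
The plan is to unfold Definition~\ref{defn equivalent labellings} in the special case that the underlying tree is $A$, using two structural features of $A$: the central vertex $v_{1}$ is the unique non-leaf vertex (of valency $n-1 \ge 2$), and every edge of $A$ has $v_{1}$ as an endpoint. This mirrors the proof of Lemma~\ref{lemma equivalent alpha labellings}, but with the added twist that the ``centre'' now carries a non-trivial group $H_{i}$, which is responsible for the appearance of the $g_{j} \in H_{i}$.

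For the ``if'' direction, assuming $k = i$ and witnesses $g \in G$, $g_{1},\dots,g_{n} \in H_{i}$ with $K_{j} = H_{j}^{g_{j}g}$, I would set $h_{v_{1}} := g$, and for each leaf $v_{m}$ carrying vertex group $H_{j}$ (so $j \neq i$) set $h_{v_{m}} := g_{j}g$. Orienting every edge of $A$ away from $v_{1}$, we get $K_{v_{1}} = H_{i}^{g_{i}g} = H_{i}^{g}$ (since $g_{i} \in H_{i}$ normalizes $H_{i}$), and $K_{v_{m}} = H_{j}^{g_{j}g}$, while the cocycle condition reads $h_{t(e_{m})}h_{o(e_{m})}^{-1} = g_{j}g \cdot g^{-1} = g_{j} \in H_{i} = H_{o(e_{m})}$, as required.

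For the ``only if'' direction, I would first extract $k = i$: by Observation~\ref{obs A graph autos}, any $(n,0)$-automorphism of $A$ is induced by a permutation fixing $1$, so the requirement that $v_{j} \mapsto v_{\sigma^{-1}(\tau(j))}$ extend to an $(n,0)$-automorphism of $A$ forces $\sigma(1) = \tau(1)$, which by the notational convention means $k = i$. Then, using the orientation and $h_{w}$'s granted by the equivalence, I would set $g := h_{v_{1}}$, giving $K_{i} = H_{i}^{g}$ and hence $g_{i} := 1 \in H_{i}$. For each leaf $v_{m}$ with $H_{v_{m}} = H_{j}$ ($j \neq i$), I would split on the orientation of $e_{m} = v_{1}\dash v_{m}$: if $o(e_{m}) = v_{1}$ there is some $\xi_{m} \in H_{i}$ with $h_{v_{m}} = \xi_{m}g$, so take $g_{j} := \xi_{m}$; if $o(e_{m}) = v_{m}$ there is some $\zeta_{m} \in H_{j}$ with $h_{v_{m}} = \zeta_{m}^{-1}g$, whence $K_{j} = H_{j}^{\zeta_{m}^{-1}g} = H_{j}^{g}$ (as $\zeta_{m}^{-1}$ normalizes $H_{j}$) and $g_{j} := 1$ works.

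The main obstacle is really just bookkeeping around orientation-dependence in the ``only if'' direction, since Definition~\ref{defn equivalent labellings} only guarantees that \emph{some} orientation realizes the equivalence. What makes this painless is the asymmetry between the centre and the leaves: the leaves each have their own distinct factor group, so conjugation by the leaf's own elements can be absorbed, while the centre's group $H_{i}$ supplies exactly the room needed for the left factors $g_{j} \in H_{i}$ in $K_{j} = H_{j}^{g_{j}g}$.
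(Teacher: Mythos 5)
Your proposal is correct and follows essentially the same route as the paper's own proof: extract $k=i$ from the rigidity of $(n,0)$-automorphisms of $A$, anchor $g$ at the central vertex, and split on the orientation of each edge to absorb conjugation by leaf-group elements while collecting the $g_{j}\in H_{i}$ from edges oriented out of the centre. The only cosmetic difference is that the paper sets $h_{v_{1}}:=g_{i}g$ rather than $g$ in the ``if'' direction, which changes the cocycle computation from $g_{j}$ to $g_{j}g_{i}^{-1}$ but nothing else.
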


\begin{proof}
Suppose $(A:H_{i};H_{{1}},\dots,H_{{n}})\simeq(A:K_{k};K_{{1}},\dots,K_{{n}})$.
Then there exist $h_{1},\dots,h_{n}\in G$ so that for all $j\in\{1,\dots,n\}$, $K_{j}=H_{j}^{h_{j}}$.
Moreover, for some fixed orientation of $A$, we have $h_{t(e)}h_{o(e)}^{-1}\in H_{o(e)}$ for every edge $e$ of $A$.
Since any $(n,0)$-automorphism of $A$ must fix the vertex $v_{1}$, we deduce that $k=i$.
To ease notation, we will relabel the vertices of $A$ by 
$w_{j}:= \left\{ \begin{array}{ l l } v_{j+1} & \text{if \ } j<i \\ v_{1} & \text{if \ } j=i \\ v_{j} & \text{if \ } j>i \end{array} \right.$.
%$w_{i}:=v_{1}$, $w_{j}:=v_{j+1}$ for $j<i$, and $w_{j}:=v_{j}$ for $j>i$.
Now each edge of $A$ is of the form $w_{i}\dash w_{j}=:e_{j}$ for $j\in\{1,\dots,n\}-\{i\}$.
If $o(e_{j})=w_{j}$ then $h_{i}h_{j}^{-1}\in H_{j}$ and we have $K_{j}=H_{j}^{h_{j}}=H_{j}^{h_{i}}$.
If $o(e_{j})=w_{i}$ then $h_{j}h_{i}^{-1}\in H_{i}$. Note that $K_{j}=H_{j}^{h_{j}}=H_{j}^{h_{j}h_{i}^{-1}h_{i}}$.
Now taking $g:=h_{i}$, $g_{i}:=1$, and for $j\in\{1,\dots,n\}-\{i\}$,
$g_{j}:= \left\{ \begin{array}{ l l } 1 & \text{if \ } o(e_{j})=w_{j} \\ h_{j}h_{i}^{-1} & \text{if \ } o(e_{j})=w_{i} \end{array} \right.$,
we have that for each $j\in\{1,\dots,n\}$, $K_{j}=H_{j}^{g_{j}g}$, with $g_{1},\dots,g_{n}\in H_{i}$.

Now suppose there exist $g\in G$ and $g_{1},\dots,g_{n}\in H_{i}$ such that for every $j\in\{1,\dots,n\}$, $K_{j}=H_{j}^{g_{j}g}$.
We choose an orientation of $A$ by setting $o(e_{j}):=w_{i}$ (where $e_{j}:= w_{i}\dash w_{j}$) for each $j\in\{1,\dots,n\}-\{i\}$, and we set $h_{j}:=g_{j}g$ for each $j\in\{1,\dots,n\}$.
Now for each $j\in\{1,\dots,n\}-\{i\}$, we have $h_{t(e)}h_{o(e)}^{-1}=(g_{j}g)(g_{i}g)^{-1}=g_{j}gg^{-1}g_{i}^{-1}=g_{j}g_{i}^{-1}\in H_{i}$ since $g_{i}, g_{j}\in H_{i}$.
Thus $(A:H_{i};H_{{1}},\dots,H_{{n}})\simeq(A:K_{i};K_{{1}},\dots,K_{{n}})$.
\end{proof}

%\textcolor{blue}{Consider location of remainder of subsection:}

\begin{lemma}\label{lemma alphas collapse to As}
Let $\class{\alpha_{0}}\in\mathcal{O}_{n}^{(0)}$ be an $\alpha$-graph class with representative $(\alpha:H_{1},\dots,H_{n})$.
Then $\class{T}\in\mathcal{O}_{n}^{(0)}$ is a collapse of $\class{\alpha_{0}}$ if and only $\class{T}$ is an $A$-graph class with representative $(A:H_{i};H_{1},\dots,H_{n})$ for some $i\in\{1,\dots,n\}$.
\end{lemma}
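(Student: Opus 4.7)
The strategy is to exploit the very simple structure of $\alpha$: it is a star with a single trivial vertex $u_1$ at its centre, connected to the $n$ non-trivial leaves $v_1,\dots,v_n$. Every edge of $\alpha$ is therefore collapsible (being incident to $u_1$), but there is only \emph{one} trivial vertex to collapse away, so any non-trivial collapse of an $\alpha$-graph must consist of precisely one edge-collapse. Collapsing the edge $u_1 \dash v_i$ merges $u_1$ and $v_i$ into a single vertex whose group is $\{1\}\ast H_i = H_i$, producing a star on $n$ vertices with $H_i$ at the centre and the remaining $H_j$'s as leaves --- that is, exactly an $A$-graph of the shape $(A:H_i;H_1,\dots,H_n)$.

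For the $(\Leftarrow)$ direction I would simply perform this single edge collapse: starting from the given representative $(\alpha:H_1,\dots,H_n)$ of $[\alpha_0]$ and collapsing $u_1 \dash v_i$ gives $(A:H_i;H_1,\dots,H_n)$ on the nose, so by Observation \ref{obs collapses respect equivalence} its equivalence class is a collapse of $[\alpha_0]$.

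For the $(\Rightarrow)$ direction, suppose $[T]$ is a collapse of $[\alpha_0]$. Unpacking the definition of a collapse of equivalence classes (and using Lemma \ref{lemma equivalent alpha labellings}), we may choose a representative of $[\alpha_0]$ of the form $(\alpha:H_1^g,\dots,H_n^g)$ for some $g\in G$ which collapses through a sequence of edge-collapses to a representative of $[T]$. Since the only trivial vertex of $\alpha$ is $u_1$, the first edge-collapse must collapse some edge $u_1 \dash v_i$; after this step the resulting graph is an $(n,0)$-tree with no trivial vertex, hence no further edge is collapsible. So the collapse terminates after one step at $(A:H_i^g;H_1^g,\dots,H_n^g)$. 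Applying Lemma \ref{lemma equivalent A labellings} with $g_1=\dots=g_n=1$ shows this is equivalent to $(A:H_i;H_1,\dots,H_n)$, which therefore represents $[T]$.

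There is no real obstacle here beyond careful bookkeeping: one must (i) remember that collapses of equivalence classes are witnessed by \emph{some} pair of representatives, not every pair; (ii) use Lemma \ref{lemma equivalent alpha labellings} to put the chosen representative of $[\alpha_0]$ in standard form; and (iii) invoke Lemma \ref{lemma equivalent A labellings} to bring the resulting $A$-labelling back to the advertised form $(A:H_i;H_1,\dots,H_n)$. The geometric content --- that $\alpha$ has exactly one trivial vertex, so collapsing is forced to produce precisely the $A$-graphs indexed by the choice of leaf $v_i$ to merge with $u_1$ --- is the only substantive observation.
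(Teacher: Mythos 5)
Your proposal is correct and follows essentially the same route as the paper's proof: both rest on the observation that $\alpha$ has a single trivial vertex, so every edge is collapsible but any collapse consists of exactly one edge-collapse, which merges $u_{1}$ with some $v_{i}$ to produce $(A:H_{i};H_{1},\dots,H_{n})$. Your additional bookkeeping with Lemma \ref{lemma equivalent alpha labellings} and Lemma \ref{lemma equivalent A labellings} to handle the passage between representatives and equivalence classes is slightly more careful than the paper's treatment (which works directly with a fixed representative and appeals implicitly to Observation \ref{obs collapses respect equivalence}), but it is not a different argument.
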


\begin{proof}
First, note that $\alpha_{0}$ has only one trivial vertex, so any collapse of $\alpha_{0}$ is the result of collapsing precisely one edge in $\alpha_{0}$.
For $i\in\{1,\dots,n\}$, denote the edge in $\alpha_{0}$ between $\{1\}$ and $H_{i}$ by $e_{i}$.
Observe that every edge of $\alpha_{0}$ is of this form, and so every edge of $\alpha_{0}$ is collapsible.
Collapsing an edge $e_{i}$ of $\alpha_{0}$ results in the graph of groups $(A:H_{i};H_{1},\dots,H_{n}\}$.
Hence $T$ is a collapse of $\alpha_{0}$ if and only if $T$ results from collapsing some edge $e_{i}$ of $\alpha_{0}$,
so $T$ is a collapse of $\alpha_{0}$ if and only if $T=(A:H_{i};H_{1},\dots,H_{n})$ for some $i\in\{1,\dots,n\}$.
\end{proof}

\subsection{Construction of the Subcomplex $\mathcal{S}_{n}$} %	----------------------------	S_n Construction	-----------------------------

We are now ready to build our subcomplex $\mathcal{S}_{n}$ of $\mathcal{O}_{n}$.
We will then see how we can move from $\alpha$-graph class to $\alpha$-graph class in $\mathcal{S}_{n}$ by applying elements of $\outs(G)$ found in the $\outs(G)$-stabilisers of $A$-graph classes.
%We describe the complex $\mathcal{C}_{n}$ introduced in \cite[Section 2]{Iveson2024} for a free product $G=G_{1}\ast\dots\ast G_{n}$ with $n\ge3$, on which $\outs(G)$ acts for $\mathfrak{S}=(G_{1},\dots,G_{n})$ (where each $G_{i}$ is assumed to be non-trivial).

\begin{defn}\label{defn Sn}
We define $\mathcal{S}_{n}$ to be the subcomplex of $\mathcal{O}_{n}$ comprising $\alpha$-graph classes and $A$-graph classes, as well as all edges whose endpoints are $\alpha$-graph classes or $A$-graph classes.
That is, $\mathcal{S}_{n}$ is the subcomplex of $\mathcal{O}_{n}$ spanned by $\alpha$-graph classes and $A$-graph classes.
\end{defn}

\begin{obs}\label{obs Sn properties}
By Lemma \ref{lemma alphas collapse to As},
we see that $\mathcal{S}_{n}$ is a 1-dimensional simplicial complex (i.e. a graph).
Moreover, since $\alpha$ is an $(n,1)$-tree and $A$ is an $(n,0)$-tree and $(n,k)$-trees collapse to $(n,k-1)$-trees, then no $\alpha$-graph can collapse to another $\alpha$-graph, and no $A$-graph can collapse to another $A$-graph.
Thus $\mathcal{S}_{n}$ is bipartite between the set of $\alpha$-graph classes and the set of $A$-graph classes.
\end{obs}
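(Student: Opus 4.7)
The observation asserts two things: (a) $\mathcal{S}_n$ has no cells of dimension greater than $1$, and (b) the resulting graph is bipartite between the $\alpha$-graph classes and the $A$-graph classes. Since $\mathcal{O}_n$ is the flag complex on $\mathcal{O}_n^{(1)}$, every cell of $\mathcal{S}_n$ corresponds to a clique of vertices in $\mathcal{S}_n^{(0)}$, which by the definition of the edges of $\mathcal{O}_n$ is the same as a chain in the collapse poset restricted to $\mathcal{S}_n^{(0)}$. So the plan is to analyse which chains of $\alpha$- and $A$-graph classes can occur.

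For (a), I would show that no chain $\class{T_0}\prec\class{T_1}\prec\class{T_2}$ of length three can sit entirely in $\mathcal{S}_n^{(0)}$. The pivotal observation is that an $A$-graph has underlying $(n,0)$-tree, so every vertex group is non-trivial and consequently no edge of an $A$-graph is collapsible; thus an $A$-graph class admits no collapse at all. This rules out an $A$-graph class appearing as $\class{T_0}$ or $\class{T_1}$. Hence both $\class{T_0}$ and $\class{T_1}$ would need to be $\alpha$-graph classes, but Lemma \ref{lemma alphas collapse to As} (or the direct counting of trivial vertices) forbids one $\alpha$-graph class from being a collapse of another, as both live on $(n,1)$-trees with exactly one trivial vertex while any collapse strictly decreases that count.

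For (b), I would verify directly that no edge of $\mathcal{S}_n$ joins two vertices of the same type. Two $A$-graph classes can never be comparable, by the no-collapsible-edge argument above; two $\alpha$-graph classes can never be comparable, by the trivial-vertex-count argument above. Combined with Lemma \ref{lemma alphas collapse to As}, which identifies every collapse of an $\alpha$-graph class as an $A$-graph class, this forces every edge in $\mathcal{S}_n$ to have exactly one $\alpha$-graph endpoint and one $A$-graph endpoint.

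The only step that requires any care is the bookkeeping claim that a collapse strictly reduces the number of trivial vertices. This is immediate from the definition of collapse: a collapsible edge has at least one trivial endpoint, and replacing the edge and its two endpoints by a single vertex whose group is $H\ast\{1\}=H$ either turns a trivial/non-trivial pair into a single non-trivial vertex, or turns a trivial/trivial pair into a single trivial vertex—in both cases reducing the trivial-vertex count by one. Once this bookkeeping is in place, both (a) and (b) are short combinatorial deductions from Lemma \ref{lemma alphas collapse to As}, and the main obstacle—really just a pedantic one—is making sure the poset-to-flag-complex translation is stated cleanly enough to conclude 1-dimensionality from the absence of 3-chains.
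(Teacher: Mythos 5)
Your proposal is correct and follows essentially the same route as the paper: the paper's inline justification likewise rests on Lemma \ref{lemma alphas collapse to As} together with the count of trivial vertices (an $(n,k)$-tree collapses only to $(n,k')$-trees with $k'<k$, so $A$-graphs admit no collapses and $\alpha$-graphs cannot collapse to $\alpha$-graphs), yielding both the absence of $3$-chains and bipartiteness. You merely spell out the clique-equals-chain translation and the trivial-vertex bookkeeping more explicitly than the paper does.
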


\begin{notation}
We denote $\funalpha:=(\alpha:G_{1},\dots,G_{n})$ and for $i\in\{1,\dots,n\}$, $\funA{i}:=(A:G_{i};G_{1},\dots,G_{n})=(A:G_{i},G_{1},\dots,G_{i-1},G_{i+1},\dots,G_{n})$.
\end{notation}

\begin{obs}\label{obs collapses of funalpha}
From Lemma \ref{lemma alphas collapse to As} we see that the collapses of $\class{\funalpha}$ are precisely the vertices $\class{\funA{1}},\dots,\class{\funA{n}}$ in $\mathcal{S}_{n}^{(0)}$.
\end{obs}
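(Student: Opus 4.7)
The plan is that this observation is essentially an immediate specialisation of Lemma \ref{lemma alphas collapse to As} to the particular representative $\funalpha = (\alpha : G_{1},\dots,G_{n})$ of $\class{\funalpha}$, so no substantive new work is required; one just unpacks the notation.

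First I would apply Lemma \ref{lemma alphas collapse to As} directly to the class $\class{\funalpha}$, taking the representative $(\alpha : G_{1},\dots,G_{n})$, i.e.\ setting $H_{i} = G_{i}$ for each $i\in\{1,\dots,n\}$. The lemma then says that $\class{T}\in\mathcal{O}_{n}^{(0)}$ is a collapse of $\class{\funalpha}$ if and only if $\class{T}$ is an $A$-graph class with a representative of the form $(A : G_{i}; G_{1},\dots,G_{n})$ for some $i\in\{1,\dots,n\}$.

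Next I would simply observe that, by the notation just introduced, $\funA{i} = (A : G_{i}; G_{1},\dots,G_{n})$, so the set of collapses of $\class{\funalpha}$ is precisely $\{\class{\funA{1}},\dots,\class{\funA{n}}\}$. Since each $\funA{i}$ is an $\mathfrak{S}$-labelling of the $(n,0)$-tree $A$, each $\class{\funA{i}}$ is by definition an $A$-graph class, hence a vertex of $\mathcal{S}_{n}$ (cf.\ Observation \ref{obs Sn properties}).

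There is no real obstacle here; the observation is purely definitional and serves to fix notation for the list of collapses of the distinguished $\alpha$-graph class. I would therefore keep the proof to a one-sentence citation of Lemma \ref{lemma alphas collapse to As}, or omit it as self-evident from the lemma once the notation $\funalpha$ and $\funA{i}$ is in place.
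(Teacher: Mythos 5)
Your proposal is correct and matches the paper exactly: the observation is stated without proof precisely because it is the specialisation of Lemma \ref{lemma alphas collapse to As} to the representative $(\alpha:G_{1},\dots,G_{n})$, with $\funA{i}=(A:G_{i};G_{1},\dots,G_{n})$ by definition. Nothing further is needed.
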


\begin{obs}\label{obs fun dom for Sn}
The subcomplex $\mathcal{S}_{n}$ inherits the $\outs(G)$ action from $\mathcal{O}_{n}$.
Note that by Lemma \ref{lemma pure symmetric autos give S-free splittings}, the subgraph of $\mathcal{S}_{n}$ comprising the $\alpha$-graph class $\class{\funalpha}$, the $A$-graph classes $\funA{1}$, \dots, $\funA{n}$, and the edges joining them forms a strict fundamental domain for the action of $\outs(G)$ restricted to $\mathcal{S}_{n}$.
In particular, there are precisely $n+1$ $\outs(G)$-orbits of vertices and $n$ $\outs(G)$-orbits of edges in $\mathcal{S}_{n}$.
\end{obs}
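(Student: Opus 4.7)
The plan is to verify three facts: (a) the proposed subgraph $F$ --- with vertices $\class{\funalpha}, \class{\funA{1}}, \dots, \class{\funA{n}}$ and the edges $\class{\funalpha}\dash\class{\funA{i}}$ for $i = 1, \dots, n$ --- genuinely sits inside $\mathcal{S}_{n}$; (b) every vertex and every edge of $\mathcal{S}_{n}$ lies in the $\outs(G)$-orbit of some cell of $F$; and (c) distinct cells of $F$ lie in distinct $\outs(G)$-orbits. The orbit counts in the statement then follow immediately by inspection, since $F$ has $n+1$ vertices and $n$ edges. Part (a) is handed to us by Observation \ref{obs collapses of funalpha}, which tells us that the collapses of $\class{\funalpha}$ are precisely the $\class{\funA{i}}$'s, so each pair $\{\class{\funalpha}, \class{\funA{i}}\}$ does span an edge of $\mathcal{S}_{n}$.

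For part (b), I would begin with an arbitrary $\alpha$-graph class, taking a representative $\alpha_{0} = (\alpha : H_{1}, \dots, H_{n})$. Since $(H_{1}, \dots, H_{n})$ is an $\mathfrak{S}$-free splitting for $G$, Lemma \ref{lemma pure symmetric autos give S-free splittings} supplies some $\psi \in \auts(G)$ with $\psi(G_{i}) = H_{i}$ for each $i$, and Definition \ref{defn action of out(G)} then yields $\class{\funalpha}\cdot [\psi] = \class{(\alpha : \psi(G_{1}), \dots, \psi(G_{n}))} = \class{\alpha_{0}}$. An analogous argument handles an arbitrary $A$-graph class representative $(A : H_{i}; H_{1}, \dots, H_{n})$ as $\class{\funA{i}}\cdot[\psi]$. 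Any edge of $\mathcal{S}_{n}$ connects an $\alpha$-graph class to one of its $A$-graph collapses (using bipartiteness from Observation \ref{obs Sn properties} and Lemma \ref{lemma alphas collapse to As}); since the $\outs(G)$-action respects the collapse relation (Observation \ref{obs action preserves adjacency}), such an edge is the $[\psi]$-image of some $\class{\funalpha}\dash\class{\funA{i}}$.

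Part (c) splits into two cases. The vertex $\class{\funalpha}$ cannot lie in the same orbit as any $\class{\funA{i}}$ because the action preserves the underlying $(n,k)$-tree, and $\alpha$ is an $(n,1)$-tree while $A$ is an $(n,0)$-tree. To separate the $A$-classes, suppose $\class{\funA{i}}\cdot[\psi] = \class{\funA{j}}$ for some $\psi \in \auts(G)$, and write $\psi(G_{k}) = G_{k}^{g_{k}}$. This is the statement that $(A : G_{i}^{g_{i}}; G_{1}^{g_{1}}, \dots, G_{n}^{g_{n}}) \simeq (A: G_{j}; G_{1}, \dots, G_{n})$, and Lemma \ref{lemma equivalent A labellings} forces the central indices to coincide, hence $i = j$. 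Edge distinctness in $F$ then follows because each edge of $F$ is uniquely determined by its $A$-graph endpoint and the action sends $A$-graph classes to $A$-graph classes.

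The delicate point, in my view, is the final deduction in (c): the subscript $i$ in the notation $(A : H_{i}; H_{1}, \dots, H_{n})$ is not a mere label but records that the central vertex group corresponds via the $\mathfrak{S}$-free splitting to the factor $G_{i}$, and Lemma \ref{lemma equivalent A labellings} hinges on the observation that any $(n,0)$-automorphism of $A$ fixes the central vertex $v_{1}$. Without carefully unpacking this, the separation of the $\class{\funA{i}}$-orbits is easy to overlook; everything else in the argument is essentially bookkeeping on top of Lemma \ref{lemma pure symmetric autos give S-free splittings} and the observations about how the action respects collapses.
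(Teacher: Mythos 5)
Your proposal is correct and follows essentially the same route the paper intends: the paper's Observation rests on Lemma \ref{lemma pure symmetric autos give S-free splittings} for transitivity of the action on $\alpha$-graph classes (and on $A$-graph classes with a fixed central index), with orbit-separation implicit in Lemma \ref{lemma equivalent A labellings} and the fact that the action preserves the underlying $(n,k)$-tree. You have simply written out the details the paper leaves to the reader, and your identification of the central-index argument as the one non-trivial point is accurate.
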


\begin{lemma}\label{lemma alphas differ by psi in stab A}%[{\cite[Observation 5.2.1]{Iveson2024}}]
Suppose we have an edge path $\class{\alpha_{0}} \dash \class{A_{1}} \dash \class{\alpha_{1}}$ in $\mathcal{S}_{n}$, where $\class{\alpha_{0}}$ and $\class{\alpha_{1}}$ are $\alpha$-graph classes and $\class{A_{1}}$ is an $A$-graph class.
Then there exists $\Psi\in\outs(G)$ with $\class{A_{1}}\cdot\Psi=\class{A_{1}}$ and $\class{\alpha_{1}}=\class{\alpha_{0}}\cdot\Psi$.
\end{lemma}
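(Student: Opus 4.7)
The plan is to construct an explicit representative $\psi\in\auts(G)$ of the desired outer automorphism $\Psi$, built via the universal property of the free product structure $G=H_{1}\ast\cdots\ast H_{n}$ coming from a representative of $\class{\alpha_{0}}$.

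First I would fix representatives $\alpha_{0}=(\alpha:H_{1},\dots,H_{n})$ and $\alpha_{1}=(\alpha:K_{1},\dots,K_{n})$, absorbing any permutations via Observation \ref{obs alpha graph autos}. By Lemma \ref{lemma alphas collapse to As} the collapse of $\class{\alpha_{0}}$ equal to $\class{A_{1}}$ takes the form $(A:H_{i};H_{1},\dots,H_{n})$ for some index $i$, and likewise the collapse of $\class{\alpha_{1}}$ equal to $\class{A_{1}}$ takes the form $(A:K_{j};K_{1},\dots,K_{n})$ for some $j$. Equivalence of these two $\mathfrak{S}$-labellings via Lemma \ref{lemma equivalent A labellings} forces $j=i$ and supplies $g\in G$ together with $g_{1},\dots,g_{n}\in H_{i}$ such that $K_{\ell}=H_{\ell}^{g_{\ell}g}$ for every $\ell\in\{1,\dots,n\}$.

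Next I would build $\psi$ on the splitting $G=H_{1}\ast\cdots\ast H_{n}$ (which is an $\mathfrak{S}$-free splitting since $\alpha_{0}$ is an $\mathfrak{S}$-labelling, so $H_{\ell}=G_{\ell}^{h_{\ell}}$ for some $h_{\ell}\in G$) by requiring $\psi|_{H_{\ell}}$ to be conjugation by $g_{\ell}$; the universal property extends this to an endomorphism, and the analogous map built from the $g_{\ell}^{-1}$ is a two-sided inverse, so $\psi\in\aut(G)$. Note that $\psi(H_{\ell})=H_{\ell}^{g_{\ell}}$ for each $\ell$, and in particular $\psi(H_{i})=H_{i}$ as $g_{i}\in H_{i}$. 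The main technical step is verifying $\psi\in\auts(G)$: using $G_{\ell}=h_{\ell}H_{\ell}h_{\ell}^{-1}$, a direct calculation yields
\[\psi(G_{\ell})=\psi(h_{\ell})\,H_{\ell}^{g_{\ell}}\,\psi(h_{\ell})^{-1}=G_{\ell}^{h_{\ell}g_{\ell}\psi(h_{\ell})^{-1}},\]
a $G$-conjugate of $G_{\ell}$. I then set $\Psi:=\class{\psi}\in\outs(G)$.

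Finally I would verify the two required properties directly from Definition \ref{defn action of out(G)}. For the first, $\class{\alpha_{0}}\cdot\Psi=\class{(\alpha:H_{1}^{g_{1}},\dots,H_{n}^{g_{n}})}$, which by Lemma \ref{lemma equivalent alpha labellings} (common conjugator $g$) equals $\class{(\alpha:K_{1},\dots,K_{n})}=\class{\alpha_{1}}$. For the second, $\class{A_{1}}\cdot\Psi=\class{(A:H_{i};H_{1}^{g_{1}},\dots,H_{n}^{g_{n}})}$, which by Lemma \ref{lemma equivalent A labellings} (taking the global conjugator $1$ and peripheral conjugators $g_{\ell}\in H_{i}$) equals $\class{A_{1}}$. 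The main obstacle is the $\psi\in\auts(G)$ check, since $\psi$ is defined via the $H$-splitting rather than the $\mathfrak{S}$-splitting and its action on each $G_{\ell}$ must therefore be unpacked through the expression $G_{\ell}=h_{\ell}H_{\ell}h_{\ell}^{-1}$.
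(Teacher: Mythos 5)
Your proof is correct, but it takes a genuinely different route from the paper's. The paper never constructs $\psi$ explicitly: it applies Lemma \ref{lemma pure symmetric autos give S-free splittings} to the two $\mathfrak{S}$-free splittings $H_{1}\ast\dots\ast H_{n}$ and $K_{1}\ast\dots\ast K_{n}$ to obtain $\varphi,\psi\in\auts(G)$ with $H_{j}=\varphi(G_{j})$ and $K_{j}=\psi(G_{j})$, takes $\Psi=[\varphi^{-1}\psi]$ (so that $\class{\alpha_{1}}=\class{\alpha_{0}}\cdot\Psi$ is immediate), and then checks $\class{A_{1}}\cdot\Psi=\class{A_{1}}$ simply by observing that the action carries the representative $(A:H_{i};H_{1},\dots,H_{n})$ of $\class{A_{1}}$ to the representative $(A:K_{i};K_{1},\dots,K_{n})$ of the same class. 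You instead unpack the equivalence $(A:H_{i};H_{1},\dots,H_{n})\simeq(A:K_{i};K_{1},\dots,K_{n})$ via Lemma \ref{lemma equivalent A labellings} to extract the data $g\in G$ and $g_{1},\dots,g_{n}\in H_{i}$, and build $\Psi$ by hand; your version is longer but yields an explicit form for $\Psi$ (visibly a product of Whitehead-type conjugations in the $H$-coordinates), which in effect anticipates the computation the paper defers to Proposition \ref{prop stab funAi}. One small point to tighten: the ``analogous map built from the $g_{\ell}^{-1}$'' must be defined on the splitting $H_{1}^{g_{1}}\ast\dots\ast H_{n}^{g_{n}}$, not on $H_{1}\ast\dots\ast H_{n}$ (on the latter, composing with $\psi$ conjugates each factor by $g_{\ell}^{-1}g_{i}g_{\ell}g_{i}^{-1}$ rather than fixing it), so you should record that $H_{1}^{g_{1}}\ast\dots\ast H_{n}^{g_{n}}=G$ is indeed a free splitting --- this follows by applying the inner automorphism $x\mapsto gxg^{-1}$ to the free splitting $K_{1}\ast\dots\ast K_{n}=G$, and plays exactly the role that the hypothesis $G_{1}^{g_{1}}\ast\dots\ast G_{n}^{g_{n}}=G$ plays in the paper's proof of Lemma \ref{lemma pure symmetric autos give S-free splittings}.
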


%\edit{Reprove this!}

\begin{proof}
Suppose $\class{\alpha_{0}}$ has representative $\alpha_{0}=(\alpha:H_{1},\dots,H_{n})$ and $\class{\alpha_{1}}$ has representative $\alpha_{1}=(\alpha:K_{1},\dots,K_{n})$.
Then $H_{1}\ast\dots\ast H_{n}$ and $K_{1}\ast\dots\ast K_{n}$ are both $\mathfrak{S}$-free splittings for $G$, so by Lemma \ref{lemma pure symmetric autos give S-free splittings}, there exist $\varphi,\psi\i\auts(G)$ so that for each $j\in\{1,\dots,n\}$, $H_{j}=\varphi(G_{j})$ and $K_{j}=\psi(G_{j})$ (hence $K_{j}=\psi(\varphi^{-1}(H_{j}))$).
In particular, $\class{\alpha_{1}}=\class{\alpha_{0}}\cdot[\varphi^{-1}\psi]$ where $[\varphi^{-1}\psi]\in\outs(G)$ is the outer automorphism class containing $\varphi^{-1}\psi$.
Now by Observation \ref{obs collapses respect equivalence}, since $\class{A_{1}}$ is a collapse of both $\class{\alpha_{0}}$ and $\class{\alpha_{1}}$,
we have $A_{1}\simeq(A:H_{i};H_{1},\dots,H_{n})\simeq(A:K_{i};K_{1},\dots,K_{n})$ for some $i\in\{1,\dots,n\}$.
Hence
\begin{align*}
\class{A_{1}} \cdot [\varphi^{-1}\psi]	&	=	\class{ ( A:H_{i};H_{1},\dots,H_{n} ) } \cdot [\varphi^{-1}\psi]						\\
						&	=	\class{ ( A:\psi(\varphi^{-1}(H_{i}));\psi(\varphi^{-1}(H_{1})),\dots,\psi(\varphi^{-1}(H_{n})) ) }	\\
						&	=	\class{ ( A:K_{i};K_{1},\dots,K_{n} ) }										\\
						&	=	\class{A_{1}}
\end{align*}
\end{proof}

%	---------------------------------------------------------------------------------	Path Connected	------------------------------------------------------------

\section{Connectedness of the Subcomplex $\mathcal{S}_{n}$}\label{section connectedness}

%\textcolor{blue}{(Discussion)}
%\textcolor{Green}{point of section is to show prop \ref{prop alpha-A paths} i.e. that paths exist in $\mathcal{S}_{n}$. fun corollary is that $\mathcal{S}_{n}$ is path connected.}
Recall that $G=G_{1}\ast\dots\ast G_{n}$, $\mathfrak{S}=(G_{1},\dots,G_{n})$, and $\funalpha=(\alpha:G_{1},\dots,G_{n})$.
The main goal of this Section is to prove Proposition \ref{prop alpha-A paths} --- that is, given an arbitrary $\alpha$-graph class $\class{\alpha_{0}}$ in $\mathcal{S}_{n}$, we can find a path in $\mathcal{S}_{n}$ from $\class{\alpha_{0}}$ to $\class{\funalpha}$.
An immediate corollary of this is that $\mathcal{S}_{n}$ is path connected (Corollary \ref{cor Sn path connected}).
Proposition \ref{prop alpha-A paths} will be key to proving our main theorem, Theorem \ref{main theorem}.

\subsection{The Universal Cover of $\funalpha$}

The majority of the arguments in this Section will take place in the universal cover of $\funalpha$.
We will thus begin by establishing some notation regarding this, as well as some useful (but technical) lemmas.

\begin{notation}
Let $U:=\{1\}$ be the vertex group in $\funalpha$ for the vertex $u_{1}$ of the graph $\alpha$ (see Figure \ref{fig alpha graph shape}).
From Definition \ref{defn universal cover}, we see that the universal cover of $\funalpha$, denoted $\tree{\funalpha}$, has vertex set
$\displaystyle{ V(\tree{\funalpha})=\smashoperator[lr]{ \bigsqcup_{ \substack{ g\in G \\ i\in\{1,\dots,n\} } } } G_{i} \coset g \ \sqcup \smashoperator[r]{ \bigsqcup_{ g\in G } } U \coset g }$
and edge set
$\displaystyle{ E(\tree{\funalpha})=\smashoperator[lr]{ \bigsqcup_{ \substack{ g\in G \\ j\in\{1,\dots,n\} } } } e_{j} \coset g }$
where for each $j\in\{1,\dots,n\}$, $e_{j}$ is the edge joining $\{1\}=U$ and $G_{j}$ in $\funalpha$.
Note then that $e_{j}\coset g$ has endpoints $U\coset g$ and $G_{j}\coset g$.
We will call vertices of the form $G_{i}\coset g$ `$C$-vertices' (`$C$' for coset) and vertices of the form $U\coset g$ `$U$-vertices'.
Given $H\in\{ U,G_{1},\dots,G_{n}\}$, we will often write $H\coset 1=H$.
\end{notation}

\begin{obs}\label{obs tree of funalpha properties}
Note that $\tree{\funalpha}$ is bipartite between the set of $C$-vertices and the set of $U$-vertices,
that is, every neighbour of a $U$-vertex is a $C$-vertex, and vice versa.
Every edge of $\tree{\funalpha}$ has trivial $G$-stabiliser, and so does every $U$-vertex.
A $C$-vertex $G_{i}\coset g$ of $\tree{\funalpha}$ has stabiliser $G_{i}^{g}$ in $G$.
Note that every $U$-vertex has valency exactly $n$; in particular the neighbours of a vertex $U\coset g$ of $\tree{\funalpha}$ are precisely the vertices $G_{1}\coset g, \dots, G_{n}\coset g$.
A $C$-vertex $G_{i}\coset g$ has valency equal to $|G_{i}|$ --- this will often be infinite.
\end{obs}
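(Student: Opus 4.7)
The plan is to derive each of the six listed properties directly from Definition~\ref{defn universal cover} (specialised to $T = \alpha$ and $H_i = G_i$) together with the observation immediately following that definition. Since $\funalpha$ has only trivial edge groups and only one trivial vertex $u_1$, the bookkeeping is light and everything reduces to inspection of the explicit vertex and edge sets recorded in the notation preceding the statement.

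First I would verify bipartiteness: every edge of $\tree{\funalpha}$ has the form $e_j \coset g$, and by construction its endpoints are the $U$-vertex $U \coset g$ and the $C$-vertex $G_j \coset g$. This establishes the bipartition and, holding $g$ fixed while letting $j$ vary, exhibits the $n$ neighbours of $U \coset g$ as $G_1 \coset g, \dots, G_n \coset g$. The stabiliser claims then follow from the earlier observation: the edge groups of $\funalpha$ being trivial forces every edge of $\tree{\funalpha}$ to have trivial $G$-stabiliser, and the vertex stabiliser of $H_w \coset g$ is $H_w^g$, which specialises to $\{1\}^g = \{1\}$ for a $U$-vertex and to $G_i^g$ for a $C$-vertex $G_i \coset g$. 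For the valency counts, I would count edges incident to a fixed vertex $H_w \coset g$: an edge $e_j \coset h$ is incident iff $H_w \in \{H_{u_1}, H_{v_j}\}$ and $H_w h = H_w g$. At a $U$-vertex the first condition lets $j$ be arbitrary and the second forces $h = g$ (since $U = \{1\}$), giving one incident edge per $j$ and hence valency $n$; at a $C$-vertex $G_i \coset g$ the first condition forces $j = i$ and the second forces $h \in G_i g$, yielding $|G_i|$ incident edges.

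The only subtlety, and thus the main obstacle if any, is keeping straight that the edge set of $\tree{\funalpha}$ is a literal disjoint union indexed by pairs $(e, g) \in E(\alpha) \times G$ rather than by cosets of some edge group, so that edges $e_i \coset h$ and $e_i \coset h'$ with $h \neq h'$ (both $h, h' \in G_i g$) are genuinely distinct even though they share the $C$-endpoint $G_i \coset g$; their $U$-endpoints $U \coset h = \{h\}$ distinguish them. Once this is kept straight, the count $|G_i|$ (which may well be infinite) for the valency of a $C$-vertex falls out immediately.
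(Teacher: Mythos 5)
Your proposal is correct and follows exactly the route the paper intends: the Observation is stated without proof precisely because each claim falls out of Definition~\ref{defn universal cover} (with $T=\alpha$, $H_{i}=G_{i}$) and the remark on the $G$-action following it, which is what you do. Your flagged subtlety --- that edges form a genuine disjoint union indexed by $E(\alpha)\times G$ since the edge groups are trivial, so $e_{i}\coset h$ and $e_{i}\coset h'$ with $h\ne h'$ in $G_{i}g$ are distinct edges sharing the $C$-endpoint but distinguished by their $U$-endpoints --- is exactly the point needed to get the valency $|G_{i}|$.
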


We sketch part of the universal cover $\tree{\funalpha}$ of $\funalpha$ in Figure \ref{fig tree of fun alpha}.

\begin{figure}[h]
\centering
\begin{tikzpicture}
% central alpha
[ fundom/.pic={
\draw[thick,->-] (0,0) -- (0.259,0.966);
\draw[thin] (0.153,0.129) -- (0.613,0.514);
\draw[thin] (0.199,0.018) -- (0.797,0.070);
\draw[thick,->-] (0,0) -- (0.866,-0.5);
\draw[thin] (0.068,-0.188) -- (0.274,-0.752);
\draw[thin] (-0.068,-0.188) -- (-0.274,-0.752);
\draw[thick,->-] (0,0) -- (-0.866,-0.5);
\draw[thin] (-0.199,0.018) -- (-0.797,0.070);
\draw[thin] (-0.153,0.129) -- (-0.613,0.514);
\draw[thick,->-] (0,0) -- (-0.259,0.966);
\filldraw[Green] (0,0) circle [radius=0.075cm];
\filldraw[black] (0.259,0.966) circle [radius=0.075cm];
\filldraw[black] (0.689,0.579) circle [radius=0.05cm];
\filldraw[black] (0.831,0.344) circle [radius=0.05cm];
\filldraw[black] (0.897,0.078) circle [radius=0.05cm];
\filldraw[Blue] (0.866,-0.5) circle [radius=0.075cm];
\filldraw[black] (0.308,-0.846) circle [radius=0.05cm];
\filldraw[black] (0,-0.9) circle [radius=0.05cm];
\filldraw[black] (-0.308,-0.846) circle [radius=0.05cm];
\filldraw[Red] (-0.866,-0.5) circle [radius=0.075cm];
\filldraw[black] (-0.897,0.078) circle [radius=0.05cm];
\filldraw[black] (-0.831,0.344) circle [radius=0.05cm];
\filldraw[black] (-0.689,0.579) circle [radius=0.05cm];
\filldraw[black] (-0.259,0.966) circle [radius=0.075cm];
}]
% copies of alpha
\draw (0,0) pic[rotate=0] {fundom}; % U
\draw (1.866,-0.5) pic[rotate=210] {fundom}; % U.x
\draw (1.866,1.232) pic[rotate=90] {fundom}; % U.yx
\filldraw[white] (2.766,1.232) circle [radius=0.06cm];
\draw (3.366,0.366) pic[rotate=330] {fundom}; % U.zx
\filldraw[white] (4.266,0.25) circle [radius=0.06cm];
\draw (-0.866,-1.5) pic[rotate=240] {fundom}; % U.y
\draw (-2.232,-2.866) pic[rotate=90] {fundom}; % U.xy
\filldraw[white] (-1.332,-2.866) circle [radius=0.06cm];
\draw (-1.732,0) pic[rotate=120] {fundom}; % U.z
\draw (-2.232,1.866) pic[rotate=330] {fundom}; % U.xz
\filldraw[white] (-1.332,1.75) circle [radius=0.06cm];
% labels
\node[Green,fill=white,inner sep=0pt] at (0,-0.3) {\small{$U$}}; 							% U
\node[black] at (0.35,1.2) {\small{$G_{1}$}};
\node[Blue] at (0.866,-0.25) {\small{$G_{i}$}};
\node[Red] at (-0.866,-0.225) {\small{$G_{j}$}};
\node[black] at (-0.25,1.2) {\small{$G_{n}$}};
\node[Green,fill=white,inner sep=0pt] at (2.275,-0.45) {\small{$U\coset\textcolor{Blue}{x}$}}; 				% U.x
\node[black] at (2.1,-1.7) {\small{$G_{1}\coset\textcolor{Blue}{x}$}};
\node[Red] at (2.65,0.55) {\small{$G_{j}\coset\textcolor{Blue}{x}$}};
\node[black] at (2.95,-1.45) {\small{$G_{n}\coset\textcolor{Blue}{x}$}};
\node[Green,fill=white,inner sep=0pt] at (2.375,1.25) {\small{$U\coset\textcolor{Red}{y}\textcolor{Blue}{x}$}};	 	% U.yx
\node[black] at (0.925,1.75) {\small{$G_{1}\coset\textcolor{Red}{y}\textcolor{Blue}{x}$}};
\node[Blue] at (2.85,2.25) {\small{$G_{i}\coset\textcolor{Red}{y}\textcolor{Blue}{x}$}};
\node[black] at (0.925,0.8) {\small{$G_{n}\coset\textcolor{Red}{y}\textcolor{Blue}{x}$}};
\node[Green,fill=white,inner sep=0pt] at (3.875,0.325) {\small{$U\coset\textcolor{Red}{z}\textcolor{Blue}{x}$}};	 	% U.zx
\node[black] at (4.6,1.2) {\small{$G_{1}\coset\textcolor{Red}{z}\textcolor{Blue}{x}$}};
\node[Blue] at (4.25,-0.7625) {\small{$G_{i}\coset\textcolor{Red}{z}\textcolor{Blue}{x}$}};
\node[black] at (3.9,1.6) {\small{$G_{n}\coset\textcolor{Red}{z}\textcolor{Blue}{x}$}};
\node[Green,fill=white,inner sep=0pt] at (-1.25,-1.35) {\small{$U\coset\textcolor{Red}{y}$}}; 				% U.y
\node[black] at (0.25,-2.4) {\small{$G_{1}\coset\textcolor{Red}{y}$}};
\node[Blue] at (-1.85,-1.7) {\small{$G_{i}\coset\textcolor{Red}{y}$}};
\node[black] at (0.575,-1.9) {\small{$G_{n}\coset\textcolor{Red}{y}$}};
\node[Green,fill=white,inner sep=0pt] at (-1.725,-2.85) {\small{$U\coset\textcolor{Blue}{x}\textcolor{Red}{y}$}}; 	% U.xy
\node[black] at (-3.75,-2.55) {\small{$G_{1}\coset\textcolor{Blue}{x}\textcolor{Red}{y}$}};
\node[Red] at (-1.3,-4) {\small{$G_{j}\coset\textcolor{Blue}{x}\textcolor{Red}{y}$}};
\node[black] at (-3.775,-3.1) {\small{$G_{n}\coset\textcolor{Blue}{x}\textcolor{Red}{y}$}};
\node[Green,fill=white,inner sep=0pt] at (-2.1,0.15) {\small{$U\coset\textcolor{Red}{z}$}}; 				% U.z
\node[black] at (-3.175,-0.25) {\small{$G_{1}\coset\textcolor{Red}{z}$}};
\node[Blue] at (-1.3,1.05) {\small{$G_{i}\coset\textcolor{Red}{z}$}};
\node[black] at (-2.475,-0.9) {\small{$G_{n}\coset\textcolor{Red}{z}$}};
\node[Green,fill=white,inner sep=0pt] at (-1.725,1.825) {\small{$U\coset\textcolor{Blue}{x}\textcolor{Red}{z}$}}; 	% U.xz
\node[black] at (-1.025,2.725) {\small{$G_{1}\coset\textcolor{Blue}{x}\textcolor{Red}{z}$}};
\node[Red] at (-3.775,1.85) {\small{$G_{j}\coset\textcolor{Blue}{x}\textcolor{Red}{z}$}};
\node[black] at (-1.6,3.05) {\small{$G_{n}\coset\textcolor{Blue}{x}\textcolor{Red}{z}$}};
\end{tikzpicture}
\caption{A Small Part of $\tree{\funalpha}$, with $x\in G_{i}$ and $y,z\in G_{j}$}
\label{fig tree of fun alpha}
\end{figure}

\begin{obs}\label{obs malnormalcy and coset equivalence}
Fix $i\in\{1,\dots,n\}$ and let $g_{i},h\in G=G_{1}\ast\dots\ast G_{n}$.
As cosets we have that $G_{i}\coset g_{i}h=G_{i}\coset g_{i} \Leftrightarrow G_{i}\coset g_{i}hg_{i}^{-1} \Leftrightarrow g_{i}hg_{i}^{-1}\in G_{i} \Leftrightarrow h\in g_{i}^{-1} G_{i} g_{i}=G_{i}^{g_{i}}$.
Moreover, since $G_{i}$ is malnormal in $G$ (as it is a free factor), we have that $G_{i}^{g_{i}h}=G_{i}^{g_{i}} \Leftrightarrow G_{i}^{g_{i}hg_{i}^{-1}}=G_{i} \Leftrightarrow g_{i}hg_{i}^{-1}\in G_{i} \Leftrightarrow h\in g_{i}^{-1} G_{i} g_{i}=G_{i}^{g_{i}}$.
Thus $G_{i}\coset g_{i}h=G_{i}\coset g_{i}$ as cosets if and only if $G_{i}^{g_{i}h}=G_{i}^{g_{i}}$ as subgroups of $G$.
\end{obs}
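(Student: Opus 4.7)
The plan is to treat the statement as two independent biconditional chains, each terminating at the common condition $h \in G_{i}^{g_{i}}$, and then combine them. The first chain is purely formal coset manipulation; the second is where malnormality does the actual work. I would handle them in that order since the second is the only non-trivial step.

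For the coset chain, I would invoke the standard fact that two right cosets $G_{i}\coset a$ and $G_{i}\coset b$ coincide if and only if $ab^{-1}\in G_{i}$. Taking $a=g_{i}h$ and $b=g_{i}$ immediately gives the equivalence $G_{i}\coset g_{i}h = G_{i}\coset g_{i} \Leftrightarrow g_{i}hg_{i}^{-1}\in G_{i}$, and then rearranging yields $h\in g_{i}^{-1}G_{i}g_{i}=G_{i}^{g_{i}}$.

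For the subgroup chain, I would first reduce $G_{i}^{g_{i}h}=G_{i}^{g_{i}}$ to $G_{i}^{g_{i}hg_{i}^{-1}}=G_{i}$ by conjugating both sides by $g_{i}^{-1}$ (equivalently, using $H^{x}=H^{y}\Leftrightarrow H^{xy^{-1}}=H$). The core step is then the auxiliary claim that $G_{i}^{x}=G_{i}\Leftrightarrow x\in G_{i}$. The reverse direction is immediate since $G_{i}$ is closed under self-conjugation. For the forward direction, I would observe that $G_{i}^{x}=G_{i}$ implies $G_{i}\cap G_{i}^{x}=G_{i}$, which is non-trivial by the standing assumption that the factor groups $G_{i}$ are non-trivial; since $G_{i}$ is a free factor of the free product $G$, it is malnormal in $G$, so a non-trivial intersection of $G_{i}$ with a conjugate $G_{i}^{x}$ forces $x\in G_{i}$. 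Applying this with $x=g_{i}hg_{i}^{-1}$ gives $g_{i}hg_{i}^{-1}\in G_{i}$, i.e.\ $h\in G_{i}^{g_{i}}$.

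Combining the two chains, both coset equality and subgroup equality are equivalent to the single condition $h\in G_{i}^{g_{i}}$, and hence to each other. The only real obstacle is the malnormality step: I need to remember to quote malnormality of free factors as a known fact (rather than re-deriving it) and to make explicit use of the non-triviality of $G_{i}$ that is hypothesised throughout the paper; without non-triviality, the intersection $G_{i}\cap G_{i}^{x}$ could be $\{1\}$ trivially and the forward implication would fail.
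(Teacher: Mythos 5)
Your proposal is correct and follows essentially the same route as the paper, which justifies the observation inline via the standard right-coset equality criterion and the malnormality of the free factor $G_{i}$. Your explicit remark that the non-triviality of $G_{i}$ is needed for the malnormality step is a welcome extra detail, but it does not change the argument.
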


\begin{notation}
We will write $[P,Q]$ for the geodesic in $\tree{\funalpha}$ from the vertex $P$ to the vertex $Q$.
We write $[P_{1},P_{2},\dots,P_{q}]$ to mean that the vertices $P_{2}, \dots, P_{q-1}$ lie on the geodesic $[P_{1},P_{q}]$, in the order given. We allow the possibility that $P_{i}=P_{i+1}$ for $i\in\{1,\dots,q-1\}$.
The length of a geodesic $[P,Q]$ is simply the number of edges it contains (equivalently, one less than the number of vertices it contains), written $|[P,Q]|$.
\end{notation}

\begin{lemma}\label{lemma geodesics in tree of funalpha}
Let $G_{j}\coset g_{j}$ be a $C$-vertex in $\tree{\funalpha}$ (so $j\in\{1,\dots,n\}$ and $g_{j}\in G$) and suppose $h\in G_{k}^{g_{k}}$ is non-trivial, for some $k\in\{1,\dots,n\}$ and some $g_{k}\in G$ with $G_{k}^{g_{k}} \ne G_{j}^{g_{j}}$.
Then the $C$-vertex $G_{k}\coset g_{k}$ in $\tree{\funalpha}$ lies halfway along the $\tree{\funalpha}$-geodesic $[G_{j}\coset g_{j}, G_{j}\coset g_{j}h]$.
\end{lemma}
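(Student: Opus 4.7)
The plan is to reinterpret the claim via the isometric $G$-action on $\tree{\funalpha}$. Since $h \in G_k^{g_k}$, which by Observation \ref{obs tree of funalpha properties} is exactly the $G$-stabiliser of the $C$-vertex $G_k \coset g_k$, the action of $h$ fixes $G_k \coset g_k$ while sending $G_j \coset g_j$ to $G_j \coset g_j h$. The problem thus reduces to showing that the $h$-fixed vertex $G_k \coset g_k$ is the midpoint of the geodesic $[G_j \coset g_j,\, G_j \coset g_j h]$.

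First I would pin down the fixed-point set of $h$ on $\tree{\funalpha}$. The fixed set of any isometry of a simplicial tree is a subtree, and by Observation \ref{obs tree of funalpha properties} every edge of $\tree{\funalpha}$ has trivial $G$-stabiliser, so the non-trivial element $h$ fixes no edge. Hence its fixed set is a single vertex, which must be $G_k \coset g_k$. Next, since vertex stabilisers are well-defined functions of the vertex, the hypothesis $G_j^{g_j} \ne G_k^{g_k}$ forces $G_j \coset g_j \ne G_k \coset g_k$; in particular $h$ does not fix $G_j \coset g_j$, so $G_j \coset g_j h \ne G_j \coset g_j$ and the geodesic $[G_j \coset g_j,\, G_j \coset g_j h]$ is non-degenerate.

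Finally, I would apply a standard tree-geometry argument. Consider the concatenated path $[G_j \coset g_j,\, G_k \coset g_k] \cup [G_k \coset g_k,\, G_j \coset g_j h]$; because $h$ is an isometry fixing $G_k \coset g_k$ and mapping $G_j \coset g_j$ to $G_j \coset g_j h$, its total length is $2\,|[G_j \coset g_j,\, G_k \coset g_k]|$. This concatenation is itself the desired geodesic — making $G_k \coset g_k$ the midpoint — unless the two segments share a common edge at $G_k \coset g_k$, and this is the only real obstacle. But since $h$ sends $[G_j \coset g_j,\, G_k \coset g_k]$ onto $[G_j \coset g_j h,\, G_k \coset g_k]$, coincidence of their edges incident to $G_k \coset g_k$ would produce an $h$-fixed edge, contradicting the triviality of edge stabilisers in $\tree{\funalpha}$. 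With this no-backtracking step in hand, the concatenation is a genuine geodesic and $G_k \coset g_k$ sits exactly halfway along it, completing the proof.
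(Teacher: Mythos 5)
Your proposal is correct and follows essentially the same route as the paper: both decompose $[G_{j}\coset g_{j}, G_{j}\coset g_{j}h]$ as the concatenation of $[G_{j}\coset g_{j}, G_{k}\coset g_{k}]$ with its $h$-image and then rule out backtracking at $G_{k}\coset g_{k}$. The only (cosmetic) difference is that you exclude backtracking via triviality of edge stabilisers, whereas the paper does so by noting that the adjacent $U$-vertices $U\coset y$ and $U\coset yh$ are distinct because $U$-vertices have trivial stabiliser.
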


\begin{rem}
This Lemma holds more generally whenever we have a tree equipped with an edge-free action and an elliptic element.
Nevertheless, it will be useful to familiarise ourselves with the details of the tree $\tree{\funalpha}$ by proving it in this specific case.
\end{rem}

\begin{proof}
We consider the $\tree{\funalpha}$-geodesic $[G_{j}\coset g_{j},G_{k}\coset g_{k}]$.
Since $G_{k}^{g_{k}} \ne G_{j}^{g_{j}}$ then by Observation \ref{obs malnormalcy and coset equivalence}, $G_{j}\coset g_{j}\ne G_{k}\coset g_{k}$, and are thus distinct $C$-vertices in $\tree{\funalpha}$.
Due to the bipartite structure of $\tree{\funalpha}$, there is some  $U$-vertex $U\coset x$ (where $x\in G$) on the geodesic $[G_{j}\coset g_{j},G_{k}\coset g_{k}]$.
Let $U\coset y$ be the closest such vertex to $G_{k}\coset g_{k}$ (i.e. its neighbour).

Note that since $h\in G_{k}^{g_{k}}$ then $h$ stabilises $G_{k}\coset g_{k}$, that is, $G_{k}\coset g_{k}\cdot h = G_{k}\coset g_{k}h= G_{k}\coset g_{k}$.
By the action of $G$ on $\tree{\funalpha}$, we have $[G_{j}\coset g_{j},G_{k}\coset g_{k}]\cdot h = \left[{G_{j}\coset g_{j}}\cdot h,{G_{k}\coset g_{k}}\cdot h\right] = [G_{j}\coset g_{j}h,G_{k}\coset g_{k}]$.
Moreover, the vertex $U\coset yh$ in $\tree{\funalpha}$ lies on $[G_{j}\coset g_{j}h,G_{k}\coset g_{k}]$ and is adjacent to $G_{k}\coset g_{k}$.
Since each $U$-vertex has trivial stabiliser (and $h$ is assumed to be non-trivial) then $U\coset y$ and $U\coset yh$ are distinct vertices in $\tree{\funalpha}$.
Thus $[G_{j}\coset g_{j},G_{k}\coset g_{k}] \cap [G_{j}\coset g_{j}h,G_{k}\coset g_{k}] = \{ G_{k}\coset g_{k} \}$.

We conclude that the $\tree{\funalpha}$-geodesic $[G_{j}\coset g_{j}, G_{j}\coset g_{j}h]$ is the concatenation of the geodesics $[G_{j}\coset g_{j},G_{k}\coset g_{k}]$ and $[G_{j}\coset g_{j},G_{k}\coset g_{k}]\cdot h=[G_{k}\coset g_{k},G_{j}\coset g_{j}h]$.
Hence $G_{k}\coset g_{k}$ lies halfway along $[G_{j}\coset g_{j}, G_{j}\coset g_{j}h]$, as required.
\end{proof}

The following Lemma is a technical result required for the proof of Lemma \ref{lemma spokes fold} (which in turn is required for our main ingredient of Proposition \ref{prop alpha-A paths}, Lemma \ref{lemma volume is decreasable}).

\begin{lemma}\label{lemma P(a,b)}
Suppose $G_{1}^{g_{1}}\ast\dots\ast G_{n}^{g_{n}}$ is an $\mathfrak{S}$-free splitting of $G$.
Let $G_{k_{0}}\coset g_{k_{0}}h_{1}\dots h_{m}$ be a vertex in $\tree{\funalpha}$ where for each $i\in\{1,\dots,m\}$, $h_{i}$ belongs to a factor group $G_{k_{i}}^{g_{k_{i}}}$ such that $k_{i}\ne k_{i-1}$ (that is, $h_{1}\dots h_{m}$ is a reduced word with respect to the splitting $G_{1}^{g_{1}}\ast\dots\ast G_{n}^{g_{n}}$, and $h_{1}\not\in G_{k_{0}}^{g_{k_{0}}}$).
Then for some $j,l\in \{0,\dots,m\}$ with $k_{j}\ne k_{l}$ we have that the vertex $G_{k_{j}}\coset g_{k_{j}}$ in $\tree{\funalpha}$ lies on the $\tree{\funalpha}$-geodesic $[G_{k_{0}}\coset g_{k_{0}}h_{1}\dots h_{m}, G_{k_{l}}\coset g_{k_{l}}]$, that is, we have $[G_{k_{0}}\coset g_{k_{0}}h_{1}\dots h_{m}, G_{k_{j}}\coset g_{k_{j}}, G_{k_{l}}\coset g_{k_{l}}]$.
\end{lemma}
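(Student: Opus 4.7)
The plan is induction on $m$. The base case $m=1$ is immediate from Lemma \ref{lemma geodesics in tree of funalpha}: applied with starting vertex $G_{k_0}\coset g_{k_0}$ and element $h_1\in G_{k_1}^{g_{k_1}}$ (the required $G_{k_1}^{g_{k_1}}\ne G_{k_0}^{g_{k_0}}$ is immediate from $k_1\ne k_0$), it places $G_{k_1}\coset g_{k_1}$ at the midpoint of $[G_{k_0}\coset g_{k_0}, G_{k_0}\coset g_{k_0}h_1]$, so $(j,l)=(1,0)$ works.

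For the inductive step $m\ge 2$, I abbreviate $W_i:=G_{k_i}\coset g_{k_i}$ and set $V_0:=G_{k_0}\coset g_{k_0}h_1\dots h_m$ and $V_0':=G_{k_0}\coset g_{k_0}h_1\dots h_{m-1}$. The inductive hypothesis applied to the reduced subword $h_1\dots h_{m-1}$ yields indices $j', l'\in\{0,\dots,m-1\}$ with $k_{j'}\ne k_{l'}$ and $W_{j'}$ on $[V_0', W_{l'}]$. A second application of Lemma \ref{lemma geodesics in tree of funalpha}, this time at the starting vertex $V_0'$ with element $h_m$, places $W_m$ at the midpoint of $[V_0', V_0]$. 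The needed hypothesis $G_{k_m}^{g_{k_m}}\ne G_{k_0}^{g_{k_0}h_1\dots h_{m-1}}$ is verified using malnormality of free factors together with uniqueness of normal forms: equality would force $h_1\dots h_{m-1}$ into $G_{k_0}^{g_{k_0}}$, contradicting that it is a nontrivial reduced word whose first syllable lies outside $G_{k_0}^{g_{k_0}}$.

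To extract a valid pair $(j,l)$ for $V_0$ from these ingredients, I analyse the median $M$ in $\tree{\funalpha}$ of the tripod $(V_0, V_0', W_{l'})$. The identity $[V_0, W_{l'}]=[V_0, M]\cup[M, W_{l'}]$, combined with the facts that $W_m$ lies on $[V_0, V_0']=[V_0, M]\cup[M, V_0']$ (at its midpoint) and that $W_{j'}$ lies on $[V_0', W_{l'}]=[V_0', M]\cup[M, W_{l'}]$, splits the analysis into a small number of sub-cases. In most of them one of $W_m$ or $W_{j'}$ is automatically on $[V_0, W_{l'}]$, yielding the required pair directly; in the remaining case, where both $W_m$ and $W_{j'}$ lie on the segment $[V_0', M]$ but at distinct points, their linear ordering on that segment places one on a geodesic from $V_0$ to the other.

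I expect the main obstacle to be the degenerate sub-case where $W_m=W_{j'}$ (i.e.\ $k_m=k_{j'}$) and both lie strictly between $V_0'$ and $M$: here no pair drawn from $\{j', l', m\}$ alone places $W_j$ on a geodesic $[V_0, W_l]$ with $k_j\ne k_l$, since the relevant $W$ vertices lie on the ``wrong side'' of $M$ from $V_0$. I would handle this residual configuration by exploiting the flexibility in the inductive hypothesis---either reselecting $(j', l')$ so that $k_{j'}\ne k_m$ (when the structure permits), or invoking the induction on the further truncation $h_1\dots h_{m-2}$ to produce additional indices of differing factor type. Managing the bookkeeping across these residual configurations is the most delicate step in the argument.
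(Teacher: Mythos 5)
Your strategy---a direct induction on $m$, using Lemma \ref{lemma geodesics in tree of funalpha} to place $G_{k_m}\coset g_{k_m}$ at the midpoint of $[V_0',V_0]$ and then analysing the median of the tripod $(V_0,V_0',W_{l'})$---is close in spirit to the paper's argument (which uses the same two tools), but the paper runs the induction inside a proof by contradiction, and that structural difference is what makes it close. Assuming no valid pair exists, the paper deduces, working backwards along the syllables, that $C_{k_{m-(i-1)}}$ lies on $[C_{k_0}q_m,C_{k_0}q_{m-i}]$ for every $i$; in each tripod the only two $C$-vertices that ever need to be compared are $C_{k_{m-(i-1)}}$ and $C_{k_{m-i}}$, which correspond to \emph{consecutive} syllables and so automatically have distinct factor indices by reducedness. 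Your forward induction instead must compare $W_m$ against whatever vertex $W_{j'}$ the inductive hypothesis happens to supply, over whose index $k_{j'}$ you have no control.

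That is exactly where the gap lies, and you have located it correctly yourself: the configuration $k_{j'}=k_m$ (so $W_{j'}=W_m$) with this common vertex on $[V_0',M]\setminus\{M\}$ and $M\ne W_{l'}$. There none of the pairs drawn from $\{j',l',m\}$ works, and neither of your proposed repairs is carried out. ``Reselecting $(j',l')$'' presupposes a strictly stronger inductive hypothesis than the one you state: the lemma asserts only that \emph{some} pair exists, and the set of valid pairs can be quite constrained (for instance, with all $g_i=1$, $k_0=1$, $h_1\in G_2$, $h_2\in G_3$, the only valid pairs for $G_1\coset h_1h_2$ are $(2,0)$ and $(2,1)$), so you would have to formulate precisely which extra pairs are always available and prove that by induction too; it is not clear what that statement should be, and the natural candidates (e.g.\ ``$j=m$ always works'') do not propagate cleanly through your own case analysis. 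Truncating further to $h_1\dots h_{m-2}$ does not obviously help either, since relating $G_{k_0}\coset g_{k_0}h_1\dots h_{m-2}$ to $V_0$ forces you through two midpoints and the same degenerate configuration can recur at each stage. As written the argument is incomplete; the cleanest repair is to recast it as the paper does, assuming no valid pair exists and deriving the successive containments $[C_{k_0}q_m,C_{k_{m-(i-1)}},C_{k_0}q_{m-i}]$, so that every comparison is between consecutive syllables and the degenerate case cannot arise.
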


\begin{proof}
For bookkeeping purposes, we set $h_{0}=1$ (so $g_{k_{0}}h_{1}\dots h_{m}=g_{k_{0}}q_{m}$ and $g_{k_{0}}h_{0}=g_{k_{0}}$),
and for brevity, we will write $C_{k_{j}}$ for the coset $G_{k_{j}}\coset g_{k_{j}}$
and $q_{j}$ for  the element $h_{0}\dots h_{j}$. Note then that $q_{j+1}=q_{j}h_{j+1}$.

Suppose for contradiction that the Lemma never holds, that is, we never have
\\ \noindent
$[C_{k_{0}}q_{m}, C_{k_{j}}, C_{k_{l}}]$ for $j,l\in\{0,\dots,m\}$ with $k_{j}\ne k_{l}$.

We claim that we must then have that for each $i\in\{1,\dots,m\}$, the vertex $C_{k_{m-(i-1)}}$ in $\tree{\funalpha}$ lies on the $\tree{\funalpha}$-geodesic $[C_{k_{0}}q_{m},C_{k_{0}}q_{m-i}]$, that is, we have 
$[C_{k_{0}}q_{m}, C_{k_{m-(i-1)}}, C_{k_{0}}q_{m-i}]$.
We proceed by induction on $i$.

Since $q_{m}=q_{m-1}h_{m}$, then by Lemma \ref{lemma geodesics in tree of funalpha}, we have $[C_{k_{0}}q_{m}, C_{k_{m}}, C_{k_{0}}q_{m-1}]$. Thus the claim holds for $i=1$.

Now take $1\le i\le m-1$ and suppose we have $[C_{k_{0}}q_{m}, C_{k_{m-(i-1)}}, C_{k_{0}}q_{m-i}]$.
By Lemma \ref{lemma geodesics in tree of funalpha}, we have $[C_{k_{0}}q_{m-(i+1)}h_{m-i}, C_{k_{m-i}}, C_{k_{0}}q_{m-(i+1)}]$, with $q_{m-(i+1)}h_{m-i}=q_{m-i}$.
Take $V_{i}$ to be the vertex in $\tree{\funalpha}$ satisfying 
$[C_{k_{0}}q_{m-i}, C_{k_{0}}q_{m}] \cap [C_{k_{0}}q_{m-i}, C_{k_{0}}q_{m-(i+1)}] = [C_{k_{0}}q_{m-i},V_{i}]$.
Note that $[C_{k_{0}}q_{m}, C_{k_{m-(i-1)}}, V_{i}]$ would imply $[C_{k_{0}}q_{m}, C_{k_{m-(i-1)}}, C_{k_{m-i}}]$.
Similarly, $[V_{i}, C_{k_{m-i}}, C_{k_{0}}q_{m-i}]$ would imply either $[C_{k_{0}}q_{m}, C_{k_{m-(i-1)}}, C_{k_{m-i}}]$ or 
\\ \noindent
$[C_{k_{0}}q_{m}, C_{k_{m-i}}, C_{k_{m-(i-1)}}]$.
Since we assumed that $[C_{k_{0}}q_{m}, C_{k_{j}}, C_{k_{l}}]$ never holds for $k_{j}\ne k_{l}$, then we must have
$[C_{k_{0}}q_{m}, V_{i}, C_{k_{m-i}}, C_{k_{0}}q_{m-(i+1)}]$.
We illustrate this in Figure \ref{fig proof P(a,b)}.
In particular, we note that $C_{k_{m-i}}$ lies on the geodesic in $\tree{\funalpha}$ from $C_{k_{0}}q_{m}$ to $C_{k_{0}}q_{m-(i+1)}$, that is, we have 
$[C_{k_{0}}q_{m}, C_{k_{m-i}}, C_{k_{0}}q_{m-(i+1)}]$.
Hence if the claim holds for $i\in\{1,\dots,m-1\}$, then it holds for $i+1$.
\begin{figure}[h]
\centering
\begin{tikzpicture}
\draw[red,thick] (-3,2) -- (1,2);
\draw[blue, thick] (-3,1) -- (4,1);
\draw[red,thick] (-3,-0.985) -- (0,-0.985);
\draw[blue,thick] (-3,-1.015) -- (0,-1.015);
\draw[red, thick] (-3,-0.98) -- (0,-0.98) -- (0.866,-0.48);
\draw[blue,thick] (-3,-1.02) -- (0,-1.02) -- (3.464,-3.02);
\draw[thick,Green] (0.886,-0.5) -- (0.02,-1) -- (3.47,-2.99175);
\filldraw (-3,2) circle [radius=0.0625cm];
\filldraw (-0.5,2) circle [radius=0.0625cm];
\filldraw[Green] (0,2) circle [radius=0.075cm];
\filldraw (1,2) circle [radius=0.0625cm];
\filldraw (-3,1) circle [radius=0.0625cm];
\filldraw[Green] (0,1) circle [radius=0.075cm];
\filldraw (0.5,1) circle [radius=0.0625cm];
\filldraw (4,1) circle [radius=0.0625cm];
\filldraw (-3,-1) circle [radius=0.0625cm];
\filldraw (-0.5,-1) circle [radius=0.0625cm];
\filldraw[Green] (0,-1) circle [radius=0.075cm];
\filldraw (0.866,-0.5) circle [radius=0.0625cm];
\filldraw (0.433,-1.25) circle [radius=0.0625cm];
\filldraw (3.434,-3) circle [radius=0.0625cm];
\draw[thick, -Triangle] (0,0.5) -- (0,-0.25);
\node at (-3,2.25) {\footnotesize{$C_{k_{0}}q_{m-i}$}};
\node at (-0.5,2.25) {\footnotesize{$C_{k_{m-(i-1)}}$}};
\node[Green] at (0,1.75) {\footnotesize{$V_{i}$}};
\node at (1.25,2.25) {\footnotesize{$C_{k_{0}}q_{m}$}};
\node at (-3,1.25) {\footnotesize{$C_{k_{0}}q_{m-i}$}};
\node[Green] at (0,0.75) {\footnotesize{$V_{i}$}};
\node at (0.75,1.25) {\footnotesize{$C_{k_{m-i}}$}};
\node at (4.5,1.25) {\footnotesize{$C_{k_{0}}q_{m-(i+1)}$}};
\node at (-3,-0.75) {\footnotesize{$C_{k_{0}}q_{m-i}$}};
\node at (-0.75,-0.75) {\footnotesize{$C_{k_{m-(i-1)}}$}};
\node[Green] at (0,-1.25) {\footnotesize{$V_{i}$}};
\node at (1.375,-0.3) {\footnotesize{$C_{k_{0}}q_{m}$}};
\node at (0.9,-1.1) {\footnotesize{$C_{k_{m-i}}$}};
\node at (3.75,-3.25) {\footnotesize{$C_{k_{0}}q_{m-(i+1)}$}};
\end{tikzpicture}
\caption{ Inductive Step in Proof of Lemma \ref{lemma P(a,b)} }
\label{fig proof P(a,b)}
\end{figure}
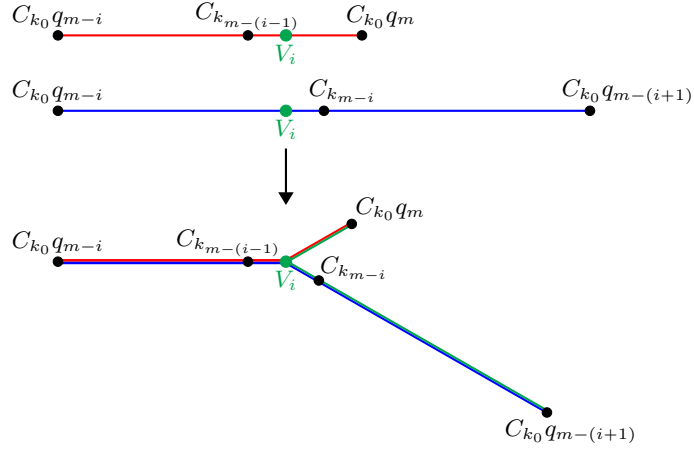

Since the claim holds for $i=1$, we conclude that the claim holds for all $i\in\{1,\dots,m\}$.
In particular, we have $[C_{k_{0}}q_{m}, C_{k_{1}}, C_{k_{0}}h_{0}]$.
However, $C_{k_{0}}h_{0} = C_{k_{0}}$, contradicting that we never have $[C_{k_{0}}q_{m}, C_{k_{j}}, C_{k_{l}}]$.
\end{proof}

\begin{lemma}\label{lemma translation by two implies elliptic}
Let $G_{i}\coset g_{i}$ be a $C$-vertex in $\tree{\funalpha}$ (that is, $i\in\{1,\dots,n\}$ and $g_{i}\in G$).
Let $U\coset x$ and $U \coset y$ be two neighbours of $G_{i}\coset g_{i}$ in $\tree{\funalpha}$ (that is, $x,y\in G$).
Then $x^{-1}y$ stabilises $G_{i}\coset g_{i}$ (i.e. $G_{i}\coset g_{i}x^{-1}y=G_{i}\coset g_{i}$ and $x^{-1}y\in G_{i}^{g_{i}}$).
\end{lemma}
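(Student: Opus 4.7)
The plan is to extract, directly from Definition \ref{defn universal cover}, the form of the $U$-neighbours of the $C$-vertex $G_{i}\coset g_{i}$, and then perform a one-line computation in $G$.

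First I would identify which edges of $\tree{\funalpha}$ are incident to $G_{i}\coset g_{i}$. An edge of $\tree{\funalpha}$ has the form $e_{j}\coset g$ with endpoints $U\coset g$ and $G_{j}\coset g$. Since $U$-vertices and $C$-vertices are disjoint, such an edge meets $G_{i}\coset g_{i}$ only through its $C$-endpoint; and since distinct $C$-vertices are distinguished by their $G$-stabilisers $G_{j}^{g}$ (Observation \ref{obs tree of funalpha properties}), this forces $j=i$ and $G_{i}\coset g = G_{i}\coset g_{i}$, i.e.\ $g \in G_{i}g_{i}$ by Observation \ref{obs malnormalcy and coset equivalence}. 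Moreover, since $U$-vertices have trivial stabiliser, distinct values of $g$ yield distinct edges $e_{i}\coset g$, and so the $U$-neighbours of $G_{i}\coset g_{i}$ are precisely $\{U\coset g : g\in G_{i}g_{i}\}$.

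Applying this to the hypothesis, both $x$ and $y$ must lie in the right coset $G_{i}g_{i}$, so I would write $x = ag_{i}$ and $y = bg_{i}$ for some $a,b\in G_{i}$. Then
\[ x^{-1}y \;=\; g_{i}^{-1} a^{-1} b\, g_{i} \;\in\; g_{i}^{-1} G_{i} g_{i} \;=\; G_{i}^{g_{i}}, \]
which by Observation \ref{obs tree of funalpha properties} is exactly the $G$-stabiliser of the vertex $G_{i}\coset g_{i}$. In particular $G_{i}\coset g_{i}x^{-1}y = G_{i}\coset g_{i}$, as required.

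The only step that needs genuine care is the first one: reading the disjoint-union formulas for $V(\tree{\funalpha})$ and $E(\tree{\funalpha})$ correctly so as to conclude that the $U$-neighbours of $G_{i}\coset g_{i}$ really are parametrised by the right coset $G_{i}g_{i}$ (rather than, say, by elements of a conjugate, or by a single transversal). Once this bookkeeping is settled, the remainder is an immediate group-theoretic computation, and no deeper geometric input about $\tree{\funalpha}$ is required.
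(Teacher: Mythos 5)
Your argument is correct, but it takes a different route from the paper's. The paper never computes the neighbour set of a $C$-vertex; instead it translates by $x^{-1}y$, observing that adjacency of $G_{i}\coset g_{i}$ to $U\coset x$ forces $(G_{i}\coset g_{i})\cdot x^{-1}y$ to be a neighbour of $U\coset y$, and then uses the explicitly stated fact (Observation \ref{obs tree of funalpha properties}) that the neighbours of a $U$-vertex contain exactly one coset of each $G_{j}$ — so $(G_{i}\coset g_{i})\cdot x^{-1}y$ and $G_{i}\coset g_{i}$ must both be that unique $G_{i}$-coset adjacent to $U\coset y$, hence equal. You instead unwind Definition \ref{defn universal cover} to parametrise the $U$-neighbours of the $C$-vertex as $\{U\coset g : g\in G_{i}g_{i}\}$ and finish with a one-line computation $x^{-1}y=g_{i}^{-1}a^{-1}bg_{i}\in G_{i}^{g_{i}}$. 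Your approach is more direct and makes the geometry completely explicit, at the cost of the bookkeeping you rightly flag: in ruling out $j\ne i$ you appeal to $C$-vertices being ``distinguished by their stabilisers,'' which across distinct factors amounts to the standard fact that $G_{j}$ cannot be conjugate to $G_{i}$ for $j\ne i$ (or, more cheaply, to the formal disjointness over $w\in V(T)$ built into the definition of $V(\tree{\funalpha})$) — this is not quite covered by Observation \ref{obs malnormalcy and coset equivalence}, which only treats conjugates of a single factor, so a one-clause justification there would tighten the write-up. The paper's equivariance argument sidesteps this entirely by working only with $U$-vertices, whose neighbour sets are already recorded. Both proofs are sound.
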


\begin{proof}
Let $G_{i}\coset g_{i}$, $U\coset x$, and $U\coset y$ be as in the statement of the Lemma.
Since $G_{i}\coset g_{i}$ is adjacent in $\tree{\funalpha}$ to $U\coset x$, then $(G_{i}\coset g_{i})\cdot x^{-1}y$ must be adjacent in $\tree{\funalpha}$ to $(U\coset x)\cdot x^{-1}y=U\coset xx^{-1}y=U\coset y$.
Note that the neighbours of $U\coset y$ in $\tree{\funalpha}$ are the vertices $G_{1}\coset h_{1}, \dots, G_{n}\coset h_{n}$ for some $h_{1},\dots,h_{n}\in G$.
Then we must have that $(G_{i}\coset g_{i})\cdot x^{-1}y = G_{i}\coset h_{i}$.
But $G_{i}\coset g_{i}$ is also adjacent to $U\coset y$ in $\tree{\funalpha}$, and so we must also have that $G_{i}\coset g_{i}=G_{i}\coset h_{i}$.
That is, $(G_{i}\coset g_{i})\cdot x^{-1}y = G_{i}\coset g_{i}$ and so $x^{-1}y\in\stab_{G}(G_{i}\coset g_{i}) = G_{i}^{g_{i}}$.
\end{proof}

\subsection{`Volumes' of $\alpha$-Graphs}

%\textcolor{Green}{
%The goal of this subsection is to show that given an arbitrary $\alpha$-graph class in $\mathcal{S}_{n}^{(0)}$, we can find a path to $\class{\funalpha}$ comprising only $\alpha$-graph classes and $A$-graph classes. The result of this is Proposition \ref{prop alpha-A paths}.
%}
%\edit{Discussion}
%\textcolor{Green}{in order to show paths exist, will define `volume' of graph and show it can be reduced to the volume of $\funalpha$}
In order to show that we can find paths in $\mathcal{C}_{n}$ from arbitrary $\alpha$-graph classes to $\class{\funalpha}$, we define quantities `volume' associated to $\alpha$-graphs, and show that this quantity is somehow `reducible' to that of $\funalpha=(\alpha:G_{1},\dots,G_{n})$.

\begin{defn} \label{defn spoke graph}
Let $\alpha_{0}=(\alpha:G_{1}^{g_{1}},\dots,G_{n}^{g_{n}})$ be a representative of an $\alpha$-graph class $\class{\alpha_{0}}$ in $\mathcal{S}_{n}^{(0)}$ and let $x\in G$.
We call the geodesic $[U\coset x,G_{i}\coset g_{i}]$ in $\tree{\funalpha}$ from the vertex $U\coset x$ to the vertex $G_{i}\coset g_{i}$ the \emph{ $i$\textsuperscript{th} $U\coset x$-spoke of $\alpha_{0}$ }.
We now construct the \emph{ $U\coset x$-spoke graph of $\alpha_{0}$ } by taking the disjoint union of all $n$ $U\coset x$-spokes of $\alpha_{0}$ and identifying them at the vertex $U\coset x$.
We denote this graph $\spoke{U\coset x}{\alpha_{0}}$. 
\end{defn}

\begin{defn} \label{defn vol of alpha graph}
Let $\alpha_{0}=(\alpha:G_{1}^{g_{1}},\dots,G_{n}^{g_{n}})$ and let $x\in G$.
We define the \emph{ $U\coset x$-volume } of $\alpha_{0}$, denoted $\vol{U\coset x}{\alpha_{0}}$, to be the total number of edges in the $U\coset x$-spoke graph $\spoke{U\coset x}{\alpha_{0}}$.
\end{defn}

Note that this definition is depenedent on the choice of $U$-vertex $U\coset x$ in $\tree{\funalpha}$ (equivalently, the choice of $x\in G$).

\begin{obs}
Let $\alpha_{0}=(\alpha:G_{1}^{g_{1}},\dots,G_{n}^{g_{n}})$ and $\alpha_{1}=(\alpha:G_{1}^{g_{1}h},\dots,G_{n}^{g_{n}h})$ be two equivalent $\mathfrak{S}$-labellings of an $\alpha$-graph class in $\mathcal{S}_{n}$, and let $x\in G$.
Since $G$ acts on $\tree{\funalpha}$ by isometries, we have that the $i$\textsuperscript{th} $U\coset xh$-spoke of $\alpha_{1}$ in $\tree{\funalpha}$ is the $h$-image of the $i$\textsuperscript{th} $U\coset x$-spoke of $\alpha_{0}$
(i.e. $[U\coset xh, G_{i}\coset g_{i}h]=[U\coset x,G_{i}\coset g_{i}]\cdot h$).
That is, $\spoke{U\coset xh}{\alpha_{1}}$ `looks like' $\spoke{U\coset x}{\alpha_{0}}$.
In particular, $\vol{U\coset xh}{\alpha_{1}}=\vol{U\coset x}{\alpha_{0}}$.
\end{obs}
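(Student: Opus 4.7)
The plan is to argue directly from the simplicial isometric action of $G$ on $\tree{\funalpha}$ described in the observation following Definition \ref{defn universal cover}. First I would recall that this action is given by $(H_{w}\coset g) \cdot h = H_{w}\coset gh$ on both vertices and edges, so in particular $(U\coset x) \cdot h = U\coset xh$ and $(G_{i}\coset g_{i}) \cdot h = G_{i}\coset g_{i}h$ for each $i$. These are exactly the ``centres'' and $C$-vertices that define the $i$\textsuperscript{th} spokes of $\alpha_{0}$ at $U\coset x$ and of $\alpha_{1}$ at $U\coset xh$, respectively (using that the $C$-vertex in $\tree{\funalpha}$ associated with the $i$\textsuperscript{th} factor $G_{i}^{g_{i}h}$ of $\alpha_{1}$ is $G_{i}\coset g_{i}h$, by Observation \ref{obs malnormalcy and coset equivalence}).

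Next, since $G$ acts on the tree $\tree{\funalpha}$ by simplicial isometries and geodesics between a pair of vertices in a tree are unique, the action must send the geodesic $[U\coset x, G_{i}\coset g_{i}]$ precisely to the geodesic $[U\coset xh, G_{i}\coset g_{i}h]$. By Definition \ref{defn spoke graph}, these are exactly the $i$\textsuperscript{th} $U\coset x$-spoke of $\alpha_{0}$ and the $i$\textsuperscript{th} $U\coset xh$-spoke of $\alpha_{1}$. Since $h$ carries the common basepoint $U\coset x$ of the spokes of $\alpha_{0}$ to the common basepoint $U\coset xh$ of the spokes of $\alpha_{1}$, taking the union over $i$ and respecting the identifications gives a graph isomorphism $\spoke{U\coset x}{\alpha_{0}} \to \spoke{U\coset xh}{\alpha_{1}}$ induced by the action of $h$.

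Finally, since any graph isomorphism induces a bijection between edge sets, we conclude $\vol{U\coset xh}{\alpha_{1}}=\vol{U\coset x}{\alpha_{0}}$, as required. There is no genuine obstacle here: the content of the observation really is just that volumes of spoke graphs are invariant under the isometric $G$-action, and everything reduces to unwinding the definitions of the spoke, the equivalence of $\mathfrak{S}$-labellings (Lemma \ref{lemma equivalent alpha labellings}), and the formula for the $G$-action on $\tree{\funalpha}$.
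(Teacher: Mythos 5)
Your proposal is correct and matches the paper's own (implicit) justification: the observation is proved in the paper exactly by noting that the isometric right action of $h$ sends $U\coset x$ to $U\coset xh$ and $G_{i}\coset g_{i}$ to $G_{i}\coset g_{i}h$, hence carries each spoke geodesic to the corresponding spoke geodesic, giving $\spoke{U\coset xh}{\alpha_{1}}=\spoke{U\coset x}{\alpha_{0}}\cdot h$ and equality of volumes. Your unwinding of the definitions and the appeal to uniqueness of geodesics in a tree add nothing beyond, and omit nothing from, the paper's argument.
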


Thus $U\coset x$-volume is a property of $\alpha$-graphs, rather than $\alpha$-graph classes, although volumes of equivalent $\alpha$-graphs are comparable by changing the choice of $x\in G$. One may adapt this to define a definitive volume of an $\alpha$-graph class (for example by taking a minimum or a sum over all $U\coset x$-volumes), but it will not be necessary for our arguments.

\begin{lemma}\label{lemma only funalpha has vol n}
Let $\alpha_{0}$ be a representative of an $\alpha$-graph class in $\mathcal{S}_{n}^{(0)}$.
We have that $\alpha_{0}\simeq\funalpha$ if and only if there is some $x\in G$ so that $\vol{U\coset x}{\alpha_{0}}=n$.
\end{lemma}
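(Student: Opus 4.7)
The plan is to prove each direction separately, exploiting the tree structure of $\tree{\funalpha}$ throughout. For the forward direction, I would start from $\alpha_{0}\simeq\funalpha$ and invoke Lemma \ref{lemma equivalent alpha labellings} to obtain some $g\in G$ with $G_{i}^{g_{i}}=G_{i}^{g}$ for every $i$. Setting $x:=g$, Observation \ref{obs malnormalcy and coset equivalence} gives $G_{i}\coset g_{i}=G_{i}\coset x$ as cosets, so each endpoint of the spoke graph is already a neighbour of $U\coset x$ in $\tree{\funalpha}$. Hence every $U\coset x$-spoke has length exactly $1$; moreover, the $n$ spokes terminate at the $n$ distinct neighbours $G_{1}\coset x,\dots,G_{n}\coset x$ of $U\coset x$ (distinct by Definition \ref{defn universal cover}), so they contribute $n$ distinct edges, giving $\vol{U\coset x}{\alpha_{0}}=n$.

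For the converse, the key observation is that $\spoke{U\coset x}{\alpha_{0}}$ is itself a subtree of $\tree{\funalpha}$, being the union of geodesics that all share the endpoint $U\coset x$. This subtree contains the $n+1$ pairwise distinct vertices $U\coset x,\ G_{1}\coset g_{1},\ \dots,\ G_{n}\coset g_{n}$; they are distinct because $U$- and $C$-vertices are of different types, and $C$-vertices with different factor indices live in different summands of the disjoint union in Definition \ref{defn universal cover}. Since a finite tree on $V$ vertices has $V-1$ edges, we immediately obtain $\vol{U\coset x}{\alpha_{0}}\geq n$, with equality if and only if the spoke graph has no vertex other than these $n+1$. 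Assuming equality, the bipartite structure of $\tree{\funalpha}$ forces each $C$-vertex $G_{i}\coset g_{i}$ in the spoke graph to be joined by an edge to some $U$-vertex there; but $U\coset x$ is the only $U$-vertex present, so $G_{i}\coset g_{i}$ is a neighbour of $U\coset x$, and comparing factor indices gives $G_{i}\coset g_{i}=G_{i}\coset x$. Observation \ref{obs malnormalcy and coset equivalence} then yields $G_{i}^{g_{i}}=G_{i}^{x}$ for every $i$, and Lemma \ref{lemma equivalent alpha labellings} (with the common conjugator $g=x$) gives $\alpha_{0}\simeq\funalpha$.

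The main obstacle is conceptual rather than computational: one must recognise that the correct quantity to bound is the number of vertices of the spoke subtree, after which the identity $E=V-1$ for a finite tree pins down the extremal case and the bipartiteness of $\tree{\funalpha}$ does the rest. Once this viewpoint is in place, both implications reduce to routine translation between cosets and conjugates via Observation \ref{obs malnormalcy and coset equivalence} and an appeal to Lemma \ref{lemma equivalent alpha labellings}.
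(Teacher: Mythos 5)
Your proof is correct and follows essentially the same route as the paper: the forward direction is identical, and in the converse both arguments reduce $\vol{U\coset x}{\alpha_{0}}=n$ to every spoke having length $1$, so that each $G_{i}\coset g_{i}$ is a neighbour of $U\coset x$, whence Observation \ref{obs malnormalcy and coset equivalence} and Lemma \ref{lemma equivalent alpha labellings} finish. One small caution: $\spoke{U\coset x}{\alpha_{0}}$ is by definition the disjoint union of the $n$ spokes glued only at $U\coset x$ (a spider), not their union as a subtree of $\tree{\funalpha}$, so overlapping spokes are counted with multiplicity and the cleaner route to $\vol{U\coset x}{\alpha_{0}}\ge n$ with equality exactly when every spoke has length $1$ is simply that the volume is the sum of the $n$ spoke lengths, each at least $1$ --- though your $E=V-1$ count still lands correctly in the extremal case.
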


\begin{proof}
Recall that $\funalpha:=(\alpha:G_{1},\dots,G_{n})$.
Thus if $\alpha_{0}\simeq \funalpha$ then by Lemma \ref{lemma equivalent alpha labellings}, there exitsts $g\in G$ such that $\alpha_{0}=(G_{1}^{g},\dots,G_{n}^{g})$.
For each $i$ we have that $U\dash G_{i}$ is an edge in $\tree{\funalpha}$, thus so too is $U\coset g \dash G_{i}\coset g$.
We then have that $\spoke{U\coset g}{\alpha_{0}}$ is the star with central vertex $U\coset g$ and $n$ leaves $G_{1}\coset g, \dots, G_{n}\coset g$.
Hence $\vol{U\coset g}{\alpha_{0}}=n$.

Now let $\alpha_{0}=(\alpha:G_{1}^{g_{1}},\dots,G_{n}^{g_{n}})$ and suppose $\vol{U\coset x}{\alpha_{0}}=n$ for some $x\in G$.
Since $\spoke{U\coset x}{\alpha_{0}}$ comprises $n$ distinct $U\coset x$-spokes, we must have that each $U\coset x$-spoke has length $1$ in $\tree{\funalpha}$.
That is, for each $i$, $U\coset x \dash G_{i}\coset g_{i}$ is an edge in $\tree{\funalpha}$.
Then for each $i$, $U \dash G_{i}\coset g_{i}x^{-1}$ must also be an edge of $\tree{\funalpha}$.
However, the only neighbours of $U$ in $\tree{\funalpha}$ are the vertices $G_{1}, \dots, G_{n}$. 
So we must have that $G_{i}\coset g_{i}x^{-1}=G_{i}$ as cosets, and thus by Observation \ref{obs malnormalcy and coset equivalence}, $G_{i}^{g_{i}x^{-1}}=G_{i}$ .
Now $\alpha_{0}=(\alpha:G_{1}^{g_{1}},\dots,G_{n}^{g_{n}})\simeq(\alpha:G_{1}^{g_{1}x^{-1}},\dots,G_{n}^{g_{n}x^{-1}})=(\alpha:G_{1},\dots,G_{n})=\funalpha$.
\end{proof}

\begin{obs}\label{obs vol is integer at least n}
By choosing $U\coset x$ to be suitably `far away' from the vertices $G_{1}\coset g_{1}, \dots, G_{n}\coset g_{n}$ in $\tree{\funalpha}$, one can take $\vol{U\coset x}{\alpha_{0}}$ to be as large as desired.
However, since $\spoke{U\coset x}{\alpha_{0}}$ always comprises $n$ $U\coset x$-spokes, each of which have integer length at least 1, we deduce that $\vol{U\coset x}{\alpha_{0}}\ge n$ for any $\alpha_{0}$ and any $x\in G$ (and that $\vol{U\coset x}{\alpha_{0}}$ must always take integer values). 
\end{obs}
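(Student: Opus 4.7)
The observation splits into two independent claims: (i) that $\vol{U\coset x}{\alpha_0}$ can be made arbitrarily large by suitable choice of $x$, and (ii) that $\vol{U\coset x}{\alpha_0}$ is always an integer of value at least $n$. My plan is to unpack Definition \ref{defn spoke graph} and then handle each part by direct computation in $\tree{\funalpha}$.

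For the lower bound, I would start from the observation that $\spoke{U\coset x}{\alpha_0}$ is by construction the disjoint union of the $n$ geodesic paths $[U\coset x, G_i\coset g_i]$, with only the basepoint $U\coset x$ identified and no other vertices or edges glued. Hence the total number of edges is exactly
\[
\vol{U\coset x}{\alpha_0} = \sum_{i=1}^n \bigl|[U\coset x, G_i\coset g_i]\bigr|,
\]
a sum of non-negative integers, which is manifestly an integer. By Observation \ref{obs tree of funalpha properties}, $\tree{\funalpha}$ is bipartite between $U$-vertices and $C$-vertices, so $U\coset x$ is never equal to any $G_i\coset g_i$, forcing each geodesic to have length at least $1$. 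Summing yields $\vol{U\coset x}{\alpha_0}\ge n$ as required.

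For unboundedness, the key input is that $G$ is infinite (since $n\ge 3$ and each $G_i$ is non-trivial, $G$ already contains an infinite dihedral subgroup) and that the $U$-vertices of $\tree{\funalpha}$ are in bijection with $G$ via $g\mapsto U\coset g$. Moreover, every vertex of $\tree{\funalpha}$ has valency at least $2$ ($U$-vertices have valency $n\ge 3$ and each $C$-vertex has valency $|G_i|\ge 2$), so the tree has no leaves and contains infinite geodesic rays from every vertex, along which $U$-vertices occur at every even distance. Given an arbitrary target bound $D$, I would fix any $U$-vertex $U\coset x_0$, set $R := D + \max_i d(U\coset x_0, G_i\coset g_i)$, and walk along such a ray to produce a $U$-vertex $U\coset x$ with $d(U\coset x_0, U\coset x)\ge R$. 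The triangle inequality then forces $d(U\coset x, G_i\coset g_i)\ge D$ for every $i$, so $\vol{U\coset x}{\alpha_0}\ge nD$, which grows without bound.

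The proof is largely routine. The only step requiring care is the combinatorial unpacking of Definition \ref{defn spoke graph}: one must be clear that the construction "disjoint union identified at $U\coset x$" does not further collapse any edges which the spokes happen to share as subsets of $\tree{\funalpha}$, so that the edge count is the \emph{sum} of the spoke lengths rather than the size of the union-as-subtree. Once that bookkeeping is fixed, both parts follow from elementary tree combinatorics together with the infinitude of $G$.
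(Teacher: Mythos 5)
Your proof is correct and follows essentially the same reasoning the paper leaves implicit in the observation: the volume is the sum of the $n$ spoke lengths (since the wedge-at-$U\coset x$ construction counts edges with multiplicity), each of which is a positive integer by bipartiteness of $\tree{\funalpha}$, and unboundedness comes from choosing $U\coset x$ far away, which exists because $\tree{\funalpha}$ is infinite with no leaves. One inessential slip: $G$ need not contain an infinite dihedral subgroup (e.g.\ a free product of copies of $\mathbb{Z}/3$ has no elements of order $2$), but the fact you actually need --- that $G$ is infinite --- still holds, since a product of non-trivial elements from two distinct factors has infinite order.
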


%\textcolor{Green}{next lemma is weird but needed to show volume is reducible}

\begin{lemma}\label{lemma spokes fold}
Suppose $G_{1}^{g_{1}}\ast\dots\ast G_{n}^{g_{n}}$ is an $\mathfrak{S}$-free splitting of $G$.
Let $\alpha_{0}=(\alpha:G_{1}^{g_{1}},\dots,G_{n}^{g_{n}})$ and let $x\in G$ be such that $\vol{U\coset x}{\alpha_{0}}>n$.
Then there exist distinct $j$ and $l$ in $\{1,\dots,n\}$ so that the $l$\textsuperscript{th} $U\coset x$-spoke of $\alpha_{0}$ contains the $C$-vertex $G_{j}\coset g_{j}$.
That is, in $\tree{\funalpha}$ the vertex $G_{j}\coset g_{j}$ lies on the geodesic $[U\coset x,G_{l}\coset g_{l}]$, i.e. we have $[U\coset x, G_{j}\coset g_{j}, G_{l}\coset g_{l}]$.
\end{lemma}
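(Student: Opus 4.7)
The plan is to reduce to Lemma \ref{lemma P(a,b)} by rewriting $x$ in reduced form relative to the $\mathfrak{S}$-free splitting $G=G_{1}^{g_{1}}\ast\dots\ast G_{n}^{g_{n}}$ and then running a tree-separation argument in $\tree{\funalpha}$.

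Because $\tree{\funalpha}$ is bipartite between $U$- and $C$-vertices, each spoke has odd length at least $1$, so $\vol{U\coset x}{\alpha_{0}}>n$ forces some spoke, say the $k_{0}$\textsuperscript{th}, to have length at least $3$; in particular $xg_{k_{0}}^{-1}\notin G_{k_{0}}$ and $G_{k_{0}}\coset x\neq G_{k_{0}}\coset g_{k_{0}}$.  I would then write $g_{k_{0}}^{-1}x$ as a reduced word in $G_{1}^{g_{1}}\ast\dots\ast G_{n}^{g_{n}}$.  This word is non-empty, and if its leading letter happens to lie in $G_{k_{0}}^{g_{k_{0}}}$ there must be a subsequent letter (else $G_{k_{0}}\coset x=G_{k_{0}}\coset g_{k_{0}}$); I absorb that leading letter into $g_{k_{0}}$ using $g_{k_{0}}G_{k_{0}}^{g_{k_{0}}}=G_{k_{0}}g_{k_{0}}$.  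The result is a factorisation $x = h'\,g_{k_{0}}\,h_{1}h_{2}\cdots h_{m}$ with $h'\in G_{k_{0}}$, $m\geq 1$, each $h_{i}\in G_{k_{i}}^{g_{k_{i}}}$, $k_{i-1}\neq k_{i}$, and $k_{1}\neq k_{0}$; consequently $G_{k_{0}}\coset x=G_{k_{0}}\coset g_{k_{0}}h_{1}\cdots h_{m}$, which matches the hypothesis of Lemma \ref{lemma P(a,b)}.

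Applying Lemma \ref{lemma P(a,b)} produces $j,l'\in\{0,\dots,m\}$ with $k_{j}\neq k_{l'}$ such that the canonical $C$-vertex $G_{k_{j}}\coset g_{k_{j}}$ lies on the $\tree{\funalpha}$-geodesic $[G_{k_{0}}\coset x,\,G_{k_{l'}}\coset g_{k_{l'}}]$.  This vertex is an \emph{interior} point of that geodesic: it differs from $G_{k_{l'}}\coset g_{k_{l'}}$ because $k_{j}\neq k_{l'}$ means the two canonical cosets involve distinct free factors, and it differs from $G_{k_{0}}\coset x$ because otherwise $k_{j}=k_{0}$ and $G_{k_{0}}\coset g_{k_{0}}=G_{k_{0}}\coset x$, contradicting the opening paragraph.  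Removing $G_{k_{j}}\coset g_{k_{j}}$ from $\tree{\funalpha}$ therefore separates $G_{k_{0}}\coset x$ from $G_{k_{l'}}\coset g_{k_{l'}}$; as the edge $U\coset x\dash G_{k_{0}}\coset x$ is unaffected, $U\coset x$ lies on the same side as $G_{k_{0}}\coset x$.  Hence the $k_{l'}$\textsuperscript{th} spoke $[U\coset x,\,G_{k_{l'}}\coset g_{k_{l'}}]$ must pass through $G_{k_{j}}\coset g_{k_{j}}$, and setting $j:=k_{j}$, $l:=k_{l'}$ yields the desired distinct pair in $\{1,\dots,n\}$.

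The delicate step is the reduced-word manipulation: one has to choose the spoke index $k_{0}$ carefully and possibly absorb an initial $G_{k_{0}}^{g_{k_{0}}}$-letter so that Lemma \ref{lemma P(a,b)}'s hypothesis $k_{1}\neq k_{0}$ is met.  Once the input is in the correct form, the rest of the proof is essentially a tree-theoretic separation argument.
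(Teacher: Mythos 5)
Your proposal is correct and follows essentially the same route as the paper's proof: both identify a $C$-vertex adjacent to $U\coset x$ of the form $G_{k_{0}}\coset g_{k_{0}}h_{1}\cdots h_{m}$, feed it into Lemma \ref{lemma P(a,b)}, and then transfer the resulting betweenness statement onto a spoke based at $U\coset x$ (you via a separation argument, the paper via a short case analysis using bipartiteness). If anything, your explicit absorption of a leading $G_{k_{0}}^{g_{k_{0}}}$-letter to meet the hypothesis $k_{1}\ne k_{0}$ is handled more carefully than in the paper, which simply ``assumes'' the word $g_{k_{0}}h_{1}\cdots h_{m}$ is reduced.
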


\begin{proof}
Since $\vol{U\coset x}{\alpha_{0}}>n$, then the $U\coset x$-spoke graph of $\alpha_{0}$ contains more than $n$ edges.
In particular, there is some $i$ so that the $i$\textsuperscript{th} $U\coset x$-spoke of $\alpha_{0}$ in $\tree{\funalpha}$ contains more than one edge.
Then there exist $k_{0}\in\{1,\dots,n\}$ and $h\in G$ so that $G_{k_{0}}\coset g_{k_{0}}h$ lies on the $i$\textsuperscript{th} $U\coset x$-spoke, with $G_{k_{0}}\coset g_{k_{0}}h \ne G_{i}\coset g_{i}$.
Recall that $g_{k_{0}}$ is the exponent in $G_{k_{0}}^{g_{k_{0}}}$ in the $\mathfrak{S}$-labelling $\alpha_{0}$.
By Observation \ref{obs tree of funalpha properties}, we may assume without loss of generality that $G_{k_{0}}\coset g_{k_{0}}h$ is adjacent to $U\coset x$ in $\tree{\funalpha}$.

If $h\in G_{k_{0}}^{g_{k_{0}}}$ then $G_{k_{0}}\coset g_{k_{0}}h=G_{k_{0}}\coset g_{k_{0}}$, hence $G_{k_{0}}\coset g_{k_{0}}$ lies on the $i$\textsuperscript{th} $U\coset x$-spoke and the Lemma is satisfied
(in this case, the assumption that the $i$\textsuperscript{th} $U\coset x$-spoke has length strictly greater than 1 means we have that $G_{k_{0}}\coset g_{k_{0}} \ne G_{i}\coset g_{i}$).
We will thus assume $h\not\in G_{k_{0}}^{g_{k_{0}}}$.

Since $G_{1}^{g_{1}}\ast\dots\ast G_{n}^{g_{n}}$ is a free splitting for $G$, we may write $h=h_{1}\dots h_{m}$ where for each $a\in\{1,\dots,m\}$, $h_{a}$ belongs to a factor group $G_{k_{a}}^{g_{k_{a}}}$.
We will assume $g_{k_{0}}h_{1}\dots h_{m}$ is a reduced word --- that is, $k_{a}\ne k_{a-1}$ for each $a\in\{1,\dots,m\}$.

By Lemma \ref{lemma P(a,b)}, there exist $j$ and $l$ in $\{0,\dots,m\}$ with $k_{j}\ne k_{l}$ such that 
\\ \noindent $[G_{k_{0}}\coset g_{k_{0}}h, G_{k_{j}}\coset g_{k_{j}}, G_{k_{l}}\coset g_{k_{l}}]$ holds.
Note that $k_{j}\ne k_{l}$ means that $G_{k_{j}}\coset g_{k_{j}}\ne G_{k_{l}}\coset g_{k_{l}}$, and $h\not\in G_{k_{0}}^{g_{k_{0}}}$ means $G_{k_{0}}\coset g_{k_{0}}h\ne G_{k_{0}}\coset g_{k_{0}}$ and hence $G_{k_{0}}\coset g_{k_{0}}h\ne G_{k_{a}}\coset g_{k_{a}}$ for any $k_{a}\in\{1,\dots,n\}$.
In particular, $G_{k_{0}}\coset g_{k_{0}}h\ne G_{k_{j}}\coset g_{k_{j}}$.

Observe that since $G_{k_{0}}\coset g_{k_{0}}h$ is adjacent to $U\coset x$ in $\tree{\funalpha}$ and $[G_{k_{0}}\coset g_{k_{0}}h, G_{k_{j}}\coset g_{k_{j}}]$ has length at least 2 (since $\tree{\funalpha}$ is bipartite), then $[G_{k_{0}}\coset g_{k_{0}}h, G_{k_{j}}\coset g_{k_{j}}, G_{k_{l}}\coset g_{k_{l}}]$ implies either 
$[U\coset x, G_{k_{0}}\coset g_{k_{0}}h, G_{k_{j}}\coset g_{k_{j}}, G_{k_{l}}\coset g_{k_{l}}]$ or $[G_{k_{0}}\coset g_{k_{0}}h, U\coset x, G_{k_{j}}\coset g_{k_{j}}, G_{k_{l}}\coset g_{k_{l}}]$.
In either case, we have that the vertex $G_{k_{j}}\coset g_{k_{j}}$ in $\tree{\funalpha}$ lies on the $\tree{\funalpha}$-geodesic $[U\coset x,G_{k_{l}}\coset g_{k_{l}}]$, as required.
\end{proof}

We are now able to provide the key ingredient required for this Section: that $U\coset x$-volumes are `reducible':

\begin{lemma}\label{lemma volume is decreasable} 
Let $\class{\alpha_{0}}$ be an $\alpha$-graph class in $\mathcal{S}_{n}^{(0)}$ with representative $\alpha_{0}=(\alpha:G_{1}^{g_{1}},\dots,G_{n}^{g_{n}})$, and fix $x\in G$ so that $\vol{U\coset x}{\alpha_{0}}>n$.
Then there exist an $\alpha$-graph class $\class{\alpha_{1}}$ and an $A$-graph class $\class{A_{1}}$ in $\mathcal{S}_{n}^{(0)}$ so that $\class{\alpha_{0}} \dash \class{A_{1}} \dash \class{\alpha_{1}}$ is a path in $\mathcal{S}_{n}$, and $\class{\alpha_{1}}$ has a representative $\alpha_{1}$ with $\vol{U\coset x}{\alpha_{1}}<\vol{U\coset x}{\alpha_{0}}$.
\end{lemma}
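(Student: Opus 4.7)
The plan is to construct $\class{A_1}$ as a collapse of $\alpha_0$ along an edge identified by Lemma \ref{lemma spokes fold}, and to obtain $\class{\alpha_1}$ by twisting the $l$-th factor of $\alpha_0$ by a carefully chosen element of $G_j^{g_j}$. More precisely, Lemma \ref{lemma spokes fold} supplies distinct $j, l \in \{1,\dots,n\}$ with $G_j \coset g_j$ on the $\tree{\funalpha}$-geodesic $[U \coset x, G_l \coset g_l]$. I take $A_1 := (A: G_j^{g_j}; G_1^{g_1}, \dots, G_n^{g_n})$, which by Lemma \ref{lemma alphas collapse to As} is a collapse of $\alpha_0$, so $\class{\alpha_0} \dash \class{A_1}$ is an edge of $\mathcal{S}_n$.

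The heart of the proof is choosing the right $h \in G_j^{g_j}$. Bipartiteness of $\tree{\funalpha}$ together with $j \ne l$ force $G_j \coset g_j$ to lie strictly inside $[U\coset x, G_l \coset g_l]$, so it has two $U$-vertex neighbours on this geodesic: one on the $U\coset x$-side, call it $U\coset x'$, and one on the $G_l\coset g_l$-side, call it $U\coset x''$. By Lemma \ref{lemma translation by two implies elliptic}, the element $h := (x'')^{-1} x'$ lies in $G_j^{g_j}$, and it is non-trivial because $U\coset x' \ne U\coset x''$. I then set
\[\alpha_1 := (\alpha: G_1^{g_1}, \dots, G_{l-1}^{g_{l-1}}, G_l^{g_l h}, G_{l+1}^{g_{l+1}}, \dots, G_n^{g_n}).\]
To see that this is a legitimate $\alpha$-graph, let $\varphi \in \auts(G)$ realise the splitting of $\alpha_0$ (Lemma \ref{lemma pure symmetric autos give S-free splittings}); then $\varphi^{-1}(h) \in G_j$, so $W := (G_l, \varphi^{-1}(h))$ is a Whitehead automorphism of $G = G_1 \ast \dots \ast G_n$, and $\varphi \circ W \in \auts(G)$ satisfies $\varphi \circ W(G_k) = G_k^{g_k}$ for $k \ne l$ and $\varphi \circ W(G_l) = G_l^{g_l h}$. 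A second application of Lemma \ref{lemma pure symmetric autos give S-free splittings} then certifies $\alpha_1$ as an $\alpha$-graph.

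For adjacency, collapsing the edge $u_1 \dash v_j$ of $\alpha_1$ yields $(A: G_j^{g_j}; G_1^{g_1}, \dots, G_l^{g_l h}, \dots, G_n^{g_n})$, which is equivalent to $A_1$ by Lemma \ref{lemma equivalent A labellings} (with global element $g=1$, inner coefficient $h \in G_j^{g_j}$ for the $l$-th factor, and $1$ for all others), so $\class{A_1} \dash \class{\alpha_1}$ is an edge of $\mathcal{S}_n$. For the volume comparison, the $k$-th $U\coset x$-spoke is unchanged for $k \ne l$, so only the $l$-th spoke matters. Writing $d_1 := |[U\coset x, U\coset x']|$ and $d_2 := |[U\coset x'', G_l \coset g_l]|$, the original $l$-th spoke has length $d_1 + d_2 + 2$, while the right action of $h$ is an isometry fixing $G_j \coset g_j$ and sending $U\coset x''$ to $U\coset x'$; hence it carries $[U\coset x'', G_l \coset g_l]$ onto a geodesic $[U\coset x', G_l \coset g_l h]$ of length $d_2$, and concatenating with $[U\coset x, U\coset x']$ produces a path of length $d_1 + d_2$ from $U\coset x$ to $G_l \coset g_l h$. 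Consequently the new $l$-th spoke has length at most $d_1 + d_2 < d_1 + d_2 + 2$, giving the required strict decrease $\vol{U\coset x}{\alpha_1} < \vol{U\coset x}{\alpha_0}$. The main obstacle I anticipate is precisely the geometric choice of $h$ that produces this tripod-style shortening; once $h$ is identified via the two $U$-neighbours of $G_j \coset g_j$ on the offending spoke, the algebraic and combinatorial verifications reduce to routine applications of Lemmas \ref{lemma equivalent A labellings}, \ref{lemma translation by two implies elliptic}, and \ref{lemma pure symmetric autos give S-free splittings}.
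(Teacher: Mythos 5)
Your proposal is correct and follows essentially the same route as the paper's proof: both invoke Lemma \ref{lemma spokes fold} to locate an intermediate $C$-vertex on an over-long spoke, take its two $U$-neighbours along that spoke, use Lemma \ref{lemma translation by two implies elliptic} to produce the twisting element (your $h=(x'')^{-1}x'$ is the paper's $z^{-1}y$), and verify the path via Lemmas \ref{lemma alphas collapse to As} and \ref{lemma equivalent A labellings}. The only cosmetic difference is that you bound the new spoke length above by $d_1+d_2$ rather than computing it exactly as $d_1+d_2-2l$ with $l$ the overlap, which still yields the required strict decrease.
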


\begin{proof}
Since $\vol{U\coset x}{\alpha_{0}}>n$ then by Lemma \ref{lemma spokes fold}, we have $[U\coset x, G_{i}\coset g_{i}, G_{j}\coset g_{j}]$ in $\tree{\funalpha}$ for some distinct $i,j\in\{1,\dots,n\}$.
Take $y$ and $z$ in $G$ such that $U\coset y$ and $U \coset z$ are adjacent to $G_{i}\coset g_{i}$ in $\tree{\funalpha}$ and satisfy $[U\coset x, U\coset y, G_{i}\coset g_{i}, U\coset z, G_{j}\coset g_{j}]$ (possibly with $U\coset y=U\coset x$, but never $U\coset y=U\coset z$).
Set $\alpha_{1}:=(\alpha:G_{1}^{g_{1}}, \dots, G_{j-1}^{g_{j-1}}, G_{j}^{g_{j}z^{-1}y}, G_{j+1}^{g_{j+1}}, \dots, G_{n}^{g_{n}})$ and $A_{1}:=(A:G_{i}^{g_{i}}; G_{1}^{g_{1}}, \dots, G_{j}^{g_{j}}, \dots, G_{n}^{g_{n}})$.
By Lemma \ref{lemma translation by two implies elliptic}, we have that $z^{-1}y\in G_{i}^{g_{i}}$, and so by Lemma \ref{lemma equivalent A labellings} we have $(A:G_{i}^{g_{i}}; G_{1}^{g_{1}}, \dots, G_{j}^{g_{j}}, \dots, G_{n}^{g_{n}})\simeq(A:G_{i}^{g_{i}}; G_{1}^{g_{1}}, \dots, G_{j}^{g_{j}z^{-1}y}, \dots, G_{n}^{g_{n}})$.
We now see that by Lemma \ref{lemma alphas collapse to As}, $\class{\alpha_{0}} \dash \class{A_{1}} \dash \class{\alpha_{1}}$ is a path in $\mathcal{S}_{n}$.

Observe that $\spoke{U\coset x}{\alpha_{0}}$ and $\spoke{U\coset x}{\alpha_{1}}$ differ only in the $j$\textsuperscript{th} $U\coset x$-spoke.
Let $V$ be the vertex in $\tree{\funalpha}$ satisfying $[U\coset y, U\coset x] \cap [U\coset y, G_{j}\coset g_{j}z^{-1}y] = [U\coset y, V]$.
Note that we may have $V\in\{ U\coset y, U\coset x, G_{j}\coset g_{j}z^{-1}y\}$ (or we may have $V\not\in\{ U\coset y, U\coset x, G_{j}\coset g_{j}z^{-1}y\}$).
For brevity, let $k:=|[U\coset z, G_{j}\coset g_{j}]|$, $l:=|[U\coset y, V]|$, and $m:=|[U\coset x, U\coset y]|$.
We illustrate this in Figure \ref{fig proof volume is decreasable}.

\begin{figure}[h]
\centering
\begin{tikzpicture}
\draw[blue, thick]	(-1.5,0.025)	--	(0,0.025)					;
\draw[black,thick]	(0,0)		--	(2,0)						;
\draw[Green,thick]	(2,0)		--	(5,0)						;
\draw[red,thick]	(-4,0)		--	(0,0)						;
\draw[Green,thick] (0,-0.025)	--	(-1.5,-0.025)	--	(-2.8,-0.75)	;
\filldraw	(-4,0)		circle	[radius=0.06cm]	;
\filldraw	(-1.5,0)	circle	[radius=0.06cm]	;
\filldraw	(0,0)		circle	[radius=0.06cm]	;
\filldraw	(1,0)		circle	[radius=0.06cm]	;
\filldraw	(2,0)		circle	[radius=0.06cm]	;
\filldraw	(5,0)		circle	[radius=0.06cm]	;
\filldraw	(-2.8,-0.75)	circle	[radius=0.06cm]	;
\node at (-4,0.25)		{\small{$U\coset x$}}			;
\node at (-1.5,0.25)	{\small{$V$}}				;
\node at (0,0.25)		{\small{$U\coset y$}}			;
\node at (1,0.25)		{\small{$G_{i}\coset g_{i}$}}		;
\node at (2,0.25)		{\small{$U\coset z$}}			;
\node at (5,0.25)		{\small{$G_{j}\coset g_{j}$}}		;
\node at (-3.75,-0.75)	{\small{$G_{j}\coset g_{j}z^{-1}y$}}	;
\node[red] at 	(-2,-0.15)		{\footnotesize{$m$}}	;
\node[blue] at 	(-0.75,-0.2)		{\footnotesize{$l$}}	;
\node[black] at 	(0.5,-0.2)		{\scriptsize{$1$}}		;
\node[black] at 	(1.5,-0.2)		{\scriptsize{$1$}}		;
\node[Green] at 	(3.5,-0.2)		{\footnotesize{$k$}}	;
\node[Green] at 	(-1.45,-0.225)	{\footnotesize{$k$}}	;
\end{tikzpicture}
\caption{$j$\textsuperscript{th} $U\coset x$-Spokes of $\alpha_{0}$ and $\alpha_{1}$ in $\tree{\funalpha}$}
\label{fig proof volume is decreasable}
\end{figure}

Note that $0\le l \le \min\{k,m\}$.
Since $G$ acts on $\tree{\funalpha}$ by isometries, then $|[U\coset y, G_{j}\coset g_{j}z^{-1}y]|=|[U\coset z, G_{j}\coset g_{j}]|=k$.
We see that $|[U\coset x, G_{j}\coset g_{j}]|= m+k+2$, while $|[U\coset x, G_{j}\coset g_{j}z^{-1}y]|=m+k-2l$.
Thus $\vol{U\coset x}{\alpha_{1}} = \vol{U\coset x}{\alpha_{0}}-2-2l \le \vol{U\coset x}{\alpha_{0}}-2 < \vol{U\coset x}{\alpha_{0}}$, as required.
\end{proof}

%\textcolor{Green}{main point of this section!}
Finally, we can prove our main result of this Section: that paths to $\class{\funalpha}$ always exist in $\mathcal{S}_{n}$.

\begin{prop}\label{prop alpha-A paths}
If  $\class{\alpha_{0}}\in\mathcal{S}_{n}^{(0)}$ is an $\alpha$-graph class then there exists a path in $\mathcal{S}_{n}$ of the form: $$\class{\alpha_{0}} \dash \class{A_{1}} \dash \class{\alpha_{1}} \dash \dots \dash \class{\alpha_{m-1}} \dash \class{A_{m}} \dash \class{\alpha_{m}}=\class{\funalpha}$$
%For any $\alpha$-graph class $\class{\alpha_{0}}$ in $\mathcal{S}_{n}^{(0)}$ there exists a path in $\mathcal{S}_{n}^{(1)}$ from $\class{\alpha_{0}}$ to $\class{\funalpha}$ which traverses only $\alpha$-graph classes and $A$-graph classes (i.e. a path of the form $\class{\alpha_{0}} \dash \class{A_{1}} \dash \class{\alpha_{1}} \dash \dots \dash \class{\alpha_{m-1}} \dash \class{A_{m}} \dash \class{\alpha_{m}}=\class{\funalpha}$).
\end{prop}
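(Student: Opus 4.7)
The plan is to proceed by (strong) induction on the quantity $\vol{U\coset x}{\alpha_0}$ for some fixed $x\in G$ and some fixed choice of representative $\alpha_0$ of $\class{\alpha_0}$. By Observation \ref{obs vol is integer at least n} this quantity is an integer bounded below by $n$, so the induction is well-founded.

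For the base case, $\vol{U\coset x}{\alpha_0}=n$, Lemma \ref{lemma only funalpha has vol n} immediately gives $\alpha_0\simeq\funalpha$, hence $\class{\alpha_0}=\class{\funalpha}$; the claim then holds (trivially) with $m=0$. For the inductive step, where $\vol{U\coset x}{\alpha_0}>n$, I would invoke Lemma \ref{lemma volume is decreasable} to produce $\class{A_1},\class{\alpha_1}\in\mathcal{S}_n^{(0)}$ such that $\class{\alpha_0}\dash\class{A_1}\dash\class{\alpha_1}$ is a path in $\mathcal{S}_n$, together with an explicit representative $\alpha_1$ of $\class{\alpha_1}$ satisfying $\vol{U\coset x}{\alpha_1}<\vol{U\coset x}{\alpha_0}$ (for the \emph{same} $x$). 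Applying the inductive hypothesis to the pair $(\class{\alpha_1},\alpha_1)$ with this same $x$ yields a path
$$\class{\alpha_1}\dash\class{A_2}\dash\class{\alpha_2}\dash\dots\dash\class{A_m}\dash\class{\alpha_m}=\class{\funalpha}$$
in $\mathcal{S}_n$, which I would concatenate with $\class{\alpha_0}\dash\class{A_1}\dash\class{\alpha_1}$ to obtain the required path.

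I expect no serious obstacle: all of the genuinely geometric work has already been done inside Lemma \ref{lemma volume is decreasable}, and the present proposition is really just a finite-descent packaging of that reduction. It is essential, however, that the same element $x\in G$ can be reused at every step of the iteration (so that the strictly decreasing integer sequence $\vol{U\coset x}{\alpha_0}>\vol{U\coset x}{\alpha_1}>\dots$ is actually decreasing in the natural numbers, forcing termination); happily this is already baked into the conclusion of Lemma \ref{lemma volume is decreasable}, and in particular into the fact that the lemma outputs a specific representative $\alpha_1$ rather than merely the class $\class{\alpha_1}$.
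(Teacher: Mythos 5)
Your proposal is correct and is essentially the paper's own argument: strong induction on the $U\coset x$-volume, with Lemma \ref{lemma only funalpha has vol n} handling the base case and Lemma \ref{lemma volume is decreasable} supplying the strictly decreasing step, followed by concatenation (the paper simply fixes $x=1$, i.e.\ works with $\vol{U}{\alpha_{0}}$). Your remark that the same $x$ must be reused throughout, and that this is guaranteed because the lemma outputs a specific representative $\alpha_{1}$, is exactly the point the paper relies on.
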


\begin{proof}
Let $\class{\alpha_{0}}$ be an arbitrary $\alpha$-graph class in $\mathcal{S}_{n}$ with representative $\alpha_{0}$.
We will prove by (strong) induction on $\vol{U}{\alpha_{0}}$ that there is some (finite) path in $\mathcal{S}_{n}$ from $\class{\alpha_{0}}$ to $\class{\funalpha}$.
Recall from Observation \ref{obs vol is integer at least n} that $\vol{U}{\alpha_{0}}$ is an integer with $\vol{U}{\alpha_{0}}\ge n$.

Suppose $\vol{U}{\alpha_{0}}=n$.
By Lemma \ref{lemma only funalpha has vol n}, we then have $\alpha_{0}\simeq\funalpha$.
Thus $\class{\alpha_{0}}=\class{\funalpha}$ is a path (of length 0) and we are done.

Fix $N\ge n$ and suppose that for all $\alpha$-graphs $\hat{\alpha}$ with $\vol{U}{\hat{\alpha}}\le N$ there is some path $p$ in $\mathcal{S}_{n}$ from $\class{\hat{\alpha}}$ to $\class{\funalpha}$. % comprising only $\alpha$-graph classes and $A$-graph classes.

Now suppose $\vol{U}{\alpha_{0}}=N+1$. Then $\vol{U}{\alpha_{0}}>N\ge n$, and by Lemma \ref{lemma volume is decreasable}, there exist $\class{A_{1}},\class{\alpha_{1}}\in\mathcal{S}_{n}^{(0)}$ so that $\class{\alpha_{0}} \dash \class{A_{1}} \dash \class{\alpha_{1}}$ is a path in $\mathcal{S}_{n}$ and $\vol{U}{\alpha_{1}}<\vol{U}{\alpha_{0}}=N+1$.
In particular, $\vol{U}{\alpha_{1}}\le N$, and so by hypothesis there is some path $p$ in $\mathcal{S}_{n}$ from $\class{\alpha_{1}}$ to $\class{\funalpha}$. % comprising only $\alpha$-graph classes and $A$-graph classes.
Thus by concatenating the paths $\class{\alpha_{0}} \dash \class{A_{1}} \dash \class{\alpha_{1}}$ and $p$, we have found a path in $\mathcal{S}_{n}$ from $\class{\alpha_{0}}$ to $\class{\funalpha}$. % which traverses only $\alpha$-graph classes and $A$-graph classes.

Since the statement holds for $\vol{U}{\alpha_{0}}=n$, and the statement holding for all $\vol{U}{\alpha_{0}}\le N$ (for some $N\ge n$) implies it holds for $\vol{U}{\alpha_{0}}=N+1$, we conclude that the statement holds for all possible values of $\vol{U}{\alpha_{0}}$.
Thus given an arbitrary $\alpha$-graph class $\class{\alpha_{0}}$, we can find a path in $\mathcal{S}_{n}$ from $\class{\alpha_{0}}$ to $\class{\funalpha}$.

That our path is of the form $\class{\alpha_{0}} \dash \class{A_{1}} \dash \class{\alpha_{1}} \dash \dots \dash \class{\alpha_{m-1}} \dash \class{A_{m}} \dash \class{\alpha_{m}}=\class{\funalpha}$ follows immediately by recalling from Observation \ref{obs Sn properties} that $\mathcal{S}_{n}$ is a bipartite graph between the set of $\alpha$-graph classes and the set of $A$-graph classes.
\end{proof}

\connected

\begin{proof}
Recall from Observation \ref{obs Sn properties} that every edge of $\mathcal{S}_{n}$ is of the form $\class{\alpha_{0}}\dash\class{A_{0}}$.
Moreover, from Observation \ref{obs fun dom for Sn} we have that every $A$-graph class $\class{A_{1}}$ is adjacent in $\mathcal{S}_{n}$ to some $\alpha$-graph class $\class{\alpha_{1}}$.
Thus by Proposition \ref{prop alpha-A paths}, given any point $P$ in $\mathcal{S}_{n}$, there is a path in $\mathcal{S}_{n}$ from $P$ to the vertex $\class{\funalpha}$.
Hence $\mathcal{S}_{n}$ is path connected.
\end{proof}

%	----------------------------------------------------------------------------------	Generators	-----------------------------------------------------------------------

\section{Generators for $\outs(G_{1}\ast\dots\ast G_{n})$}\label{section generators}

Let $G=G_{1}\ast\dots\ast G_{n}$ and $\mathfrak{S}=(G_{1},\dots,G_{n})$.
Recall that $\funalpha$ is the $\mathfrak{S}$-labelling $(\alpha:G_{1},\dots,G_{n})$ of the graph $\alpha$ in Figure \ref{fig alpha graph shape}
and $\funA{i}$ is the $\mathfrak{S}$-labelling $(A:G_{i};G_{1},\dots,G_{n})=(A:G_{i},G_{1},\dots,G_{i-1},G_{i+1},\dots,G_{n})$ of the graph $A$ in Figure \ref{fig A graph shape}.

The purpose of this Section (and the paper as a whole) is to prove Theorem \ref{main theorem}, that is, to show that any pure symmetric outer automorphism (Definition \ref{defn pure symmetric autos}) of the splitting $G_{1}\ast\dots\ast G_{n}$ is a product of factor outer automorphisms (Definiton \ref{defn factor autos}) and Whitehead outer automorphisms (Definition \ref{defn whitehead autos}) relative to $\mathfrak{S}$.
Equivalently, we will show that $\outs(G)$ is generated by factor outer automorphisms and Whitehead outer automorphisms relative to $\mathfrak{S}$. %, where $G=G_{1}\ast\dots\ast G_{n}$ and $\mathfrak{S}=(G_{1},\dots,G_{n})$.

We begin by determining the $\outs(G)$ stabilisers of the vertices $\class{\funalpha}$ and $\class{\funA{i}}$ in $\mathcal{S}_{n}$.

\begin{prop}\label{prop stab funalpha}
Let $\Phi\in\outs(G)$.
Then $\class{\funalpha}\cdot\Phi=\class{\funalpha}$ if and only if $\Phi$ is a factor outer automorphism.
\end{prop}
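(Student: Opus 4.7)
The plan is to handle the two implications separately. The forward direction is essentially immediate from Observation \ref{obs factor and inner autos with labellings}: if $\Phi$ has a factor representative $\varphi\in\auts(G)$, then $\varphi(G_i)=G_i$ for every $i$, so $\class{\funalpha}\cdot\Phi=\class{(\alpha:\varphi(G_1),\dots,\varphi(G_n))}=\class{(\alpha:G_1,\dots,G_n)}=\class{\funalpha}$. I would dispense with this case in one sentence.

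For the reverse direction, fix a representative $\phi\in\auts(G)$ of $\Phi$. Since $\phi$ is pure symmetric, there exist $h_1,\dots,h_n\in G$ with $\phi(G_i)=G_i^{h_i}$, and the hypothesis $\class{\funalpha}\cdot\Phi=\class{\funalpha}$ gives the equivalence of $\mathfrak{S}$-labellings
\[
(\alpha:G_1^{h_1},\dots,G_n^{h_n})\simeq(\alpha:G_1,\dots,G_n).
\]
By Lemma \ref{lemma equivalent alpha labellings} (after using Observation \ref{obs alpha graph autos} to absorb any permutation of the $v_j$'s into the listing) there exists a single element $g\in G$ such that $G_i=(G_i^{h_i})^{g}$ for every $i\in\{1,\dots,n\}$. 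This is the crucial step — the content of Lemma \ref{lemma equivalent alpha labellings} is precisely that the conjugating element at every non-trivial vertex of $\alpha$ can be chosen to be the same $g$, because the unique trivial vertex $u_1$ forces the $h_w$'s in the definition of $\simeq$ to agree along every edge.

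Next I would translate the equality $G_i=G_i^{h_i g}$ into information about $h_i$ using the malnormality of free factors (the same observation used throughout Section \ref{section connectedness}, cf.\ Observation \ref{obs malnormalcy and coset equivalence}): $h_i g$ normalises $G_i$, hence $h_i g\in G_i$, so $h_i=g_i g^{-1}$ for some $g_i\in G_i$. Therefore
\[
\phi(G_i)=G_i^{h_i}=G_i^{g_i g^{-1}}=G_i^{g^{-1}}
\]
for every $i$ (with the \emph{same} conjugator $g^{-1}$).

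Finally I would produce the desired factor representative. Let $\iota_g\in\inn(G)$ be the inner automorphism $x\mapsto x^g=g^{-1}xg$ and set $\varphi:=\iota_g\circ\phi$. Then $\varphi(G_i)=g^{-1}(gG_ig^{-1})g=G_i$ for every $i$, and since $\varphi$ is a group automorphism of $G$ sending each $G_i$ bijectively to itself, $\varphi|_{G_i}\in\aut(G_i)$. Thus $\varphi$ is a factor automorphism relative to $\mathfrak{S}$ in the sense of Definition \ref{defn factor autos}, and $[\varphi]=[\phi]=\Phi$ because $\iota_g\in\inn(G)$. Hence $\Phi$ is a factor outer automorphism, completing the proof. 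The main (minor) obstacle is isolating the "same conjugator" phenomenon in step three; once Lemma \ref{lemma equivalent alpha labellings} hands it over, the rest is bookkeeping with inner automorphisms.
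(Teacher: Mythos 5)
Your proposal is correct and follows essentially the same route as the paper: the forward direction via Observation \ref{obs factor and inner autos with labellings}, and the reverse direction by invoking Lemma \ref{lemma equivalent alpha labellings} to extract a single conjugator $g$ and then composing with an inner automorphism to obtain a factor representative. (Your malnormality detour is harmless but unnecessary --- from $G_i=G_i^{h_i g}$ one gets $\phi(G_i)=G_i^{h_i}=(G_i^{h_i g})^{g^{-1}}=G_i^{g^{-1}}$ directly.)
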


\begin{proof}
If $\Phi\in\outs(G)$ is a factor outer automorphism,
then there exists $\varphi\in\Phi$ which is a factor automorphism
and by Observation \ref{obs factor and inner autos with labellings}, we have:
\begin{align*}
\class{\funalpha}\cdot\Phi	&	=	\class{(\alpha:G_{1},\dots,G_{n})}\cdot\Phi	\\
				&	=	\class{(\alpha:\varphi(G_{1}),\dots,\varphi(G_{n}))}	\\
				&	=	\class{(\alpha:G_{1},\dots,G_{n})}	\\
				&	=	\class{\funalpha}
\end{align*}

Now suppose $\Phi\in\outs(G)$ is such that $\class{\funalpha}\cdot\Phi=\class{\funalpha}$. Then for any $\varphi\in\Phi$, we have $(\alpha:\varphi(G_{1}),\dots,\varphi(G_{n}))\simeq(\alpha:G_{1},\dots,G_{n})$.
Fix $\varphi\in\Phi$.
By Lemma \ref{lemma equivalent alpha labellings} we deduce that there exists some $g\in G$ so that for each $i\in\{1,\dots,n\}$, $\varphi(G_{i})=G_{i}^{g}$.
Let $\iota_{g^{-1}}\in\aut(G)$ be the inner automorphism $x\mapsto gxg^{-1}$ for all $x\in G$.
Then $\iota_{g^{-1}}\varphi\in\Phi$ and for each $i\in\{1,\dots,n\}$, we have $\iota_{g^{-1}}(\varphi(G_{i}))=\iota_{g^{-1}}(G_{i}^{g})=G_{i}$.
Letting $\varphi_{i}$ be the map $\iota_{g^{-1}}\varphi$ with domain restricted to the subgroup $G_{i}$, we have that $\varphi_{i}\in\aut(G_{i})$.
Hence $(\varphi_{1},\dots,\varphi_{n})\in\factorautos$ so $\iota_{g}\varphi$ is a factor automorphism, and thus $\Phi$ is a factor outer automorphism.
\end{proof}

\begin{prop}\label{prop stab funAi}
Let $\Psi\in\outs(G)$.
Then $\class{\funA{i}}\cdot\Psi=\class{\funA{i}}$ if and only if $\Psi$ can be written as a product of factor outer automorphisms and Whitehead outer automorphisms with operating factor $G_{i}$.
\end{prop}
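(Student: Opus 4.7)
The plan is to prove the two implications separately. The forward direction is a direct check on generators; the reverse direction requires constructing an explicit factorization of a chosen representative.

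For the forward direction, I will verify that each generator fixes $\class{\funA{i}}$. A factor automorphism $\varphi$ satisfies $\varphi(G_j)=G_j$ for every $j$, so by Observation \ref{obs factor and inner autos with labellings} the $\mathfrak{S}$-labelling is preserved on the nose. A Whitehead automorphism $(Y,x)$ with $x\in G_i$ fixes $G_i$ pointwise and sends each other $G_j$ to either $G_j$ or $G_j^{x}$; since $x\in G_i$, Lemma \ref{lemma equivalent A labellings} (with $g=1$ and each $g_j\in\{1,x\}\subseteq G_i$) produces the required equivalence with $\funA{i}$.

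For the reverse direction, I choose any representative $\psi\in\Psi\cap\auts(G)$. The hypothesis together with Lemma \ref{lemma equivalent A labellings} yields $g\in G$ and $g_1,\dots,g_n\in G_i$ (taking $g_i=1$) such that $\psi(G_i)=G_i^{g}$ and $\psi(G_j)=G_j^{g_j g}$ for $j\ne i$. Replacing $\psi$ by $\tilde\psi:=\iota_g^{-1}\psi$, which is another representative of $\Psi$, I may assume $\tilde\psi(G_i)=G_i$ and $\tilde\psi(G_j)=G_j^{g_j}$ with $g_j\in G_i$ for each $j\ne i$.

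The heart of the argument is to factor $\tilde\psi$ as a product of Whitehead automorphisms with operating factor $G_i$ and a factor automorphism. For each $j\ne i$ with $g_j\ne 1$, let $\omega_j:=(G_j,g_j)$ be the Whitehead automorphism with operating factor $G_i$. A short check shows the $\omega_j$ pairwise commute: each $g_k\in G_i$ is fixed pointwise by every $\omega_j$ (since the operating factor is $G_i$), and $\omega_j$ restricts to the identity on every factor other than $G_j$, so the composition of any two acts on each factor in a way independent of the order. Hence $\omega:=\prod_{j\ne i}\omega_j$ is well-defined with $\omega(G_i)=G_i$ and $\omega(G_j)=G_j^{g_j}$. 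Setting $\varphi:=\omega^{-1}\tilde\psi$, a direct computation on each factor shows $\varphi(G_j)=G_j$: on $G_i$ both maps act as automorphisms of $G_i$, while on $G_j$ one writes $\tilde\psi(h)=g_j^{-1}\tilde\psi_j(h)g_j$ for an isomorphism $\tilde\psi_j\colon G_j\to G_j$, and then $\omega^{-1}$, which fixes $G_i$ pointwise and conjugates $G_j$-elements by $g_j^{-1}$, cancels the outer $g_j$ factors to return $\tilde\psi_j(h)\in G_j$. Thus $\varphi$ is a factor automorphism, and passing to outer classes yields $\Psi=\prod_{j\ne i}[\omega_j]\cdot[\varphi]$, the required decomposition. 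The main obstacle is the bookkeeping in the commutation and cancellation computations, but once the $\omega_j$ are defined carefully everything reduces to routine manipulations in the free product.
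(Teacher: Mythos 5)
Your proof is correct and takes essentially the same route as the paper: the forward direction checks each generator against Observation \ref{obs factor and inner autos with labellings} and Lemma \ref{lemma equivalent A labellings}, and the reverse direction normalises a representative by an inner automorphism so that $\psi(G_{j})=G_{j}^{g_{j}}$ with $g_{j}\in G_{i}$, then factors it into Whitehead automorphisms $(G_{j},g_{j})$ with operating factor $G_{i}$ and factor automorphisms. The only differences are cosmetic --- you collect the Whitehead pieces into a single product $\omega$ and extract one factor automorphism $\varphi=\omega^{-1}\tilde\psi$, where the paper interleaves terms $(G_{j},g_{j})\varphi_{j}$ factor by factor --- and your forward direction, which treats a general $(Y,x)$ rather than only the singleton case $(G_{j},x)$, is if anything slightly more complete than the paper's.
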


\begin{proof}
If $\Phi\in\outs(G)$ is a factor outer automorphism,
then there exists $\varphi\in\Phi$ which is a factor automorphism
and by Observation \ref{obs factor and inner autos with labellings}, we have:
\begin{align*}
\class{\funA{i}}\cdot\Phi	&	=	\class{(A:G_{i};G_{1},\dots,G_{n})}\cdot\Phi	\\
				&	=	\class{(A:\varphi(G_{i});\varphi(G_{1}),\dots,\varphi(G_{n}))}	\\
				&	=	\class{(A:G_{i};G_{1},\dots,G_{n})}	\\
				&	=	\class{\funA{i}}
\end{align*}
If $\Omega\in\outs(G)$ is a Whitehead outer automorphism with operating factor $G_{i}$,
then there is a (unique) $\omega\in\Omega$ which is a Whitehead automorphism with operating factor $G_{i}$.
Now for some $j\in\{1,\dots,n\}-\{i\}$ and some $x\in G_{i}$, we have $\omega=(G_{j},x)$.
Setting $g_{j}=x$ and $g_{k}=1$ for $k\in\{1,\dots,n\}-\{j\}$, we have that for each $k\in\{1,\dots,n\}$, $\omega(G_{k})=G_{k}^{g_{k}}$.
Note that $g_{1},\dots,g_{n}\in G_{i}$, so by Lemma \ref{lemma equivalent A labellings}, we have:
\begin{align*}
\class{\funA{i}}\cdot\Omega	&	=	\class{(A:\omega(G_{i});\omega(G_{1}),\dots,\omega(G_{n}))}	\\
					&	=	\class{(A:G_{i}^{g_{i}};G_{1}^{g_{1}},\dots,G_{n}^{g_{n}})}	\\
					&	=	\class{(A:G_{i};G_{1},\dots,G_{n})}	\\
					&	=	\class{\funA{i}}
\end{align*}
Therefore any factor outer automorphism or Whitehead outer automorphism with operating factor $G_{i}$ stabilises $\class{\funA{i}}$, and hence so too must any product of these.

\smallskip
Now suppose $\Psi\in\outs(G)$ is such that $\class{\funA{i}}\cdot\Psi=\class{\funA{i}}$.
Then for all $\psi\in\Psi$ we have $(A:\psi(G_{i});\psi(G_{1}),\dots,\psi(G_{n})) \simeq (A:G_{i};G_{1},\dots,G_{n}) = \funA{i}$.
By Lemma \ref{lemma equivalent A labellings} and Observation \ref{obs factor and inner autos with labellings} there is some $\psi\in\Psi$ such that there exist $g_{1},\dots,g_{n}\in G_{i}$ with $\psi(G_{j})=G_{j}^{g_{j}}$ for all $j\in\{1,\dots,n\}$.

Fix $j\in\{1,\dots,n\}$. Since $\psi(G_{j})=G_{j}^{g_{j}}$, then for each $x\in G_{j}$ there exists $y_{x}\in G_{j}^{g_{j}}$ such that $\psi(x)=y_{x}$
(and for each $z\in G_{j}^{g_{j}}$ there exists $x\in G_{j}$ with $z=\psi(x)$).
Define a map $\varphi_{j}$ with domain $G_{j}$ by $x\mapsto g_{j}y_{x}g_{j}^{-1}$ for all $x\in G_{j}$.
Note that since $y_{x}\in G_{j}^{g_{j}}$ then $g_{j}y_{x}g_{j}^{-1}\in G_{j}$.
Thus $\varphi_{j}\in\aut(G_{j})$.
Now $\psi(x)=g_{j}^{-1}\varphi_{j}(x)g_{j}=(G_{j},g_{j})(\varphi_{j}(x))$ where $(G_{j},g_{j})$ is a Whitehead automorphism with operating factor $G_{i}$ for all $j\ne i$ and $(G_{i},g_{i})\in\aut(G_{i})$ is the inner automorphism of $G_{i}$ which conjugates each element of $G_{i}$ by $g_{i}$.
Identifying $\varphi_{j}$ (and $(G_{i},g_{i})$ when $j=i$) with its image in $\factorautos$ under the natural inclusion $\aut(G_{j})\hookrightarrow\factorautos$,
we note that if $z\in G_{k}$ for any $k\ne j$ then $(G_{j},g_{j})(z)=z=\varphi_{j}(z)$.

Since this applies for each $j\in\{1,\dots,n\}$, we conclude that $\psi=(G_{1},g_{1})\varphi_{1}\dots(G_{n},g_{n})\varphi_{n}$ is a product of factor automorphisms and Whitehead automorphisms.
Thus $\Psi=[\psi]=[(G_{1},g_{1})\varphi_{1}\dots(G_{n},g_{n})\varphi_{n}]=[(G_{1},g_{1})][\varphi_{1}]\dots[(G_{n},g_{n})][\varphi_{n}]$ is a product of factor outer automorphisms and Whitehead outer automorphisms.
\end{proof}

In Proposition \ref{prop alpha-A paths}, we established the existence of paths in $\mathcal{S}_{n}$ from arbitrary $\alpha$-graph classes to $\class{\funalpha}$.
We will now show how these paths can be used to generate (outer) automorphisms.

\begin{prop}\label{prop inductive step for generators}
Let $\class{\funalpha}=\class{\alpha_{0}}\dash\class{A_{1}}\dash\class{\alpha_{1}}\dash\class{A_{2}}\dash\dots\dash\class{A_{m}}\dash\class{\alpha_{m}}$ be a path in $\mathcal{S}_{n}$. Then there exists $\Psi\in\outs(G)$ with $\class{\alpha_{m}}=\class{\funalpha}\cdot\Psi$ such that $\Psi$ is a product of factor outer automorphisms and Whitehead outer automorphisms.
\end{prop}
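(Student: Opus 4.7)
The plan is to prove this by induction on $m$, the number of $A$-graph classes along the path. The base case $m=0$ is immediate: $\class{\alpha_m}=\class{\alpha_0}=\class{\funalpha}$, so take $\Psi$ to be the identity.

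For the inductive step, suppose the result holds for paths of the given form with $m-1$ intermediate $A$-graph classes, and consider a path of length $2m$ starting at $\class{\funalpha}$. By the inductive hypothesis applied to the truncated path $\class{\funalpha}=\class{\alpha_0}\dash\class{A_1}\dash\dots\dash\class{A_{m-1}}\dash\class{\alpha_{m-1}}$, there is some $\Psi'\in\outs(G)$ which is a product of factor outer automorphisms and Whitehead outer automorphisms such that $\class{\alpha_{m-1}}=\class{\funalpha}\cdot\Psi'$. It then suffices to exhibit some $\Theta\in\outs(G)$ with $\class{\alpha_m}=\class{\alpha_{m-1}}\cdot\Theta$ which is itself a product of factor and Whitehead outer automorphisms, for then $\Psi:=\Psi'\Theta$ will satisfy the required properties (using that the set of factor and Whitehead outer automorphisms is closed under inverses, so any product of such is an element of the subgroup they generate).

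Lemma \ref{lemma alphas differ by psi in stab A} applied to the subpath $\class{\alpha_{m-1}}\dash\class{A_m}\dash\class{\alpha_m}$ produces $\Theta\in\outs(G)$ with $\class{A_m}\cdot\Theta=\class{A_m}$ and $\class{\alpha_m}=\class{\alpha_{m-1}}\cdot\Theta$. The main obstacle is that $\Theta$ stabilises $\class{A_m}$, not $\class{\funA{i}}$, so Proposition \ref{prop stab funAi} does not immediately apply. To bridge this gap, observe that $\class{A_m}$ is a collapse of $\class{\alpha_{m-1}}=\class{\funalpha}\cdot\Psi'$. Since the $\outs(G)$-action preserves the collapse relation (Observation \ref{obs action preserves adjacency}), and the collapses of $\class{\funalpha}$ are exactly $\class{\funA{1}},\dots,\class{\funA{n}}$ (Observation \ref{obs collapses of funalpha}), we must have $\class{A_m}=\class{\funA{i}}\cdot\Psi'$ for some $i\in\{1,\dots,n\}$.

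The conjugate $\Psi'\Theta(\Psi')^{-1}$ then stabilises $\class{\funA{i}}$, since
\[ \class{\funA{i}}\cdot\Psi'\Theta(\Psi')^{-1}=\class{A_m}\cdot\Theta(\Psi')^{-1}=\class{A_m}\cdot(\Psi')^{-1}=\class{\funA{i}}. \]
By Proposition \ref{prop stab funAi}, $\Psi'\Theta(\Psi')^{-1}$ is a product $\Theta'$ of factor outer automorphisms and Whitehead outer automorphisms (the latter with operating factor $G_i$). Hence $\Theta=(\Psi')^{-1}\Theta'\Psi'$ is itself such a product, and so
\[ \Psi:=\Psi'\Theta=\Theta'\Psi' \]
is a product of factor outer automorphisms and Whitehead outer automorphisms with $\class{\alpha_m}=\class{\funalpha}\cdot\Psi$, completing the induction.
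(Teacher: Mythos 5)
Your proof is correct and follows essentially the same route as the paper: induction on $m$, identifying $\class{A_m}$ as $\class{\funA{i}}\cdot\Psi'$ via Observations \ref{obs action preserves adjacency} and \ref{obs collapses of funalpha}, and then reducing to the stabiliser computations of Propositions \ref{prop stab funalpha} and \ref{prop stab funAi}. The only (cosmetic) difference is that the paper translates the final two-edge segment back to the basepoint by $\Phi^{-1}$ before invoking Lemma \ref{lemma alphas differ by psi in stab A}, whereas you invoke the lemma in place and then conjugate the resulting stabilising element by $\Psi'$ --- these yield the same element, and your closing identity $\Psi'\Theta=\Theta'\Psi'$ correctly avoids any real reliance on closure under inverses.
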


\begin{proof}
We prove the statement of the Lemma by induction on $m$.

First, suppose $m=0$. Then our path consists of a single point $\class{\funalpha}=\class{\alpha_{0}}$, and by Proposition \ref{prop stab funalpha}, any factor outer automorphism $\Psi$ will satisfy $\class{\funalpha}\cdot\Psi=\class{\funalpha}=\class{\alpha_{0}}$. In particular, the outer automorphism class of the identity automorphism is such a factor outer automorphism.

Now let $N>0$ and suppose the statement holds for all such paths with $m=N$.
We will show that the statement must also hold for all such paths with $m=N+1$.

Consider an arbitrary path $$\class{\funalpha}=\class{\alpha_{0}}\dash\class{A_{1}}\dash\class{\alpha_{1}}\dash\class{A_{2}}\dash\dots\dash\class{A_{N}}\dash\class{\alpha_{N}}\dash\class{A_{N+1}}\dash\class{\alpha_{N+1}}$$ in $\mathcal{S}_{n}$.
By hypothesis, there is some $\Phi\in\outs(G)$ with $\class{\alpha_{N}}=\class{\funalpha}\cdot\Phi$ such that $\Phi$ is a product of factor outer automorphisms and Whitehead outer automorphisms.
Since $\class{\alpha_{N}}\dash\class{A_{N+1}}\dash\class{\alpha_{N+1}}$ is a path in $\mathcal{S}_{n}$, then by Observation \ref{obs action preserves adjacency} so too is $\class{\funalpha}=\class{\alpha_{N}}\cdot\Phi^{-1}\dash\class{A_{N+1}}\cdot\Phi^{-1}\dash\class{\alpha_{N+1}}\cdot\Phi^{-1}$.
By Observation \ref{obs collapses of funalpha} we deduce that $\class{A_{N+1}}\cdot\Phi^{-1}=\class{\funA{i}}$ for some $i\in\{1,\dots,n\}$.
Now by Lemma \ref{lemma alphas differ by psi in stab A} there exists $\Omega\in\outs(G)$ with $\class{\alpha_{N+1}}\cdot\Phi^{-1}=\class{\funalpha}\cdot\Omega$ and $\class{A_{N+1}}\cdot\Omega=\class{A_{N+1}}$, moreover from Proposition \ref{prop stab funAi} we see that $\Omega$ is a product of factor outer automorphisms and Whitehead outer automorphisms.
Hence $\class{\alpha_{N+1}}=\class{\funalpha}\cdot\Omega\Phi$ where $\Omega\Phi\in\outs(G)$ is a product of factor outer automorphisms and Whitehead outer automorphisms, as required.
\end{proof}

%\textcolor{Green}{main theorem!}

%\begin{thm}\label{main theorem}
%Let $G=G_{1}\ast\dots\ast G_{n}$ be a group which splits as a free product of $n\ge3$ non-trivial factors, and let $\mathfrak{S}=(G_{1},\dots,G_{n})$.
%Then any pure symmetric outer automorphism $\Psi\in\outs(G)$ can be written as a product of factor outer automorphisms and Whitehead outer automorphisms.
%\end{thm}
\maintheorem
%\edit{Can this be proved by induction?}

\begin{proof}
Let $\Psi\in\outs(G)$ and set $\class{\alpha_{0}}:=\class{\funalpha}\cdot\Psi\in\mathcal{S}_{n}$.
By Proposition \ref{prop alpha-A paths}, there exists some edge path 
$$\class{\alpha_{0}} \dash \class{A_{1}} \dash \class{\alpha_{1}} \dash \class{A_{2}} \dash \dots \dash \class{A_{m}} \dash \class{\alpha_{m}}=\class{\funalpha}$$
in $\mathcal{S}_{n}$ (where each $\class{\alpha_{i}}$ is an $\alpha$-graph class and each $\class{A_{i}}$ is an $A$-graph class).
Then by Proposition \ref{prop inductive step for generators} there exists $\Theta\in\outs(G)$ with $\class{\alpha_{0}}=\class{\funalpha}\cdot\Theta$ such that $\Theta$ is a product of factor outer automorphisms and Whitehead outer automorphisms.
Now $\class{\funalpha}\cdot\Psi\Theta^{-1}=\class{\funalpha}$ so by Proposition \ref{prop stab funalpha} there is some factor outer automorphism $\Phi\in\outs(G)$ with $\Psi\Theta^{-1}=\Phi$.
Hence $\Psi=\Phi\Theta$ is a product of factor outer automorphisms and Whitehead outer automorphisms.
\end{proof}

\begin{cor}
If $G_{1}\ast\dots\ast G_{n}$ is a Grushko decomposition for $G$ and moreover the factor groups $G_{i}$ are pairwise non-isomorphic, then $\out(G)$ is generated by factor outer automorphisms and Whitehead outer automorphisms.
\end{cor}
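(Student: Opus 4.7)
The plan is to derive this corollary as a direct consequence of the Main Theorem together with Observation \ref{obs Grushko decomposition}. The heart of the argument is recognising that the Grushko hypothesis, combined with pairwise non-isomorphism, allows us to identify $\out(G)$ with $\outs(G)$, at which point the conclusion follows from what has already been proved.

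More precisely, I would first unpack the Grushko hypothesis: writing $G_{1}\ast\dots\ast G_{n}$ as a Grushko decomposition means each $G_{i}$ is non-trivial, freely indecomposable, and not infinite cyclic (otherwise such a factor would be absorbed into a free part). This is exactly the setup of Observation \ref{obs Grushko decomposition}. The pairwise non-isomorphism assumption then ensures that any automorphism of $G$ must permute the conjugacy classes of the $G_{i}$ trivially (up to conjugation), so by the uniqueness part of the Grushko Decomposition Theorem every $\psi\in\aut(G)$ sends each $G_{i}$ to a conjugate of itself. Hence $\aut(G)=\auts(G)$, and quotienting by $\inn(G)$ gives $\out(G)=\outs(G)$.

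With this identification in place, I would simply invoke Theorem \ref{main theorem}: every element of $\outs(G)$ can be written as a product of factor outer automorphisms relative to $\mathfrak{S}=(G_{1},\dots,G_{n})$ and Whitehead outer automorphisms relative to $\mathfrak{S}$. Since $\out(G)=\outs(G)$, the same generating set works for all of $\out(G)$, which is what the corollary asserts.

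There is no genuine obstacle here; the entire content lies in verifying that the hypothesis of the corollary puts us in the regime where the Main Theorem's conclusion about $\outs(G)$ translates to a conclusion about $\out(G)$. The only thing to be careful about is explicitly citing Observation \ref{obs Grushko decomposition} for the equality $\out(G)=\outs(G)$, rather than leaving it implicit.
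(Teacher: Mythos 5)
Your proposal is correct and follows exactly the same route as the paper: the paper's proof is precisely to invoke Theorem \ref{main theorem} in light of Observation \ref{obs Grushko decomposition}, which gives $\out(G)=\outs(G)$ under the stated hypotheses. Your version simply spells out the details that the paper leaves implicit.
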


\begin{proof}
This follows immediately by considering Theorem \ref{main theorem} in the context of Observation \ref{obs Grushko decomposition}.
\end{proof}

%	---------------------------------------------------------------------------------	Bibliography	---------------------------------------------------------------------

\bibliography{references}
\bibliographystyle{abbrv}

%\errors %uncomment this to check for badboxes

\end{document}